\newcommand\reallywidehat[1]{%
\savestack{\tmpbox}{\stretchto{%
  \scaleto{%
    \scalerel*[\widthof{\ensuremath{#1}}]{\kern-.6pt\bigwedge\kern-.6pt}%
    {\rule[-\textheight/2]{1ex}{\textheight}}
  }{\textheight}%
}{0.5ex}}%
\stackon[1pt]{#1}{\tmpbox}%
}
\newtheorem{theorem}[equation]{Theorem}
\newtheorem{lemma}[equation]{Lemma}
\newtheorem{proposition}[equation]{Proposition}
\newtheorem{corollary}[equation]{Corollary}
\newtheorem{notation}[equation]{Notation}
\theoremstyle{definition}
\newtheorem{definition}[equation]{Definition}
\theoremstyle{remark}
\newtheorem{example}[equation]{Example}
\newtheorem{remark}[equation]{Remark}
\numberwithin{equation}{section}
\newcommand{\osf}{{\normalfont \textsf{X}}}
\newcommand{\lang}{\CL_{\osf}}
\newcommand{\ualgshift}{\TCA_R(\osf)}
\newcommand{\udalgshift}{\TCD_R(\osf)}
\newcommand{\ucalgshift}{\TCO_{\osf}}
\newcommand{\alf}{\mathscr{A}}
\newcommand{\N}{\mathbb{N}}
\newcommand{\F}{\mathbb{F}}
\newcommand{\CA}{\mathcal{A}}
\newcommand{\CD}{\mathcal{D}}
\newcommand{\CL}{\mathcal{L}}
\newcommand{\CO}{\mathcal{O}}
\newcommand{\TCA}{\widetilde{\CA}}
\newcommand{\TCB}{\mathcal{U}}
\newcommand{\TCD}{\widetilde{\CD}}
\newcommand{\TCO}{\widetilde{\CO}}
\newcommand{\HTCB}{\widehat{\TCB}}
\newcommand{\tauh}{\widehat{\tau}}
\newcommand{\varphih}{\widehat{\varphi}}
\newcommand{\nn}{\mathbb{N}}
\newcommand{\scj}{\subseteq}
\newcommand{\eword}{\omega}
\newcommand{\Lc}{\operatorname{Lc}}
\newcommand{\Orb}{\operatorname{Orb}}
\title[Partial group algebras with relations]{The dynamical structure of partial group algebras with relations, with applications to subshift algebras}
\author[G. Boava]{Giuliano Boava}
\author[G.G. de Castro]{Gilles G. de Castro}
\author[D. Gonçalves]{Daniel Gonçalves}
\address[Giuliano Boava, Gilles G. de Castro and Daniel Gonçalves]{Departamento de Matem\'atica, Universidade Federal de Santa Catarina, 88040-970 Florian\'opolis SC, Brazil. }
\email{g.boava@ufsc.br \\ gilles.castro@ufsc.br \\ daemig@gmail.com}
\author[D.W. van Wyk]{Daniel W. van Wyk}
\address[Danieldvanwyk@fairfield.edu]{Department of Mathematics, Fairfield University, Fairfield, CT 06824 USA.}
\email{dvanwyk@fairfield.edu }
\keywords{Partial group algebras, relations, partial skew group rings, partial actions, subshifts}
\subjclass[2020]{Primary: 16S35. Secondary: 16W22, 37B10, 37B05, 22A22, 46L55}
\begin{document}

\begin{abstract}

We introduce partial group algebras with relations in a purely algebraic framework. Given a group and a set of relations, we define an algebraic partial action and prove that the resulting partial skew group ring is isomorphic to the associated partial group algebra with relations. Under suitable conditions - which always holds if the base ring is a field - we demonstrate that the partial skew group ring can also be described using a topological partial action. Furthermore, we show how subshift algebras can be realized as partial group algebras with relations. Using the topological partial action, we describe simplicity of subshift algebras in terms of the underlying dynamics of the subshift.
\end{abstract}

\maketitle

\section{Introduction}

The realization of C*-algebras generated by partial isometries and projections as partial group algebras with relations—and consequently as partial crossed products—has proven to be a powerful tool in the study of C*-algebras and dynamical systems. Such realizations introduce standard gradings in these algebras, provide criteria for simplicity, and offer an intrinsic description of the dynamics encoded in the algebras (see \cite{ExelLacaQuigg, Royer}). For instance, this technique has been employed to study Cuntz-Krieger algebras for arbitrary infinite matrices (see \cite{ExelLacaQuigg}) and Cuntz-Li algebras (see \cite{BoavaExel}). 

The interplay between algebraic and analytical methods in capturing analytic and dynamical aspects in the noncommutative world has been explored with great success, for instance, in advances in problems such as the Williams problem (see \cite{COS, CGGH, MR4808570}). If one searches the literature for purely algebraic partial group algebras, one is typically directed to C*-algebraic papers or to \cite{MEP}. For C*-algebras, partial  group algebras with relations are studied in \cite{ExelLacaQuigg}. In \cite{MEP}, however,  partial group algebras are defined purely algebraically but without relations. In this paper, we study partial  group algebras with relations, aiming to address this gap in the purely algebraic setting. This generalizes the definition of partial group algebras given in \cite{MEP, MR3818287} and provides an algebraic analogue to the construction in \cite{ExelLacaQuigg} for C*-algebras.

In our approach, we define the partial group algebra with relations as the universal algebra with respect to partial representations of the group that adheres to the given set of relations. A pivotal aspect of our defined algebras is their realization as partial skew group rings. This is accomplished in a purely algebraic manner in Section~2. Specifically, in Proposition~\ref{quefome}, we show that a partial group algebra with relations can be realized as a partial skew group ring of an action on a commutative algebra.

Aiming for a topological description of the partial skew ring above, in Section~3, we use Keimel’s result (\cite[Theorem 1]{Keimel}) to prove that a universal algebra generated by commuting idempotents is isomorphic to the algebra of locally constant functions of a Stone space (Proposition~\ref{almoco}). We also establish results that allow us to view the commutative algebra in the partial skew group ring of Proposition~\ref{quefome} as the algebra of locally constant functions over a certain topological space (Proposition~\ref{quotient.not.field}). 

In Section~4, we describe a partial action on the aforementioned topological space and demonstrate that the induced algebraic partial action is equivariant to the algebraic partial action of Section~2. This provides a description of a partial group algebra with relations as a partial skew group ring arising from a topological partial action (see Theorem~\ref{grevedosalunos}).

In Section~5, we apply our results to subshift algebras. Subshift algebras are defined in both the algebraic and analytic settings, see \cite{BCGWCStarSubshift, BCGWSubshift, BrixCarlsen,CarlsenSubshift}, and they encode the dynamics of subshifts, even over infinite alphabets. Further properties of these algebras and their connections with infinite alphabet subshifts have been studied in \cite{MR4721165, DR19, Gonçalves_Royer_2024}. However, in the case of infinite alphabets, simplicity criteria for such algebras remain unresolved. Using our techniques, we address simplicity. First, we demonstrate that a subshift algebra can be viewed as a partial group algebra with relations (Theorem~\ref{clouds}). Consequently, we obtain a topological partial action such that the associated partial skew group ring is isomorphic to the subshift algebra. We show that this topological action is equivalent to the one found in \cite{BCGWSubshift}, and with this new description we characterize minimality of the action in terms of properties of the subshift. Together with the description of topological freeness given in \cite{BCGWSubshift}, this allows us to provide complete criteria for simplicity of subshift algebras (Theorem~\ref{simple}). Furthermore, since the partial actions and their transformation groupoids used in both the analytical and algebraic contexts are the same, our simplicity criteria also applies to C*-algebras of subshifts (Theorem~\ref{simplecstar}). Finally, we note that, in the finite alphabet case, our conditions can be shown to be equivalent to the criteria established in \cite{MishaRuy} for C*-algebras associated with subshifts over finite alphabets.

\section{Partial Group Algebras}\label{greve}

In this section, we revisit the definition of a partial representation and a partial group algebra, as outlined in \cite{DokEx05, MEP}. Following this, we introduce a new variant of the partial group algebra by incorporating additional relations among certain generators, as discussed in the C*-algebraic context in \cite{ExelLacaQuigg}. The main result of this section is a characterization of the partial group algebra with these relations as a partial skew group ring.

Throughout this section, \( R \) denotes a nonzero unital commutative ring. 

\begin{definition}(\cite[Definition~6.1]{DokEx05})
 \label{partialrep}
Let $G$ be a group. A \textit{partial representation of $G$ in a unital $R$-algebra $B$} is a map $\pi:G\to B$ such that
\begin{enumerate}[(i)]
    \item $\pi(e) = 1$;
    \item $\pi(g)\pi(h)\pi(h^{-1}) = \pi(gh)\pi(h^{-1})$ for all $g,h\in G$;
    \item $\pi(g^{-1})\pi(g)\pi(h) = \pi(g^{-1})\pi(gh)$ for all $g,h\in G$.
\end{enumerate}
\end{definition}

\begin{definition}(\cite[Definition~2.4]{ MEP}, \cite{DokEx05})\label{defi.partial.algebra}
    Let $G$ be a group. The \emph{partial group algebra of $G$ with coefficients in $R$}, denoted by $R_{\operatorname{par}}(G)$, is the universal $R$-algebra with generators $\{[g] : g\in G\}$ subject to the relations
\begin{enumerate}[(i)]
    \item $[e] = 1$;
    \item $[g^{-1}][g][h] = [g^{-1}][gh]$;
    \item $[g][h][h^{-1}] = [gh][h^{-1}]$.
\end{enumerate}
\end{definition}

After revisiting the definition of the partial group algebra above, we now introduce our proposed definition of the group algebra with extra relations. This definition is inspired by the work in \cite{ExelLacaQuigg} within the context of C*-algebras.

\begin{definition} Let $G$ be a group. Elements of the form $[g][g^{-1}]$ are denoted by $\varepsilon_g$. If $\mathcal{R}$ is a set of relations on the set $\{\varepsilon_g:g\in G\}$, we define the \emph{partial group algebra of $G$ with coefficients in $R$ and relations $\mathcal{R}$}, denoted by $R_{\operatorname{par}}(G, \mathcal{R})$, as the universal $R$-algebra with generators $\{[g]: g\in G\}$ subject to the relations (i), (ii) and (iii) in Definition~\ref{defi.partial.algebra}, and the relations in $\mathcal{R}$.  
\end{definition}

\begin{remark}
    Each relation in \(\mathcal{R}\) can be expressed in the form \(\sum_i\lambda_i\prod_j\varepsilon_{g_{ij}}=0\), where \(\lambda_i \in R\) and \(g_{ij} \in G\). For simplicity, when we say that an element \(\sum_i\lambda_i\prod_j\varepsilon_{g_{ij}}\) belongs to \(\mathcal{R}\), it should be understood that the corresponding relation \(\sum_i\lambda_i\prod_j\varepsilon_{g_{ij}}=0\) is included in \(\mathcal{R}\).
\end{remark}

\begin{remark}
    The partial group algebra $R_{\operatorname{par}}(G)$ is clearly universal with respect to partial representations of $G$. An analogous property holds for $R_{\operatorname{par}}(G, \mathcal{R})$, which is universal with respect to partial representations of $G$ that satisfy $\mathcal{R}$, that is, partial representations $\pi$ of $G$ such that \(\sum_i\lambda_i\prod_j\pi(g_{ij})\pi(g_{ij}^{-1})=0\) for each relation \(\sum_i\lambda_i\prod_j\varepsilon_{g_{ij}}=0\) in $\mathcal{R}$.
\end{remark}

\begin{notation} We denote by $A_{\operatorname{par}}(G)$ the $R$-subalgebra of $R_{\operatorname{par}}(G)$ generated by the set $\{\varepsilon_g : g\in G\}$. Similarly, $A_{\operatorname{par}}(G,\mathcal{R})$ is the $R$-subalgebra of $R_{\operatorname{par}}(G, \mathcal{R})$ generated by $\{\varepsilon_g : g\in G\}$.
\end{notation}

The following lemma is \cite[Proposition~9.8]{ExelBook} rephrased in the context of partial group algebras with relations.

\begin{lemma}\label{semsentido}
    Let $G$ be a group and $\mathcal{R}$ a set of relations on the set $\{\varepsilon_g:g\in G\}$. Then
    \begin{enumerate}[(i)]
        \item $[g][g^{-1}][g]=\varepsilon_{g}[g]=[g]$ for all $g\in G$,
        \item $\varepsilon_g^2=\varepsilon_g$,
        \item $\varepsilon_g\varepsilon_h=\varepsilon_h\varepsilon_g$ for all $g,h\in G$,
        \item $[g]\varepsilon_h=\varepsilon_{gh}[g]$ for all $g,h\in G$,
        \item $[g_1]\cdots[g_n]=\varepsilon_{g_1}\varepsilon_{g_1g_2}\cdots\varepsilon_{g_1\cdots g_{n-1}}[g_1\cdots g_n]$ for all $g_1,\ldots,g_n\in G$.  
    \end{enumerate}
\end{lemma}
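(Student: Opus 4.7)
The plan is to handle the items in the order (i), (ii), (iv), (iii), (v), since the key identity is (iv); once it is established, (iii) follows immediately and (v) by a short induction. For (i), the first equality $[g][g^{-1}][g]=\varepsilon_g[g]$ is just the definition of $\varepsilon_g$, while for the second I apply Definition~\ref{defi.partial.algebra}(iii) with $h$ replaced by $g^{-1}$ to obtain $[g][g^{-1}][g]=[gg^{-1}][g]=[e][g]=[g]$. Part (ii) is then immediate from (i): $\varepsilon_g^2=[g][g^{-1}][g][g^{-1}]=[g][g^{-1}]=\varepsilon_g$, where (i) collapses the first three factors.

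For (iv), the plan is to show that both $[g]\varepsilon_h$ and $\varepsilon_{gh}[g]$ equal $[gh][h^{-1}]$. The first is immediate: $[g]\varepsilon_h=[g][h][h^{-1}]=[gh][h^{-1}]$ by Definition~\ref{defi.partial.algebra}(iii). For the second, the key move is to apply Definition~\ref{defi.partial.algebra}(ii) with the substitution $g\mapsto gh$ and $h\mapsto h^{-1}$, which yields $[(gh)^{-1}][gh][h^{-1}]=[(gh)^{-1}][g]$; left-multiplying by $[gh]$ and collapsing $[gh][(gh)^{-1}][gh]$ to $[gh]$ via (i) gives $[gh][h^{-1}]=\varepsilon_{gh}[g]$. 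I expect this substitution to be the main obstacle, since hitting on the right choice of indices in relation (ii) is the only nontrivial observation in the whole argument.

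Item (iii) then follows by two successive applications of (iv): $\varepsilon_g\varepsilon_h=[g]\bigl([g^{-1}]\varepsilon_h\bigr)=[g]\varepsilon_{g^{-1}h}[g^{-1}]=\varepsilon_{g(g^{-1}h)}[g][g^{-1}]=\varepsilon_h\varepsilon_g$. Finally, (v) is a short induction on $n$. The base case $n=2$ reads $[g_1][g_2]=\varepsilon_{g_1}[g_1 g_2]$, which is obtained by left-multiplying Definition~\ref{defi.partial.algebra}(ii) (taken with $g=g_1$, $h=g_2$) by $[g_1]$ and using (i) to reduce $[g_1][g_1^{-1}][g_1]$ to $[g_1]$ on the left-hand side. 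For the inductive step, I apply the inductive hypothesis to $[g_1]\cdots[g_{n-1}]$ and then the $n=2$ case to the trailing product $[g_1\cdots g_{n-1}][g_n]$, which produces the extra factor $\varepsilon_{g_1\cdots g_{n-1}}$ and converts the last two brackets into $[g_1\cdots g_n]$.
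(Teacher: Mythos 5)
Your proof is correct: each step checks out (the substitution $g\mapsto gh$, $h\mapsto h^{-1}$ in relation (ii) of Definition~\ref{defi.partialrep} is indeed the right move for (iv), and the remaining items follow as you say). The paper itself gives no proof, deferring to \cite[Proposition~9.8]{ExelBook}, and your computations are essentially the standard argument found there, so there is no substantive difference in approach.
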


Note that since the \(\varepsilon_g\)'s commute, both \(A_{\operatorname{par}}(G)\) and \(A_{\operatorname{par}}(G,\mathcal{R})\) are commutative \(R\)-algebras. Furthermore, it is straightforward to see that \(R_{\operatorname{par}}(G)\) is universal with respect to partial representations, and similarly, \(R_{\operatorname{par}}(G, \mathcal{R})\) is universal with respect to partial representations that satisfy the relations in \(\mathcal{R}\).

Next, we recall the realization of the group algebra as a partial skew group ring (see \cite{ExelBook} for the details regarding the theory of partial skew group rings), and of \(A_{\operatorname{par}}(G)\) as a universal algebra generated by elements and relations, as done in \cite{DokEx05} and \cite{ExelBook}. 

\begin{proposition}\label{uni.prop.apar}
    Let $G$ be a group. For each $g\in G$, define $D_g = A_{\operatorname{par}}(G)\varepsilon_g$ and let $\theta_g:D_{g^{-1}}\to D_g$ be given by $\theta_g(\varepsilon_h\varepsilon_{g^{-1}}) = \varepsilon_{gh}\varepsilon_g$. Then, $\theta$ defines a partial action of $G$ on $A_{\operatorname{par}}(G)$. Moreover, the map $$\begin{array}{rcl} R_{\operatorname{par}}(G) & \to & A_{\operatorname{par}}(G)\rtimes_\theta G \\ {[}{g}{]} & \mapsto & \varepsilon_g\delta_g \end{array}$$
    is an isomorphism.
\end{proposition}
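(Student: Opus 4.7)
My strategy for the partial action is to avoid verifying well-definedness of the formula directly and instead define $\theta_g$ via conjugation in the ambient algebra $R_{\operatorname{par}}(G)$: set $\theta_g(a) := [g]\,a\,[g^{-1}]$ for $a \in D_{g^{-1}} = A_{\operatorname{par}}(G)\varepsilon_{g^{-1}}$. Using Lemma~\ref{semsentido}(iv), a direct computation gives $[g]\,\varepsilon_h\,\varepsilon_{g^{-1}}\,[g^{-1}] = \varepsilon_{gh}\varepsilon_g$, confirming agreement with the claimed formula on generators of $D_{g^{-1}}$. Writing a general $a \in D_{g^{-1}}$ as $a\varepsilon_{g^{-1}}$, one verifies $\theta_g(a)\varepsilon_g = \theta_g(a)$, so $\theta_g$ lands in $D_g$. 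That $\theta_g$ restricts to an $R$-algebra homomorphism $D_{g^{-1}} \to D_g$ uses that $[g^{-1}][g] = \varepsilon_{g^{-1}}$ acts as a unit on $D_{g^{-1}}$ by the commutativity and idempotence of the $\varepsilon$'s.

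The partial action axioms then reduce to short calculations inside $R_{\operatorname{par}}(G)$. From $[e] = 1$ we obtain $\theta_e = \mathrm{id}$. The identity $\theta_{g^{-1}}\circ\theta_g = \mathrm{id}_{D_{g^{-1}}}$ follows from $[g^{-1}]([g]\,a\,[g^{-1}])[g] = \varepsilon_{g^{-1}}\, a\, \varepsilon_{g^{-1}} = a$ for $a \in D_{g^{-1}}$, and symmetrically for the other composition. For the compatibility $\theta_g \circ \theta_h \subseteq \theta_{gh}$, I would invoke Lemma~\ref{semsentido}(v) to rewrite $[g][h] = \varepsilon_g[gh]$ and $[h^{-1}][g^{-1}] = \varepsilon_{h^{-1}}[(gh)^{-1}]$; then for $a$ in the relevant intersection of ideals (divisible by $\varepsilon_{h^{-1}}$), the product $[g][h]\,a\,[h^{-1}][g^{-1}]$ collapses to $[gh]\,a\,[(gh)^{-1}] = \theta_{gh}(a)$ after the idempotent factors are absorbed.

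For the isomorphism, I would first verify that $g \mapsto \varepsilon_g\delta_g$ is a partial representation of $G$ in $A_{\operatorname{par}}(G)\rtimes_\theta G$ by checking the three conditions of Definition~\ref{partialrep} against the standard multiplication $(a\delta_g)(b\delta_h) = \theta_g(\theta_{g^{-1}}(a)\,b)\,\delta_{gh}$; universality of $R_{\operatorname{par}}(G)$ then yields an $R$-algebra homomorphism $\phi : R_{\operatorname{par}}(G) \to A_{\operatorname{par}}(G)\rtimes_\theta G$ with $\phi([g]) = \varepsilon_g\delta_g$. For the inverse, I would define $\psi$ on each summand $D_g\delta_g$ by $\psi(a\delta_g) := a\cdot[g]$, viewing $a$ as an element of $A_{\operatorname{par}}(G) \subseteq R_{\operatorname{par}}(G)$; additivity is immediate, and compatibility with the skew multiplication is obtained by unwinding $\theta_g$ as conjugation and invoking Lemma~\ref{semsentido}(iv) to re-order the $\varepsilon$'s past the $[g]$'s. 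Mutual inversion on generators is automatic: $\psi(\varepsilon_g\delta_g) = \varepsilon_g[g] = [g]$ by Lemma~\ref{semsentido}(i), and $\phi$ is defined so that $[g] \mapsto \varepsilon_g\delta_g$.

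The main obstacle I anticipate is purely one of bookkeeping in showing $\psi$ respects multiplication, since the domains $D_g$ and the resulting idempotent factors must be tracked carefully across several rewrites. However, everything reduces to iterated application of the commutation relation $[g]\varepsilon_h = \varepsilon_{gh}[g]$, so no identity beyond those already collected in Lemma~\ref{semsentido} should be required.
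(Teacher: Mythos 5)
Your proposal is correct and follows essentially the same route as the paper: the paper states Proposition~\ref{uni.prop.apar} without proof (citing the literature) and proves the analogous Proposition~\ref{quefome} by exactly your architecture --- check that $g\mapsto\varepsilon_g\delta_g$ is a partial representation and use universality of the partial group algebra for the forward map, then invert it by $a\delta_g\mapsto a[g]$ and check the two composites on generators. The only differences are cosmetic: you realize $\theta_g$ as conjugation by $[g]$ inside $R_{\operatorname{par}}(G)$ (a tidy way to settle well-definedness, which the paper simply omits), and you verify multiplicativity of the inverse by hand where the paper invokes the universal property of the partial skew group ring applied to the $\theta$-covariant pair $(\iota,u)$.
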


\begin{proposition}\label{AparG.universal}
    Let $G$ be a group. Then the algebra $A_{\operatorname{par}}(G)$ is isomorphic to the universal unital $R$-algebra generated by the set $\{\varepsilon_g : g\in G\}$, subject to the relations that the $\varepsilon_g$'s are commuting idempotents and $\varepsilon_e = 1$.
\end{proposition}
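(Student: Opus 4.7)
The plan is to construct mutually inverse homomorphisms between $A_{\operatorname{par}}(G)$ and the universal algebra $U$ of the statement, with canonical generators $\{u_g\}_{g \in G}$. The easy direction, $\phi \colon U \to A_{\operatorname{par}}(G)$, is immediate from the universal property of $U$: by Lemma~\ref{semsentido}(ii),(iii) the elements $\{\varepsilon_g\}$ are commuting idempotents in $A_{\operatorname{par}}(G)$, and $\varepsilon_e = [e][e^{-1}] = 1$, so there is a unique homomorphism $\phi$ with $\phi(u_g) = \varepsilon_g$, automatically surjective because $\{\varepsilon_g\}$ generates $A_{\operatorname{par}}(G)$ by definition.

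For the reverse direction I would construct a partial action $\alpha$ of $G$ on $U$ and exploit the universal property of $R_{\operatorname{par}}(G)$. For each $g \in G$, set $D_g := U u_g$, which is an ideal of $U$ and a unital $R$-algebra with identity $u_g$. Applying the universal property of $U$ with target $D_g$ (whose identity is $u_g$) produces a homomorphism $\beta_g \colon U \to D_g$ with $\beta_g(u_h) = u_{gh} u_g$; since $\beta_g(u_{g^{-1}}) = u_g = \beta_g(1)$, this factors through the ring isomorphism $U/(u_{g^{-1}} - 1) \cong D_{g^{-1}}$ (valid because $u_{g^{-1}}$ is a central idempotent) to give $\alpha_g \colon D_{g^{-1}} \to D_g$, explicitly $\alpha_g(u_h u_{g^{-1}}) = u_{gh} u_g$. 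The partial action axioms, invertibility via $\alpha_g \circ \alpha_{g^{-1}} = \operatorname{id}_{D_g}$ and the cocycle condition, then reduce to routine checks on generators.

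With $\alpha$ in hand, I would form the partial skew group ring $U \rtimes_\alpha G$ and verify that $\pi(g) := u_g \delta_g$ is a partial representation; the key computation $(u_g \delta_g)(u_h \delta_h) = u_g \alpha_g(u_h u_{g^{-1}}) \delta_{gh} = u_g u_{gh} \delta_{gh}$ makes conditions (ii) and (iii) of Definition~\ref{partialrep} symmetric and immediate, while (i) is obvious. The universal property of $R_{\operatorname{par}}(G)$ then produces $\widetilde{\pi} \colon R_{\operatorname{par}}(G) \to U \rtimes_\alpha G$ with $\widetilde{\pi}([g]) = u_g \delta_g$, and a direct computation gives $\widetilde{\pi}(\varepsilon_g) = u_g \delta_e$. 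Composing with the canonical algebra identification $U \delta_e \cong U$ and restricting to $A_{\operatorname{par}}(G)$ yields $\psi \colon A_{\operatorname{par}}(G) \to U$ with $\psi(\varepsilon_g) = u_g$, which is inverse to $\phi$ on generators and hence everywhere.

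The main obstacle I anticipate is the well-definedness of $\alpha_g$: the universal property of $U$ is being applied in a slightly unusual way, with the ``identity'' of the target $D_g$ being the idempotent $u_g$ rather than $1_U$. Once this point is made precise via the quotient $U/(u_{g^{-1}}-1)$ as above, the remainder is standard bookkeeping in the formalism of partial skew group rings.
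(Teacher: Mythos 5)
Your proof is correct: constructing the partial action of $G$ on the universal algebra $U$ (with the well-definedness of $\alpha_g$ handled via the quotient $U/(u_{g^{-1}}-1)\cong D_{g^{-1}}$), forming $U\rtimes_\alpha G$, and using the universal properties in both directions does yield mutually inverse maps $\phi$ and $\psi$. The paper states Proposition~\ref{AparG.universal} without proof, citing \cite{DokEx05} and \cite{ExelBook}, and your argument is essentially the standard one from those references, running parallel to the crossed-product realizations in Propositions~\ref{uni.prop.apar} and~\ref{quefome}.
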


Our goal is now to generalize the two results above to the context of group algebras with relations. We begin by extending Proposition~\ref{uni.prop.apar} to \( R_{\operatorname{par}}(G, \mathcal{R}) \).
 
\begin{proposition}\label{quefome}
Let $G$ be a group. For each $g\in G$, let $D_g = A_{\operatorname{par}}(G,\mathcal{R})\varepsilon_g$, and define $\theta_g:D_{g^{-1}}\to D_g$  by $\theta_g(\varepsilon_h\varepsilon_{g^{-1}}) = \varepsilon_{gh}\varepsilon_g$. Then $\theta$ defines a partial action of $G$ on $A_{\operatorname{par}}(G,\mathcal{R})$. Moreover, the map
    $$\begin{array}{rcl} R_{\operatorname{par}}(G,\mathcal{R}) & \to & A_{\operatorname{par}}(G,\mathcal{R})\rtimes_\theta G \\ {[}{g}{]} & \mapsto & \varepsilon_g\delta_g \end{array}$$
    is an isomorphism.
\end{proposition}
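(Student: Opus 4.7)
The strategy is to reduce everything to Proposition~\ref{uni.prop.apar}. Write $R_{\operatorname{par}}(G,\mathcal{R}) = R_{\operatorname{par}}(G)/J$, where $J$ is the two-sided ideal of $R_{\operatorname{par}}(G)$ generated by the relations in $\mathcal{R}$; then $A_{\operatorname{par}}(G,\mathcal{R})$ is the image of $A_{\operatorname{par}}(G)$ under the canonical quotient map. A direct computation using Lemma~\ref{semsentido}(i) and (iv) shows that the partial action from Proposition~\ref{uni.prop.apar} can be realized as conjugation inside $R_{\operatorname{par}}(G)$, namely $\theta_g(a) = [g]a[g^{-1}]$ for every $a \in D_{g^{-1}}$. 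Because $J$ is a two-sided ideal, conjugation by $[g]$ preserves $J$, so this formula descends to a ring isomorphism $A_{\operatorname{par}}(G,\mathcal{R})\varepsilon_{g^{-1}} \to A_{\operatorname{par}}(G,\mathcal{R})\varepsilon_g$ which, by the same computation, matches the prescribed formula $\theta_g(\varepsilon_h\varepsilon_{g^{-1}}) = \varepsilon_{gh}\varepsilon_g$. The partial action axioms for the descended family follow at once from those for the original $\theta$.

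For the isomorphism, I use the universal property of $R_{\operatorname{par}}(G,\mathcal{R})$. The map $g \mapsto \varepsilon_g\delta_g \in A_{\operatorname{par}}(G,\mathcal{R})\rtimes_\theta G$ is a partial representation of $G$, verified by the same computation as in Proposition~\ref{uni.prop.apar}. The crucial additional point is that this partial representation satisfies the relations in $\mathcal{R}$: a standard calculation in the partial skew group ring gives $(\varepsilon_g\delta_g)(\varepsilon_{g^{-1}}\delta_{g^{-1}}) = \varepsilon_g\delta_e$, so any relation $\sum_i \lambda_i \prod_j \varepsilon_{g_{ij}} = 0$ of $\mathcal{R}$ yields the element $\bigl(\sum_i \lambda_i \prod_j \varepsilon_{g_{ij}}\bigr)\delta_e$, which vanishes since the relation already holds in $A_{\operatorname{par}}(G,\mathcal{R})$. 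Universality then produces a homomorphism $\Phi: R_{\operatorname{par}}(G,\mathcal{R}) \to A_{\operatorname{par}}(G,\mathcal{R})\rtimes_\theta G$ with $\Phi([g]) = \varepsilon_g\delta_g$.

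To build an inverse $\Psi$, I set $\Psi(a_g\delta_g) = a_g[g]$ on homogeneous components and extend by linearity; this is unambiguous since each element of the skew group ring has a unique decomposition. Multiplicativity of $\Psi$ reduces to checking $a_g[g]\,b_h[h] = \theta_g\bigl(\theta_{g^{-1}}(a_g)\,b_h\bigr)[gh]$, which falls out of Lemma~\ref{semsentido} after rewriting $[g]b_h = \theta_g(b_h\varepsilon_{g^{-1}})[g]$ and $[g][h] = \varepsilon_g[gh]$ and using $a_g\varepsilon_g = a_g$. The two composition checks are then routine: on one hand $\Psi\Phi([g]) = \varepsilon_g[g] = [g]$ by Lemma~\ref{semsentido}(i); on the other hand, $\Phi(\varepsilon_k) = \varepsilon_k\delta_e$ follows from expanding $\Phi([k][k^{-1}])$, and then $\Phi\Psi(a_g\delta_g) = (a_g\delta_e)(\varepsilon_g\delta_g) = a_g\delta_g$.

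The principal point of care is the descent of $\theta$ to the quotient; once the conjugation formula $\theta_g(\cdot) = [g](\cdot)[g^{-1}]$ is recognized, the invariance of $J$ is automatic from its two-sidedness, and the remaining steps are essentially mechanical extensions of the argument in Proposition~\ref{uni.prop.apar}.
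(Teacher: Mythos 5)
Your proof is correct, and its first half (getting the map $[g]\mapsto\varepsilon_g\delta_g$ from the universal property of $R_{\operatorname{par}}(G,\mathcal{R})$, after checking that the partial representation $g\mapsto\varepsilon_g\delta_g$ satisfies $\mathcal{R}$ because $(\varepsilon_g\delta_g)(\varepsilon_{g^{-1}}\delta_{g^{-1}})=\varepsilon_g\delta_e$ and $A_{\operatorname{par}}(G,\mathcal{R})\delta_e\cong A_{\operatorname{par}}(G,\mathcal{R})$) is exactly the paper's argument. Where you diverge is in the two points the paper delegates elsewhere. For the partial action, the paper simply says the verification is ``identical to the case without relations,'' i.e.\ it is carried out directly in $A_{\operatorname{par}}(G,\mathcal{R})$; you instead observe that $\theta_g$ is conjugation $a\mapsto[g]a[g^{-1}]$ (a consequence of Lemma~\ref{semsentido}) and let it descend along the two-sided ideal $J$ defining $R_{\operatorname{par}}(G,\mathcal{R})=R_{\operatorname{par}}(G)/J$. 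This is a clean way to settle well-definedness, and as a by-product it makes the $\theta$-invariance of $\ker\rho$ (which the paper proves separately in Proposition~\ref{pressaoalta} using the skew group ring) essentially automatic. For the inverse, the paper invokes the universal property of the partial skew group ring: the pair $(\iota,u)$ with $u(g)=[g]$ is $\theta$-covariant, giving $\iota\times u$ with $a\delta_g\mapsto a[g]$. You instead define $\Psi(a_g\delta_g)=a_g[g]$ by hand (legitimate, since elements of $A_{\operatorname{par}}(G,\mathcal{R})\rtimes_\theta G$ have unique homogeneous decompositions) and verify multiplicativity directly via $[g]b_h=\theta_g(b_h\varepsilon_{g^{-1}})[g]$ and $[g][h]=\varepsilon_g[gh]$; your computation is sound and is in effect an unpacking of the covariance argument. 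So the two proofs have the same skeleton; yours is more self-contained and explicit (no appeal to the covariant-pair universal property), at the cost of some routine calculation that the paper absorbs into citations of the no-relations case.
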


\begin{proof}

The steps to prove that $\theta$ is a partial action are identical to those in the case without relations, so we omit them here. The map $\pi: G \to A_{\operatorname{par}}(G,\mathcal{R}) \rtimes_\theta G$ defined by $\pi(g) = 1_g \delta_g$ is a partial representation of $G$ (following the same proof as in the case without relations). Since $\pi(g)\pi(g^{-1}) = (\varepsilon_g\delta_g)(\varepsilon_{g^{-1}}\delta_{g^{-1}}) = \varepsilon_g\delta_e$ and $A_{\operatorname{par}}(G,\mathcal{R})\delta_e$ is isomorphic to $A_{\operatorname{par}}(G,\mathcal{R})$, it follows that $\pi$ satisfies the relations in $\mathcal{R}$. Therefore, there exists a homomorphism $\eta: R_{\operatorname{par}}(G,\mathcal{R}) \to A_{\operatorname{par}}(G,\mathcal{R})\rtimes_\theta G$ such that $[g] \mapsto 1_g\delta_g$.

On the other hand, consider the partial representation $u: G \to R_{\operatorname{par}}(G,\mathcal{R})$ defined by $u(g) = [g]$ and the inclusion $\iota: A_{\operatorname{par}}(G,\mathcal{R}) \to R_{\operatorname{par}}(G,\mathcal{R})$. Since $(\iota, u)$ is $\theta$-covariant (again, by the same reasoning as in the case without relations), there exists a homomorphism  $\iota \times u: A_{\operatorname{par}}(G,\mathcal{R}) \rtimes_\theta G \to R_{\operatorname{par}}(G,\mathcal{R})$ such that $(\iota \times u)(a\delta_g) = \iota(a)u(g) = a[g]$. It is clear that $\eta$ and $\iota \times u$ are inverses of each other.
\end{proof}

Let $\rho:A_{\operatorname{par}}(G) \to A_{\operatorname{par}}(G, \mathcal{R})$ be the surjective homomorphism obtained from the restriction (and co-restricion) of the natural homomorphism of \(R_{\operatorname{par}}(G)\) onto \(R_{\operatorname{par}}(G, \mathcal{R}) \) that sends $[g]$ to $[g]$. It is clear that $\rho$ sends $\varepsilon_g$ to $\varepsilon_g$. To extend Proposition~\ref{AparG.universal} to \( A_{\operatorname{par}}(G, \mathcal{R}) \), we first need to prove a result concerning $\rho$.

\begin{proposition}\label{pressaoalta}
    The kernel of $\rho$, $\ker\rho$, is the smallest $\theta$-invariant ideal of $A_{\operatorname{par}}(G)$ that contains $\mathcal{R}$.
\end{proposition}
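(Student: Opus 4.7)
The plan is to show the two inclusions separately. For one direction, I would verify directly that $\ker\rho$ contains $\mathcal{R}$ and is $\theta$-invariant. For the other, I would take any $\theta$-invariant ideal $I$ of $A_{\operatorname{par}}(G)$ that contains $\mathcal{R}$, form the partial skew group ring arising from the induced partial action on $A_{\operatorname{par}}(G)/I$, and use the universal property of $R_{\operatorname{par}}(G, \mathcal{R})$ together with Proposition~\ref{quefome} to factor $\rho$ through the quotient.

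For the first inclusion, containment of $\mathcal{R}$ is immediate, since every element $\sum_i\lambda_i\prod_j\varepsilon_{g_{ij}}$ corresponding to a relation of $\mathcal{R}$ is zero in $A_{\operatorname{par}}(G, \mathcal{R})$. The $\theta$-invariance $\theta_g(\ker\rho \cap D_{g^{-1}}) \subseteq \ker\rho \cap D_g$ follows from the fact that the partial actions on $A_{\operatorname{par}}(G)$ and on $A_{\operatorname{par}}(G, \mathcal{R})$ are defined by the same formula $\varepsilon_h\varepsilon_{g^{-1}} \mapsto \varepsilon_{gh}\varepsilon_g$ on generators; consequently $\rho$ is equivariant, and $x \in \ker\rho \cap D_{g^{-1}}$ gives $\rho(\theta_g(x)) = \theta_g(\rho(x)) = 0$.

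For minimality, $\theta$-invariance of $I$ lets $\theta$ descend to a partial action $\bar\theta$ on $A_{\operatorname{par}}(G)/I$, so I can form $(A_{\operatorname{par}}(G)/I) \rtimes_{\bar\theta} G$. Writing $\bar\varepsilon_g$ for the image of $\varepsilon_g$, the map $\pi(g) = \bar\varepsilon_g \delta_g$ is a partial representation of $G$, and a computation paralleling the one in the proof of Proposition~\ref{quefome} yields $\pi(g)\pi(g^{-1}) = \bar\varepsilon_g\delta_e$. Since each element of $\mathcal{R}$ lies in $I$, its image in the quotient vanishes, so $\pi$ satisfies the relations $\mathcal{R}$. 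The universal property of $R_{\operatorname{par}}(G, \mathcal{R})$ then produces a homomorphism $R_{\operatorname{par}}(G, \mathcal{R}) \to (A_{\operatorname{par}}(G)/I) \rtimes_{\bar\theta} G$ which restricts to a homomorphism $\phi: A_{\operatorname{par}}(G, \mathcal{R}) \to A_{\operatorname{par}}(G)/I$ sending $\varepsilon_g \mapsto \bar\varepsilon_g$. The composite $\phi \circ \rho$ then agrees with the quotient map $A_{\operatorname{par}}(G) \to A_{\operatorname{par}}(G)/I$ on the generators $\{\varepsilon_g\}$ and hence everywhere, giving $\ker\rho \subseteq I$.

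The main obstacle is the minimality step, and specifically verifying that the partial representation $\pi$ built from the quotient actually satisfies $\mathcal{R}$. This hinges on the identification $\pi(g)\pi(g^{-1}) = \bar\varepsilon_g\delta_e$ together with the $\theta$-invariance of $I$, which is precisely what makes the quotient partial action well-defined and the factorization through the partial skew group ring possible.
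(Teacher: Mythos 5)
Your proposal is correct and follows essentially the same route as the paper: direct verification that $\ker\rho$ contains $\mathcal{R}$ and is $\theta$-invariant, then minimality via the partial representation $g\mapsto\bar\varepsilon_g\delta_g$ into $(A_{\operatorname{par}}(G)/I)\rtimes_\theta G$ and the universal property of $R_{\operatorname{par}}(G,\mathcal{R})$, factoring the quotient map through $\rho$. The only cosmetic difference is in the invariance step, where you invoke equivariance of $\rho$ (valid, since both partial actions are given by the same formula on spanning monomials) while the paper conjugates $a\delta_e$ by $\varepsilon_g\delta_g$ inside the skew group ring; these amount to the same observation.
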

\begin{proof}
   If $\sum_i\lambda_i\prod_j\varepsilon_{g_{ij}} = 0$ is a relation in $\mathcal{R}$, then $\sum_i\lambda_i\prod_j\varepsilon_{g_{ij}} \in \ker\rho$, which implies that $\ker\rho$ contains $\mathcal{R}$.
To see that $\ker\rho$ is $\theta$-invariant, let $a = \varepsilon_{g_1}\varepsilon_{g_2}\cdots\varepsilon_{g_n}\varepsilon_{g^{-1}} \in \ker\rho \cap D_{g^{-1}}$. Since $a = 0$ in $A_{\operatorname{par}}(G,\mathcal{R})$, it follows that $a\delta_e = 0$ in $A_{\operatorname{par}}(G,\mathcal{R}) \rtimes_\theta G$. Then
$$
0 = (\varepsilon_{g} \delta_{g})(a \delta_{e})(\varepsilon_{g^{-1}} \delta_{g^{-1}}) = \theta_g(a)\delta_{e},
$$
which implies that $\theta_g(a) = 0$ in $A_{\operatorname{par}}(G,\mathcal{R})$, and hence $\theta_g(a) \in \ker\rho$. By linearity, $\ker\rho$ is $\theta$-invariant.

Now, let $I$ be another ideal of $A_{\operatorname{par}}(G)$ that is $\theta$-invariant and contains $\mathcal{R}$. Since $I$ is $\theta$-invariant, the partial skew group ring $A_{\operatorname{par}}(G)/I \rtimes_\theta G$ is well defined. The map $G \to A_{\operatorname{par}}(G)/I \rtimes_\theta G$ given by $g \mapsto \bar{\varepsilon}_g\delta_g$ defines a partial representation of $G$ that satisfies the relations in $\mathcal{R}$. Consequently, there exists a homomorphism $\tau: R_{\operatorname{par}}(G,\mathcal{R}) \to A_{\operatorname{par}}(G)/I \rtimes_\theta G$ such that $\tau([g]) = \bar{\varepsilon}_g\delta_g$. Restricting $\tau$ to $A_{\operatorname{par}}(G,\mathcal{R})$, we obtain the following chain of homomorphisms:
    $$\begin{array}{ccccccc}
        A_{\operatorname{par}}(G) & \xrightarrow{\rho} & A_{\operatorname{par}}(G,\mathcal{R}) & \xrightarrow{\tau} & (A_{\operatorname{par}}(G)/I)\delta_e & \cong & A_{\operatorname{par}}(G)/I \\
        \varepsilon_g & \mapsto & \varepsilon_g & \mapsto & \bar{\varepsilon}_g\delta_e & \mapsto & \bar{\varepsilon}_g.
      \end{array}$$
    Therefore, $\ker\rho \subseteq I$.
\end{proof}

One can check that the smallest $\theta$-invariant ideal of $A_{\operatorname{par}}(G)$ that contains $\mathcal{R}$ is the ideal generated by the set $$\mathcal{R}_{\operatorname{inv}} = \Big\{\varepsilon_g\sum_i\lambda_i\prod_j\varepsilon_{gg_{ij}} : \sum_i\lambda_i\prod_j\varepsilon_{g_{ij}} \in \mathcal{R}, \ g\in G\Big\}.$$
Then, by the proposition above, $\ker\rho$ is the ideal generated by $\mathcal{R}_{\operatorname{inv}}$.

\begin{corollary}
    The algebra $A_{\operatorname{par}}(G,\mathcal{R})$ is isomorphic to the universal unital $R$-algebra generated by the set $\{\varepsilon_g : g \in G\}$, subject to the relations that the $\varepsilon_g$ are commuting idempotents, $\varepsilon_e = 1$, and the relations in $\mathcal{R}_{\operatorname{inv}}$.
\end{corollary}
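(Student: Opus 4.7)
The plan is to combine the universal description of $A_{\operatorname{par}}(G)$ given by Proposition~\ref{AparG.universal} with the identification of $\ker\rho$ supplied by Proposition~\ref{pressaoalta}. Since $\rho$ is surjective, the first isomorphism theorem yields $A_{\operatorname{par}}(G,\mathcal{R}) \cong A_{\operatorname{par}}(G)/\ker\rho$. By Proposition~\ref{pressaoalta} together with the description stated immediately before the corollary, $\ker\rho$ is precisely the ideal $\langle \mathcal{R}_{\operatorname{inv}}\rangle$ of $A_{\operatorname{par}}(G)$ generated by $\mathcal{R}_{\operatorname{inv}}$. Thus the task reduces to showing that $A_{\operatorname{par}}(G)/\langle \mathcal{R}_{\operatorname{inv}}\rangle$ satisfies the claimed universal property.

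To do this, let $B$ denote the universal unital $R$-algebra described in the statement, with generators $\{\tilde\varepsilon_g : g \in G\}$. The universal property of $A_{\operatorname{par}}(G)$ (Proposition~\ref{AparG.universal}) provides a homomorphism $\varphi : A_{\operatorname{par}}(G) \to B$ sending $\varepsilon_g \mapsto \tilde\varepsilon_g$, and $\varphi$ vanishes on every element of $\mathcal{R}_{\operatorname{inv}}$ by definition of $B$, hence descends to $\bar\varphi : A_{\operatorname{par}}(G)/\langle \mathcal{R}_{\operatorname{inv}}\rangle \to B$. Conversely, the images $\varepsilon_g + \langle \mathcal{R}_{\operatorname{inv}}\rangle$ in the quotient are commuting idempotents with $\varepsilon_e + \langle \mathcal{R}_{\operatorname{inv}}\rangle = 1$, and they satisfy all relations in $\mathcal{R}_{\operatorname{inv}}$ (by construction); the universal property of $B$ therefore gives a homomorphism $\psi : B \to A_{\operatorname{par}}(G)/\langle \mathcal{R}_{\operatorname{inv}}\rangle$ inverse to $\bar\varphi$ on generators. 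Composing with the isomorphism $A_{\operatorname{par}}(G)/\langle \mathcal{R}_{\operatorname{inv}}\rangle \cong A_{\operatorname{par}}(G,\mathcal{R})$ from the previous paragraph completes the proof.

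The only nonformal piece is the assertion (stated as a check in the paragraph before the corollary) that $\langle \mathcal{R}_{\operatorname{inv}}\rangle$ is the smallest $\theta$-invariant ideal containing $\mathcal{R}$. This is the main (but still routine) obstacle: containment of $\mathcal{R}$ is immediate on taking $g = e$ in the defining formula for $\mathcal{R}_{\operatorname{inv}}$ and using $\varepsilon_e = 1$. For $\theta$-invariance, one applies $\theta_g$ to a typical generator of $\langle \mathcal{R}_{\operatorname{inv}}\rangle \cap D_{g^{-1}}$, namely something of the form $\varepsilon_h\bigl(\sum_i\lambda_i\prod_j \varepsilon_{hg_{ij}}\bigr)\varepsilon_{g^{-1}}$, and uses the commutation rules from Lemma~\ref{semsentido} together with $\theta_g(\varepsilon_k\varepsilon_{g^{-1}}) = \varepsilon_{gk}\varepsilon_g$ to rewrite the image as $\varepsilon_{gh}\bigl(\sum_i\lambda_i\prod_j \varepsilon_{ghg_{ij}}\bigr)\varepsilon_g$, which again lies in $\mathcal{R}_{\operatorname{inv}}$; linearity and the ideal property then finish the check. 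Minimality follows because any $\theta$-invariant ideal containing $\mathcal{R}$ automatically contains every $\theta_g$-translate of every element of $\mathcal{R}$, hence contains $\mathcal{R}_{\operatorname{inv}}$.
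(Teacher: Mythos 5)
Your proof is correct and follows essentially the same route as the paper: it combines the universal description of $A_{\operatorname{par}}(G)$ from Proposition~\ref{AparG.universal} with the identification $\ker\rho=\langle\mathcal{R}_{\operatorname{inv}}\rangle$ coming from Proposition~\ref{pressaoalta} and the remark preceding the corollary, and then passes to the quotient. The only difference is that you spell out the routine universal-property argument and the verification that $\langle\mathcal{R}_{\operatorname{inv}}\rangle$ is the smallest $\theta$-invariant ideal containing $\mathcal{R}$, details the paper leaves to the reader.
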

\begin{proof}
    This follows from Proposition~\ref{AparG.universal} and the fact that $\ker\rho$ is the ideal generated by $\mathcal{R}_{\operatorname{inv}}$.
\end{proof}

\section{Commutative algebras generated by idempotents}

In the previous section, we identified two commutative algebras generated by idempotents. Keimel describes commutative algebras over commutative unital rings in \cite{Keimel}, with results based on Stone duality. In this section, we briefly recall Stone duality and establish some results that will be necessary to describe the partial group algebra with relations using a topological partial action.

We consider a Boolean algebra as a relatively complemented distributive lattice with a least element. We do not assume the existence of a greatest element. We thus use a generalized version of Stone duality for Boolean algebras, and we view a Stone space as a Hausdorff space with a basis of compact-open sets.

Let $B$ be a Boolean algebra. The dual of $B$, denoted by $\widehat{B}$, is the Stone space of all ultrafilters in $B$, with the topology given by the basis of compact-open sets $\{O_x\}_{x\in B}$, where $O_x := \{\xi \in \widehat{B} : x \in \xi\}$. Conversely, given a Stone space $X$, the family of all compact-open subsets of $X$ forms a Boolean algebra. For more details, see \cite[Section~2.3]{BCGWSubshift} or \cite[Section~2.4]{LawsonStoneDuality}. We note that the Boolean algebra $B$ has a greatest element if and only if $\widehat{B}$ is compact.

Fix a nonzero unital commutative ring $R$. Given a Stone space $X$, we denote by $\Lc(X,R)$ the algebra of locally constant functions with compact support from $X$ to $R$. For an algebra $A$, we let $E(A)$ denote the set of idempotents of $A$.

We begin by considering the universal unital $R$-algebra generated by commuting idempotents. It is not necessarily true that this universal algebra is torsion-free, so we do not obtain that the homomorphisms given by \cite[Theorem~1]{Keimel} is an isomorphism. Nevertheless, under the hypothesis that the ring $R$ is indecomposable, the universal property may be used to find an inverse to the map given in \cite[Theorem~1]{Keimel}.

\begin{proposition}\label{almoco}
    Suppose $R$ is indecomposable, and let $A_S$ be the universal $R$-algebra generated by a non-empty set $S$ with a distinguished element $1$, subject to the relations $x^2=x$ and $xy=yx$ for all $x,y\in S$ and $1$ is a unit. Then $A_S$ is not the zero algebra and the unital $R$-algebra homomorphism $\Psi:A_S\to\Lc(\widehat{E(A_S)},R)$ given by $\Psi(x)=1_{O_x}$ for all $x\in S$ is an isomorphism.
\end{proposition}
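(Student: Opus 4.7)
The plan is to verify $A_S \neq 0$, construct $\Psi$ via the universal property, and then prove bijectivity by showing that under indecomposability of $R$ the Boolean algebra $E(A_S)$ coincides with the Boolean subalgebra $\langle S \rangle$ of $A_S$ generated by $S \cup \{1\}$.

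First, the set map $S \to R$ sending every element to $1 \in R$ respects all defining relations (since $1 \in R$ is a commuting idempotent), so by universality it extends to an $R$-algebra homomorphism $A_S \to R$; this forces $A_S \neq 0$, and hence $1 \in A_S$ is the nonzero top of $E(A_S)$ and $\widehat{E(A_S)}$ is compact. Next, the family $\{1_{O_x} : x \in S\}$ consists of commuting idempotents of $\Lc(\widehat{E(A_S)}, R)$, with $1_{O_1}$ equal to the unit (since $1$ is the top of $E(A_S)$ and therefore lies in every ultrafilter, so $O_1 = \widehat{E(A_S)}$); by universality there is a unique $R$-algebra homomorphism $\Psi$ extending $x \mapsto 1_{O_x}$.

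The crucial auxiliary fact is that for any finite subset $F \subseteq S$ containing $1$, the atoms of the Boolean subalgebra generated by $F$ are $R$-linearly independent in $A_S$. Each such atom has the form $\alpha_I = \prod_{x \in I} x \cdot \prod_{x \in F \setminus I}(1 - x)$ for $I \subseteq F$, and the assignment $x \mapsto 1$ if $x \in I$, $x \mapsto 0$ otherwise, extends by universality to a homomorphism $A_S \to R$ that sends $\alpha_I$ to $1$ and every other atom to $0$, which separates $\alpha_I$ from the remaining atoms. Now, any idempotent $e \in E(A_S)$ lies in some such subalgebra and can be written $e = \sum_I r_I \alpha_I$; expanding $e^2 = e$, using pairwise orthogonality of atoms, and invoking their linear independence gives $r_I^2 = r_I$ for each $I$, whence $r_I \in \{0, 1\}$ by the indecomposability of $R$. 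Thus $e$ is a Boolean sum of atoms and lies in $\langle S \rangle$, i.e., $E(A_S) = \langle S \rangle$.

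Given this, bijectivity of $\Psi$ follows. For injectivity, any $a \in A_S$ may be written $a = \sum_I r_I \alpha_I$ as above; then $\Psi(a) = \sum_I r_I 1_{O_{\alpha_I}}$ with the $O_{\alpha_I}$ pairwise disjoint, so $\Psi(a) = 0$ forces $r_I = 0$ whenever $\alpha_I \neq 0$ (by Stone duality, $O_\alpha \neq \emptyset$ iff $\alpha \neq 0$), hence $a = 0$. For surjectivity, every element of $\Lc(\widehat{E(A_S)}, R)$ is an $R$-linear combination of characteristic functions of compact-open sets of the form $O_e$ with $e \in E(A_S) = \langle S \rangle$, and each such $1_{O_e}$ equals $\Psi(e)$. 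The main obstacle is the identification $E(A_S) = \langle S \rangle$, which is precisely where indecomposability enters: without it, $A_S$ may contain idempotents that are not Boolean combinations of $S$ (for instance, $3x \in \mathbb{Z}/6[x]/(x^2 - x)$ when $R = \mathbb{Z}/6$ and $S = \{1, x\}$), in which case $\Psi$ fails to be surjective.
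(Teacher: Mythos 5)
Your proof is correct, but it takes a genuinely different route from the paper's. The paper invokes Keimel's theorem to produce a surjective homomorphism $\Upsilon:\Lc(\widehat{E(A_S)},R)\to A_S$ with $\Upsilon(1_{O_e})=e$, obtains injectivity of $\Psi$ from $\Upsilon\circ\Psi=\mathrm{id}_{A_S}$, and then its main technical step is showing $\Psi(e)=1_{O_e}$ for every idempotent $e$: after a disjointification $e=\sum_i r_ip_i$ into orthogonal products of generators and complements, the identity $r_i^2p_i=r_ip_i$ is pushed through the injective $\Psi$, evaluated at a point of $O_{p_i}$, and indecomposability forces $r_i=1$; finally $\Psi\circ\Upsilon=\mathrm{id}$. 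You bypass Keimel entirely: you establish $R$-linear independence of the atoms $\alpha_I$ of each finite Boolean subalgebra via the evaluation characters $A_S\to R$ furnished by the universal property (i.e.\ by the freeness of $A_S$), and this independence plays exactly the role that injectivity of $\Psi$ plays in the paper when you run the same orthogonality-plus-indecomposability argument to get $r_I^2=r_I$, hence $r_I\in\{0,1\}$ and $E(A_S)=\langle S\rangle$; injectivity and surjectivity of $\Psi$ are then verified directly, by evaluating at ultrafilters (using $O_\alpha\neq\emptyset$ iff $\alpha\neq0$) and by the Stone-duality fact that every compact-open subset of $\widehat{E(A_S)}$ is some $O_e$ with $\Psi(e)=1_{O_e}$. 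What each buys: your argument is self-contained and yields the identification of $E(A_S)$ with the Boolean algebra generated by $S$ as an explicit byproduct, but it depends on having separating characters, which exist precisely because $A_S$ is the universal (free) object; the paper's use of Keimel outsources the duality bookkeeping and needs no such characters. Two harmless imprecisions to note: the characters $\phi_I$ exist only for $I\ni 1$ (for $1\notin I$ one has $\alpha_I=0$, so these are not atoms anyway), and the step $\Psi(\alpha_I)=1_{O_{\alpha_I}}$ silently uses that $\widehat{E(A_S)}$ is compact so that $1-1_{O_x}=1_{O_{1-x}}$ lies in $\Lc(\widehat{E(A_S)},R)$; both are easy to make explicit.
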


\begin{proof}
To prove that $A_S$ is nonzero, consider the algebra $R^S$. For each $x\in S$ with $x\neq 1$, consider $e_x\in R^S$ the element that has $1$ in coordinate $x$ and $0$ otherwise, and consider $e_1$ the unit of $R^S$. Then the family $\{e_x\}_{x\in S}$ satisfies the relations defining $A_S$. Since we are assuming that $R$ is a nonzero ring, it follows that by the universality of $A_S$ that $A_S$ is nonzero.

By \cite[Theorem~1]{Keimel}, there exists a surjective homomorphism $\Upsilon:\Lc(\widehat{E(A_S)},R)\to A_S$ such that $\Upsilon(1_{O_e})=e$ for all $e\in E(A_S)$. By the universal property of $A_S$, there exists a homomorphism $\Psi:A_S\to \Lc(\widehat{E(A_S)},R)$ such that $\Upsilon(x)=1_{O_x}$ for all $x\in S$, since each $1_{O_x}$ is idempotent and $\Lc(\widehat{E(A_S)},R)$ is commutative. Note that $\Upsilon(\Psi(x))=x$ for all $x\in S$. Once again, applying the universal property of $A_S$, we conclude that $\Upsilon\circ\Psi=Id_{A_S}$. In particular, $\Psi$ is injective.

We claim that for every $e\in E(A_S)$, we have that $\Psi(e)=1_{O_e}$. To prove this claim, let $M_S$ be the free abelian monoid generated by $S$ with $1$ being the unit of $M_S$, and note that $A_S$ is generated by $M_S$ as an $R$-module. First, suppose that $e\in M_S$, that is $e=x_1\cdots x_n$ for some $n\in\nn$ and $x_1,\cdots,x_n\in S$. In this case, in the Boolean algebra $E(A_S)$, we have $e=x_1\wedge\cdots\wedge x_n$, and hence
    \[\Psi(e)=1_{O_{x_1}}\cdot\cdots\cdot 1_{O_{x_n}}=1_{O_{x_1\wedge\cdots\wedge x_n}}=1_{O_e}.\]
    
    Now suppose that $e$ is the complement of an element of $M_S$ in $E(A_S)$, that is, $e=1-x_1\cdots x_n$ for some $n\in\nn$, and $x_1,\cdots,x_n\in S$. Applying the previous case, we have that
    \[\Psi(e)=1-1_{O_{x_1\cdots x_n}}=1_{O_{x_1\cdots x_n}^c}=1_{O_{1-x_1\cdots x_n}}=1_{O_e}.\]

   Given that the product in $E(A_S)$ corresponds to the meet as a Boolean algebra as above, it follows that if $e$ is a product of elements of $M_S$ and their complements, then $\Psi(e)=1_{O_e}$.
    
    For the general case, given $e\in E(A_S)\setminus\{0\}$, a usual disjointification procedure (see for instance \cite[Lemma~4.1]{BoavaCastro}) shows that $e=\sum_{i=1}^n r_ip_i$, where $r_i\in R\setminus\{0\}$, $p_i$ is a non zero product of elements of $M_S$ and their complements in $E(A_S)$, and $p_ip_j=0$ for all $i,j=1,\ldots,n$ with $i\neq j$. Given $i\in\{1,\ldots,n\}$, note that $r_i^2p_i=e^2p_i=ep_i=r_ip_i$. Applying the previous case and using that $\Psi$ is injective, we conclude that $r_i^21_{O_{p_i}}=r_i1_{O_{p_i}}$. Since $p_i\neq 0$, we have that $O_{p_i}\neq\emptyset$. Applying the above equality to a point of $O_{p_i}$, we see that $r_i^2=r_i$. Because $R$ is indecomposable and $r_i\neq 0$, we have that $r_i=1$. It follows that
    \[e=p_1+\cdots+p_n=p_1\vee\cdots\vee p_n,\]
    where the last equality is due to the hypothesis that $p_ip_j=0$ if $i\neq j$. Note that $O_{p_i}\cap O_{p_j}=\emptyset$ if $i\neq j$, and hence
    \[\Upsilon(e)=1_{O_{p_1}}+\cdots+1_{O_{p_n}}=1_{O_{p_1}\cup\cdots\cup O_{p_n}}=1_{O_{p_1\vee\cdots\vee p_n}}=1_{O_e}.\]

    Finally, we prove that $\Psi\circ \Upsilon=Id_{\Lc(\widehat{E(A_S)},R)}$. An element $f\in \Lc(\widehat{E(A_S)},R)$ can be written as $f=\sum_{i=1}^nr_i1_{O_{e_i}}$, for some $r_i\in R$ and $e_i\in E(A_S)$ for each $i=1,\ldots,n$. Hence
    \[\Psi(\Upsilon(f))=\Psi\left(\sum_{i=1}^nr_ie_i\right)=\sum_{i=1}^nr_i1_{O_{e_i}}.\]

    It follows that $\Upsilon=\Psi^{-1}$ and $\Psi$ is an isomorphism.
\end{proof}

We will apply Proposition~\ref{almoco} in the context of the previous section. To do so, we consider quotients of \(\Lc(X, R)\) for a given Stone space \(X\).

\begin{lemma}\label{lemma.Phi.surjective}
    Let $X$ be a Stone space and $F\scj X$ a closed set. Then $F$ is also a Stone space and the map $\Phi:\Lc(X,R)\to\Lc(F,R)$ given by $\Phi(f)=f|F$, for $f\in \Lc(X,R)$, is a surjective $R$-algebra homomorphism.
\end{lemma}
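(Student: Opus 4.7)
The plan is to verify three claims in turn: that $F$ inherits the structure of a Stone space, that the restriction map $\Phi$ lands in $\Lc(F,R)$ and is an $R$-algebra homomorphism, and that $\Phi$ is surjective. The first two are essentially formal; the content of the lemma lies in surjectivity.

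For the Stone-space claim, $F$ is Hausdorff as a subspace of a Hausdorff space. Given a compact-open $U$ of $X$, the set $U \cap F$ is open in $F$ and, being a closed subset of the compact set $U$, is compact; hence it is compact-open in $F$. Since the compact-opens form a basis for $X$, the sets $U \cap F$ form a basis for $F$. For the well-definedness of $\Phi$, any $f \in \Lc(X,R)$ is continuous when $R$ carries the discrete topology, so each fiber $f^{-1}(\{r\})$ is clopen; in particular $\{x : f(x) \neq 0\} = \supp(f)$ is clopen. Then $f|F$ is locally constant, with $\supp(f|F) = \supp(f)\cap F$ a closed subset of the compact set $\supp(f)$, hence compact. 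That $\Phi$ preserves the algebra operations is immediate.

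For surjectivity, let $g \in \Lc(F,R)$ and write $g = \sum_{i=1}^n r_i 1_{U_i}$, where the $r_i \in R\setminus\{0\}$ are the distinct nonzero values of $g$ and $U_i := g^{-1}(\{r_i\})$ are pairwise disjoint compact-open subsets of $F$. The key step is the observation that every compact-open subset of $F$ has the form $W \cap F$ for some compact-open $W$ in $X$: given $U_i$, each $x \in U_i$ admits a basic open neighborhood $V_x \cap F \subseteq U_i$ with $V_x$ compact-open in $X$, and by compactness of $U_i$ finitely many $V_{x_1},\ldots,V_{x_k}$ suffice; setting $W_i := V_{x_1}\cup\cdots\cup V_{x_k}$ yields a compact-open set in $X$ with $W_i \cap F = U_i$. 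Define $f := \sum_{i=1}^n r_i 1_{W_i} \in \Lc(X,R)$. For $x \in F$, we have $x \in W_i$ iff $x \in W_i \cap F = U_i$, and as the $U_i$ are pairwise disjoint at most one term contributes, giving $f(x) = g(x)$. Thus $\Phi(f) = g$.

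The main point to watch is obtaining the \emph{equality} $W_i \cap F = U_i$ rather than a mere inclusion; this is why one must cover $U_i$ by basic opens chosen to lie inside $U_i$, rather than simply choosing compact-opens of $X$ that meet $F$ in supersets of the $U_i$.
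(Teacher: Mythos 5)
Your proof is correct and follows essentially the same route as the paper: write $g$ as a finite $R$-combination of characteristic functions of its compact-open fibers and lift each fiber to a compact-open subset of $X$ whose intersection with $F$ is exactly that fiber, the lifting being done via compactness of the fiber. The only (harmless) difference is that the paper additionally disjointifies the lifted sets inside $X$, whereas you observe that disjointness of their traces on $F$ already suffices to conclude $\Phi(f)=g$.
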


\begin{proof}
    Note that $F$ is Hausdorff and the family $\{U\cap F:U\text{ is compact-open in }X\}$ is a basis of compact-open sets for $F$. Hence, $F$ is a Stone space.

    It is clear that $\Phi$ is an $R$-algebra homomorphism. It remains to show that $\Phi$ is surjective. Let $g\in \Lc(F,R)$ be non zero. We can write $g=\sum_{i=1}^n r_i1_{V_i}$, where $r_1,\ldots,r_n$ are non zero pairwise distinct elements of $R$ and $V_i=g^{-1}(r_i)$. Note that each $V_i$ is compact and open relative to $F$. For each $i=1,\ldots,n$, there exists a set $U_i$ open relative to $X$ such that $V_i=U_i\cap F$. Because $U_i$ can be written as a union of compact-open sets of $X$ and $V_i$ is compact, we may assume without loss of generality that $U_i$ is compact-open. For each $i=1,\ldots,n$, consider the set $U'_i=U_i\setminus\bigcup_{j\in\{1,\ldots,n\}\setminus\{i\}}U_j$, which is again compact-open in $X$. Note that $V_i\scj U'_i$, otherwise there would be an $x\in V_i\scj F$ and $j\neq i$ such that $x\in U_j$, which would imply that $x\in V_j$, a contradiction. So, from the start, we may assume that the family $\{U_i\}_{i=1}^n$ is pairwise disjoint. Defining $f=\sum_{i=1}^n r_i1_{U_i}$, we  obtain that $f\in\Lc(X,R)$ and $\Phi(f)=g$.
\end{proof}

\begin{lemma}\label{ideal.not.field}
  Let $X$ be a Stone space. Given an ideal $J$ of $\Lc(X,R)$, we let $F$ be the closed subset of $X$ given by 
\begin{displaymath}
	F=\{x \in X \, \mid \, f(x)=0, \ \forall f \in J \}.
\end{displaymath}
    Also, let $I(F)$ be the ideal given by
    \begin{displaymath}
	I(F)= \{ f \in \Lc(X,R) \, \mid \, f(x) = 0, \, \forall x \in F \}.
\end{displaymath}
Then, $I(F)=J$ if and only if for every $x\in X\setminus F$, there is $f\in J$ such that $f(x)=1$.
\end{lemma}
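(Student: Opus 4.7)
The plan is to prove the two directions separately, using the hypothesis on the right-hand side to promote individual pointwise values into characteristic functions of compact-open sets that lie in $J$.

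For the ``only if'' direction, I would fix $x \in X \setminus F$. By definition of $F$, there exists $g \in J$ with $g(x) \neq 0$. Setting $V = g^{-1}(g(x))$, the function $g$ being locally constant with compact support makes $V$ a compact-open subset of $X$ containing $x$. For any $y \in V$ we have $g(y) = g(x) \neq 0$, so $y \notin F$; hence $V \cap F = \emptyset$, which means $1_V \in I(F) = J$. Since $1_V(x) = 1$, this produces the required element.

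For the ``if'' direction, the inclusion $J \subseteq I(F)$ is immediate from the definition of $F$. For the reverse inclusion, I would take $h \in I(F)$ and write $h = \sum_{i=1}^n r_i 1_{V_i}$ with the $V_i$ pairwise disjoint compact-open sets, each disjoint from $F$ since $h$ vanishes on $F$. The key reduction is to show that $1_{V_i} \in J$ for each $i$, because then $h \in J$ as an $R$-linear combination. Given $x \in V_i \subseteq X \setminus F$, the hypothesis provides $f_x \in J$ with $f_x(x) = 1$; setting $W_x = f_x^{-1}(1)$, this is a compact-open neighborhood of $x$, and the identity $f_x \cdot 1_{W_x} = 1_{W_x}$ (valid pointwise) shows $1_{W_x} \in J$ since $J$ is an ideal of $\Lc(X,R)$.

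I would then use compactness of $V_i$ to extract finitely many points $x_1,\ldots,x_k$ with $V_i \subseteq W_{x_1} \cup \cdots \cup W_{x_k}$. Via the Boolean-algebra identity $1_{A \cup B} = 1_A + 1_B - 1_A \cdot 1_B$ applied inductively, the function $1_{W_{x_1} \cup \cdots \cup W_{x_k}}$ lies in $J$, and finally $1_{V_i} = 1_{V_i} \cdot 1_{W_{x_1} \cup \cdots \cup W_{x_k}} \in J$. The only mild subtlety — and the main place where one must be careful — is ensuring that the standard normal form $h = \sum r_i 1_{V_i}$ with compact-open disjoint $V_i$ is available over a general commutative ring $R$; this is just the disjointification used in the proof of Proposition~\ref{almoco} and causes no real trouble, so the argument goes through cleanly.
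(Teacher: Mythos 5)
Your proof is correct and follows essentially the same strategy as the paper's: promote the pointwise condition $f(x)=1$ to characteristic functions $1_{W_x}\in J$ using the ideal property, then use compactness of the level sets of $h$ to conclude $1_{V_i}\in J$. The only cosmetic differences are that the paper gets the compact-open neighborhood in the forward direction directly from openness of $X\setminus F$ rather than from a level set of some $g\in J$, and in the converse it disjointifies the covering sets inside $V_i$ via the Boolean algebra of compact-opens instead of your inclusion--exclusion step; both variants are equally valid.
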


\begin{proof}
    Suppose first that $I(F)=J$. Since $X\setminus F$ is open, given $x\in X\setminus F$, there is a compact-open neighborhood $V$ of $x$ such that $V\scj X\setminus F$. Then $1_V\in I(F)=J$ and $1_V(x)=1$.

    For the converse, we adapt the proof of \cite[Lemma~4.2]{BeuterIdeal}. Clearly, $J\scj I(F)$. Let $f\in I(F)$ and note that we can write $f=\sum_{i=1}^nr_i1_{V_i}$, where $r_i\in R$ and $V_i\scj X\setminus F$ is compact-open for $i=1,\ldots,n$. By hypothesis, for each $i=1,\ldots,n$ and $x\in V_i$, there exists $f_{i,x}\in J$ such that $f_{i,x}(x)=1$. Since $f_{i,x}$ is locally constant and $J$ is an ideal, by multiplying by a suitable characteristic function, we may assume that $f_{i,x}=1_{U_{i,x}}$ for some compact-open set $U_{i,x}\scj V_i$. Note that for any compact-open set $U$ contained in some $U_{i,x}$, we again have that $1_U\in J$. Using that $V_i$ is compact and that compact-open sets form a Boolean algebra, we can write $V_i$ as a disjoint union of compact-open sets $U_{i,1},\cdots,U_{i,n_i}$ such that $1_{U_{i,j}}\in J$ for all $j=1,\ldots,n_i$. Hence $1_{V_i}=\sum_{j=1}^{n_i}1_{U_{i,j}}\in J$, from where it follows that $f\in J$.
\end{proof}

The two lemmas above allow us to identify quotients of \(\Lc(X, R)\) for a given Stone space \(X\).

\begin{proposition}\label{quotient.not.field}
    Let $X$ be a Stone space and $J$ an ideal of $\Lc(X,R)$. Let $F$ be as in Lemma~\ref{ideal.not.field} and suppose that for every $x\in X\setminus F$, there is $f\in J$ with $f(x)=1$. Then $\Lc(X,R)/J\cong \Lc(F,R)$.
\end{proposition}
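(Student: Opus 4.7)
The plan is to assemble the two preceding lemmas into a quick application of the first isomorphism theorem. First, I would invoke Lemma~\ref{lemma.Phi.surjective} on the closed set $F\subseteq X$ to obtain the surjective $R$-algebra homomorphism
\[
\Phi:\Lc(X,R)\to\Lc(F,R),\qquad \Phi(f)=f|_F.
\]
Since $F$ is Stone by the same lemma, the codomain makes sense. Thus it suffices to show that $\ker\Phi=J$, for then $\Lc(X,R)/J\cong\Lc(F,R)$ by the first isomorphism theorem.

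The kernel of $\Phi$ is exactly the ideal
\[
I(F)=\{f\in\Lc(X,R) : f(x)=0 \text{ for all } x\in F\},
\]
directly from the definition of $\Phi$ as restriction. Now I would apply Lemma~\ref{ideal.not.field}: the hypothesis of Proposition~\ref{quotient.not.field} is precisely the ``for every $x\in X\setminus F$ there is $f\in J$ with $f(x)=1$'' condition in the statement of that lemma, and $F$ is defined in the same way from $J$. Therefore Lemma~\ref{ideal.not.field} yields $I(F)=J$, so $\ker\Phi=J$.

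Combining these two observations, $\Phi$ factors through an isomorphism $\Lc(X,R)/J\xrightarrow{\sim}\Lc(F,R)$, which is the desired conclusion. There is essentially no obstacle here: the content is entirely absorbed by the two preceding lemmas, and the proof is a formal composition. The only point meriting a sentence of verification is that the $F$ appearing in the hypothesis coincides with the one appearing in Lemma~\ref{ideal.not.field}, so that the two lemmas may be chained without ambiguity; this is immediate from the definitions.
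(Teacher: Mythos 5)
Your proposal is correct and follows exactly the paper's own argument: apply Lemma~\ref{lemma.Phi.surjective} to get the surjective restriction map $\Phi$, identify $\ker\Phi=I(F)$, use Lemma~\ref{ideal.not.field} to get $I(F)=J$, and conclude by the first isomorphism theorem. No differences worth noting.
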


\begin{proof}
    By Lemma~\ref{lemma.Phi.surjective}, the map $\Phi:\Lc(X,R)\to \Lc(F,R)$ given by $\Phi(f)=f|_F$ is a surjective $R$-algebra homomorphism. Note that $\ker(\Phi)=I(F)$, where $I(F)$ is defined in Lemma~\ref{ideal.not.field}. The result now follows from Lemma~\ref{ideal.not.field} and the first isomorphism theorem.
\end{proof}

\begin{remark}\label{starving}
If \(R\) is a field, then Lemma~\ref{ideal.not.field} and Proposition~\ref{quotient.not.field} hold without the need of adding the condition on \(J\) that for every \(x \in X \setminus F\), there exists \(f \in J\) such that \(f(x) = 1\). Indeed, for every \(x \in X \setminus F\), by the definition of \(F\), there exists \(f \in J\) such that \(f(x) \neq 0\). Multiplying \(f\) by \(f(x)^{-1}\), we obtain a new function \(g \in J\) such that \(g(x) = 1\).
\end{remark}

We conclude this section by proving that when the base ring is indecomposable, the hypothesis of Proposition~\ref{quotient.not.field} is satisfied if $J$ is the kernel of a homomorphism between the algebras of locally constant functions of two Stone spaces. This will be the case in Section~\ref{s:subshift}.

\begin{lemma}\label{bottle}
   Suppose \( R \) is an indecomposable ring. Let \( X \) and \( Y \) be compact Stone spaces, and \( \phi: \Lc(X, R) \to \Lc(Y, R) \) a unital \( R \)-algebra homomorphism. Define 
\[
F := \{ x \in X : f(x) = 0 \text{ for all } f \in \ker(\phi) \}.
\]
Then, for every \( x \in X \setminus F \), there exists \( f \in \ker(\phi) \) such that \( f(x) = 1 \). Moreover, if \( \phi \) is surjective, then \( F \), with the subspace topology, is homeomorphic to \( Y \).

\end{lemma}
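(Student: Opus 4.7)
My strategy hinges on first isolating a general fact: under the indecomposability hypothesis, for any Stone space $Z$ the idempotents of $\Lc(Z,R)$ are exactly the characteristic functions $1_V$ of compact-open sets $V\scj Z$. Indeed, writing any $f\in\Lc(Z,R)$ as $f=\sum_{i=1}^n r_i 1_{V_i}$ with pairwise disjoint compact-open $V_i$ and $r_i\ne 0$ (possible since a locally constant function of compact support takes only finitely many nonzero values), evaluating $f^2=f$ at a point of $V_j$ gives $r_j^2=r_j$, and indecomposability of $R$ forces $r_j=1$.

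For the first assertion, let $x\in X\setminus F$. By definition of $F$ there is some $g\in\ker\phi$ with $g(x)\ne 0$. Decompose $g=\sum_{i=1}^n r_i 1_{V_i}$ as above and let $j$ be the unique index with $x\in V_j$, so $r_j=g(x)\ne 0$. Then $g\cdot 1_{V_j}=r_j 1_{V_j}\in\ker\phi$, and applying $\phi$ yields $r_j\phi(1_{V_j})=0$ in $\Lc(Y,R)$. By the observation above, $\phi(1_{V_j})=1_W$ for some compact-open $W\scj Y$. If $W$ were non-empty, evaluating $r_j 1_W=0$ at a point of $W$ would give $r_j=0$, contradicting $r_j\ne 0$. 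Hence $W=\emptyset$, which means $1_{V_j}\in\ker\phi$, and this is the desired function taking the value $1$ at $x$.

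For the second assertion, assume $\phi$ is surjective. By Lemma~\ref{lemma.Phi.surjective} the closed set $F$ is itself a (compact) Stone space. Combining the first part with Proposition~\ref{quotient.not.field} applied to $J=\ker\phi$ gives $\Lc(X,R)/\ker\phi\cong\Lc(F,R)$, while surjectivity of $\phi$ and the first isomorphism theorem give $\Lc(X,R)/\ker\phi\cong\Lc(Y,R)$. Composing yields an $R$-algebra isomorphism $\psi:\Lc(F,R)\to\Lc(Y,R)$. Restricting $\psi$ to idempotents and applying the characterization of the first paragraph to both $F$ and $Y$ produces a Boolean algebra isomorphism between the compact-open subsets of $F$ and those of $Y$; by (generalized) Stone duality for compact Stone spaces this lifts to the desired homeomorphism $Y\cong F$. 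The only genuine obstacle is ensuring that $\phi(1_{V_j})$ is itself a characteristic function, which is precisely where the indecomposability of $R$ is indispensable; once the idempotent characterization is in hand, the rest is a clean assembly of the preceding results together with Stone duality.
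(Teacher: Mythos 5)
Your proof is correct, but it follows a genuinely different route from the paper's. The paper's proof is point-set theoretic: using indecomposability it adapts \cite[Proposition~2.13]{BCGWSubshift} to produce a continuous map $h:Y\to X$ with $\phi(f)=f\circ h$, identifies $F=h(Y)$ (which immediately gives the first claim, via a characteristic function supported off the closed set $h(Y)$), and, when $\phi$ is surjective, shows $h$ is injective so that the co-restriction $h|^F:Y\to F$ is a continuous bijection from a compact space to a Hausdorff one, hence a homeomorphism. You instead stay entirely on the algebraic side: indecomposability pins down the idempotents of $\Lc(Z,R)$ as the characteristic functions of compact-open sets, the first claim follows by cutting a kernel element down to an idempotent $1_{V_j}\in\ker\phi$ (and your observation that this is exactly where indecomposability is indispensable is accurate), and the second claim is assembled from Proposition~\ref{quotient.not.field}, the first isomorphism theorem, and Stone duality applied to the induced isomorphism of the Boolean algebras of idempotents — all steps that check out, since the Boolean operations on idempotents are expressible in the ring operations and $F$, $Y$ are compact Stone spaces. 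What each approach buys: the paper's argument yields an explicit point-level homeomorphism $h|^F$ dual to $\phi$, which is the kind of concrete map later exploited in Proposition~\ref{homeomorphism}; your argument avoids re-deriving the dual map and reuses the machinery already built in Section~3 (Lemma~\ref{lemma.Phi.surjective}, Lemma~\ref{ideal.not.field}, Proposition~\ref{quotient.not.field}), at the price of obtaining the homeomorphism only abstractly through Stone duality — which is all the statement of the lemma requires.
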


\begin{proof}

Since \( R \) is indecomposable, we can adapt the proof of \cite[Proposition~2.13]{BCGWSubshift} to find a continuous function \( h: Y \to X \) such that \(\phi(f) = f \circ h\) for all \( f \in \Lc(X, R) \). Given that \( Y \) is compact, the image \( h(Y) \) is closed in \( X \). We claim that \( F = h(Y) \).

First, if \( x = h(y) \) for some \( y \in Y \), then for every \( f \in \ker(\phi) \), we have \( f(x) = f(h(y)) = \phi(f)(y) = 0 \). Conversely, if \( x \notin h(Y) \), then since \( h(Y) \) is closed, there exists a compact-open neighborhood \( U \) of \( x \) such that \( U \subseteq X \setminus h(Y) \). In this case, \( 1_U \in \ker(\phi) \) and \( 1_U(x) = 1 \neq 0 \), so \( x \notin F \). Hence, \( F = h(Y) \), and the first part of the statement follows.

Now, suppose \( \phi \) is surjective, and let us show that \( h \) is injective. Take \( y_1, y_2 \in Y \) with \( y_1 \neq y_2 \). Since \( Y \) is a Stone space, there exists a compact-open subset \( V \) of \( Y \) such that \( y_1 \in V \) and \( y_2 \notin V \). Because \( \phi \) is surjective, there exists \( f \in \Lc(X, R) \) such that \( \phi(f) = 1_V \). Thus, \( f(h(y_1)) = 1_V(y_1) = 1 \) and \( f(h(y_2)) = 1_V(y_2) = 0 \), implying \( h(y_1) \neq h(y_2) \). Therefore, the co-restriction \( h|^F: Y \to F \) is bijective. Since \( Y \) is compact and \( F \) is Hausdorff, \( h|^F \) is a homeomorphism.
\end{proof}

\section{Topological characterization of Partial Group Algebras}

As in Section~\ref{greve}, let $G$ be a group, let $\mathcal{R}$ be a set of relations on the set $\{\varepsilon_g:g\in G\}$, let $R$ a nonzero unital commutative ring, and consider the partial group algebras $R_{\operatorname{par}}(G)$ and $R_{\operatorname{par}}(G, \mathcal{R})$, as well as their $R$-subalgebras $A_{\operatorname{par}}(G)$ and $A_{\operatorname{par}}(G,\mathcal{R})$. The main goal of this section is to describe $A_{\operatorname{par}}(G,\mathcal{R})$ as the set of locally constant functions on some topological space and use it to obtain a topological partial action whose associated partial skew group ring is isomorphic to $R_{\operatorname{par}}(G, \mathcal{R})$. In particular, the same characterization holds for $A_{\operatorname{par}}(G)$ and $R_{\operatorname{par}}(G)$.

Recall that there is a canonical bijection between the power set of $G$ and $\{0,1\}^G$. Give $\{0,1\}$ the discrete topology. Then we topologize the power set of $G$ by pulling back the product topology on $\{0,1\}^G$ via the aforementioned bijection.  

Let $X_G = \{\xi\subseteq G : e\in\xi\}$ and $X_g = \{\xi\in X_G : g\in\xi\}$ for each $g\in G$. Observe that $X_G$ and $X_g$ are compact with the subspace topology inherited from the power set of $G$. For each $g\in G$, denote by $1_g\in \Lc(X_G,R)$ the characteristic function of $X_g$.

\begin{proposition}\label{iso.theta}
    Assume that $R$ is an indecomposable ring. Then, there is an isomorphism $\Theta:A_{\operatorname{par}}(G) \to \Lc(X_G,R)$ such that $\Theta(\varepsilon_g)=1_g$ for all $g\in G$.
\end{proposition}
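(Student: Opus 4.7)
The plan is to combine the universal characterization of $A_{\operatorname{par}}(G)$ from Proposition~\ref{AparG.universal} with the Stone-dual description given by Proposition~\ref{almoco}, and then identify the abstract Stone space $\widehat{E(A_{\operatorname{par}}(G))}$ with the explicit space $X_G$. The existence of $\Theta$ is immediate: since $X_e = X_G$ we have $1_e = 1$ in $\Lc(X_G, R)$, and the $1_g$'s are pairwise commuting idempotents, so Proposition~\ref{AparG.universal} yields a unital $R$-algebra homomorphism $\Theta\colon A_{\operatorname{par}}(G) \to \Lc(X_G, R)$ with $\Theta(\varepsilon_g) = 1_g$. The content is in showing that $\Theta$ is bijective.

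First I would apply Proposition~\ref{almoco} with $S = \{\varepsilon_g : g \in G\}$ (whose universal property is exactly Proposition~\ref{AparG.universal}) to obtain an isomorphism $\Psi\colon A_{\operatorname{par}}(G) \to \Lc(\widehat{E(A_{\operatorname{par}}(G))}, R)$ with $\Psi(\varepsilon_g) = 1_{O_{\varepsilon_g}}$. It then suffices to produce a homeomorphism $\omega\colon X_G \to \widehat{E(A_{\operatorname{par}}(G))}$ with $\omega^{-1}(O_{\varepsilon_g}) = X_g$ for every $g \in G$, since pulling back along $\omega$ induces an isomorphism $\Lc(\widehat{E(A_{\operatorname{par}}(G))}, R) \to \Lc(X_G, R)$ sending each $1_{O_{\varepsilon_g}}$ to $1_g$, which composed with $\Psi$ agrees with $\Theta$ on generators.

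To construct $\omega$, the indecomposability of $R$ enters crucially. Given $\xi \in X_G$, the assignment $\varepsilon_g \mapsto 1$ if $g \in \xi$ and $\varepsilon_g \mapsto 0$ otherwise extends, by Proposition~\ref{AparG.universal}, to a unital $R$-algebra homomorphism $\chi_\xi\colon A_{\operatorname{par}}(G) \to R$, and indecomposability forces $\chi_\xi(E(A_{\operatorname{par}}(G))) \subseteq \{0,1\}$. Hence $\omega(\xi) := \{e \in E(A_{\operatorname{par}}(G)) : \chi_\xi(e) = 1\}$ is an ultrafilter in $E(A_{\operatorname{par}}(G))$. The candidate inverse sends an ultrafilter $\eta$ to $\{g \in G : \varepsilon_g \in \eta\}$, which lies in $X_G$ because $\varepsilon_e = 1 \in \eta$; that these are mutually inverse follows from the fact—implicit in the disjointification step of the proof of Proposition~\ref{almoco}—that $E(A_{\operatorname{par}}(G))$ is generated as a Boolean algebra by the $\varepsilon_g$'s. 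The identity $\omega^{-1}(O_{\varepsilon_g}) = X_g$ is then immediate, which gives continuity of $\omega$ because the $O_{\varepsilon_g}$'s form a subbasis for the Stone-dual topology; since $\widehat{E(A_{\operatorname{par}}(G))}$ is compact (as $\varepsilon_e = 1$ is a top element of $E(A_{\operatorname{par}}(G))$) and $X_G$ is compact Hausdorff, a continuous bijection between them is automatically a homeomorphism.

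The main obstacle is this Stone-space identification. Indecomposability of $R$ is essential for forcing each character $\chi_\xi$ to take values in $\{0,1\}$ on idempotents, so that subsets of $G$ really do correspond to ultrafilters; conversely, the absence of any relations on the $\varepsilon_g$'s beyond commutation and idempotency is precisely what guarantees that every $\xi \in X_G$ yields a nonzero homomorphism $\chi_\xi$, so the assignment $\xi \mapsto \omega(\xi)$ is defined on all of $X_G$ and not just on a proper subset.
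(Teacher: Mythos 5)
Your strategy is close to the paper's, but not identical: the paper also obtains $\Theta$ from Proposition~\ref{AparG.universal} and $\Psi$ from Proposition~\ref{almoco}, yet it never proves that $X_G$ and $\widehat{E(A_{\operatorname{par}}(G))}$ are homeomorphic. Instead it only constructs the continuous map $h\colon\widehat{E(A_{\operatorname{par}}(G))}\to X_G$, $\mathcal{F}\mapsto\{g:\varepsilon_g\in\mathcal{F}\}$ (the inverse of your $\omega$), pulls it back to $\Lambda\colon\Lc(X_G,R)\to\Lc(\widehat{E(A_{\operatorname{par}}(G))},R)$, and checks on generators that $\Psi^{-1}\circ\Lambda$ is a two-sided inverse of $\Theta$, using that the $1_g$ generate $\Lc(X_G,R)$. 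Continuity in that direction is the easy one, because the single-coordinate cylinder sets really are a subbasis of the product topology on $X_G$. Your route proves more (the explicit homeomorphism), which is genuinely useful later (compare Proposition~\ref{homeomorphism}), but it makes you shoulder the continuity of $\omega$, and that is where your write-up has a flaw.

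The step that does not hold as stated is the claim that the sets $O_{\varepsilon_g}$ form a subbasis of the Stone topology on $\widehat{E(A_{\operatorname{par}}(G))}$. The basis of that topology consists of \emph{all} $O_e$ with $e\in E(A_{\operatorname{par}}(G))$, and already $O_{1-\varepsilon_g}=O_{\varepsilon_g}^c$ (for $g\neq e$) is not a union of finite intersections of the $O_{\varepsilon_h}$'s: such an intersection is $O_p$ with $p$ a product of $\varepsilon_h$'s (or $p=1$), and $O_p\subseteq O_{\varepsilon_g}^c$ would force $p\,\varepsilon_g=0$, which never happens in $A_{\operatorname{par}}(G)$ (the character $\chi_\xi$ with $\xi$ containing $e$, $g$ and the relevant $h$'s sends $p\,\varepsilon_g$ to $1$), while $O_{1-\varepsilon_g}\neq\emptyset$ (use $\chi_{\{e\}}$). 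So verifying $\omega^{-1}(O_{\varepsilon_g})=X_g$ alone does not yield continuity. The gap is repairable with a fact you already invoke for injectivity/surjectivity: by indecomposability (via the proof of Proposition~\ref{almoco}), every $e\in E(A_{\operatorname{par}}(G))$ is a finite Boolean combination of the $\varepsilon_g$'s, and since $O_{e\wedge f}=O_e\cap O_f$, $O_{e\vee f}=O_e\cup O_f$, $O_{1-e}=O_e^c$ and preimages respect these operations, $\omega^{-1}(O_e)$ is the corresponding Boolean combination of the clopen sets $X_g$, hence open. With that correction, the remaining steps (the $\{0,1\}$-valued characters via indecomposability, the ultrafilter and mutual-inverse verifications, compactness of $\widehat{E(A_{\operatorname{par}}(G))}$ from $\varepsilon_e=1$ and of $X_G$ as a closed subset of $\{0,1\}^G$, and the upgrade of a continuous bijection to a homeomorphism) are correct, and your proof goes through.
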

\begin{proof}

By the universality of $A_{\operatorname{par}}(G)$, as given by Proposition~\ref{AparG.universal}, there exists an $R$-algebra homomorphism $\Theta: A_{\operatorname{par}}(G) \to \Lc(X_G, R)$ such that $\Theta(\varepsilon_g) = 1_g$ for each $g \in G$.

To construct the inverse, note that there is a continuous function $h: \reallywidehat{E(A_{\operatorname{par}}(G))} \to X_G$ given by
\[
h(\mathcal{F}) = \{g \in G : \varepsilon_g \in \mathcal{F}\}.
\]
This induces a homomorphism $\Lambda: \Lc(X_G, R) \to \Lc(\reallywidehat{E(A_{\operatorname{par}}(G))}, R)$ defined by $\Lambda(f) = f \circ h$. For $\mathcal{F} \in \reallywidehat{E(A_{\operatorname{par}}(G))}$, we have
\[
\Lambda(1_g)(\mathcal{F}) = 1 \Leftrightarrow g \in h(\mathcal{F}) \Leftrightarrow \varepsilon_g \in \mathcal{F} \Leftrightarrow \mathcal{F} \in O_{\varepsilon_g}.
\]
Thus, $\Lambda(1_g) = 1_{O_{\varepsilon_g}}$.

Using the universality of $A_{\operatorname{par}}(G)$ given in Proposition~\ref{AparG.universal}, and Proposition~\ref{almoco}, we obtain an isomorphism $\Psi:A_{\operatorname{par}}(G)\to \Lc(\reallywidehat{E(A_{\operatorname{par}}(G))},R)$ such that $\Psi(\varepsilon_g) = 1_{O_{\varepsilon_g}}$. Therefore, the composition $\Psi^{-1}\circ \Lambda:\Lc(X_G,R)\to A_{\operatorname{par}}(G)$ satisfies $\Psi^{-1}\circ \Lambda(1_g)=\varepsilon_g$ for all $g\in G$.

Since $\Lc(X_G, R)$ is generated by the functions $1_g$ for $g \in G$, we conclude that $\Psi^{-1} \circ \Lambda = \Theta^{-1}$.
\end{proof}

Assuming that $R$ is indecomposable, and applying the  isomorphism $\Theta:A_{\operatorname{par}}(G) \to \Lc(X_G,R)$ given in Proposition~\ref{iso.theta} to each relation in $\mathcal{R}$, we obtain the set
$$\Theta(\mathcal{R}) = \Big\{\sum_i\lambda_i\prod_j1_{g_{ij}} : \sum_i\lambda_i\prod_j\varepsilon_{g_{ij}} \in \mathcal{R}\Big\}.$$
Define
$$\Omega_{\mathcal{R}} = \{\xi\in X_G : f(g^{-1}\xi)=0, \forall\,f\in\Theta(\mathcal{R}) \mbox{ and } g\in \xi\},$$
which is a closed set of $X_G$. Consider the surjective homomorphism $\Phi:\Lc(X_G,R)\to\Lc(\Omega_{\mathcal{R}},R)$ given by Lemma~\ref{lemma.Phi.surjective}. Using the notation of Lemma~\ref{ideal.not.field}, $\ker \Phi = I(\Omega_{\mathcal{R}})$.

Let $\rho: A_{\operatorname{par}}(G) \to A_{\operatorname{par}}(G, \mathcal{R})$ be the canonical homomorphism from Section~\ref{greve}. Recall that its kernel, $\ker \rho$, is the smallest $\theta$-invariant ideal of $A_{\operatorname{par}}(G)$ that contains $\mathcal{R}$ (Proposition~\ref{pressaoalta}) and is the ideal generated by $\mathcal{R}_{\operatorname{inv}}$. Using the isomorphism $\Theta$ above, we may consider the ideal $\Theta(\ker \rho)$ in $\Lc(X_G, R)$.

\begin{proposition}\label{knot}
    Suppose that $R$ is an indecomposable ring. Then, $\{\xi\in X_G \, \mid \, f(\xi)=0, \ \forall f \in \Theta(\ker \rho)\} = \Omega_{\mathcal{R}}$.
\end{proposition}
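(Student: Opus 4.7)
The plan is to reduce the claim to a direct computation on a generating set. Recall from the discussion following Proposition~\ref{pressaoalta} that $\ker\rho$ is the ideal of $A_{\operatorname{par}}(G)$ generated by $\mathcal{R}_{\operatorname{inv}}$, so, since $\Theta$ is an $R$-algebra isomorphism, $\Theta(\ker\rho)$ is the ideal of $\Lc(X_G,R)$ generated by $\Theta(\mathcal{R}_{\operatorname{inv}})$. A routine observation, namely that any $f \in \Theta(\ker\rho)$ is a finite sum $\sum_i a_i s_i$ with $a_i \in \Lc(X_G,R)$ and $s_i \in \Theta(\mathcal{R}_{\operatorname{inv}})$, shows that an ideal and any generating set of it have the same common zero set. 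Hence it suffices to prove that $\{\xi \in X_G : s(\xi) = 0 \text{ for all } s \in \Theta(\mathcal{R}_{\operatorname{inv}})\} = \Omega_{\mathcal{R}}$.

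To verify this I would compute $\Theta$ on a typical generator. Given $r = \sum_i \lambda_i \prod_j \varepsilon_{g_{ij}} \in \mathcal{R}$, write $f := \Theta(r) = \sum_i \lambda_i \prod_j 1_{g_{ij}} \in \Theta(\mathcal{R})$, and fix $g \in G$. The corresponding generator $\varepsilon_g \sum_i \lambda_i \prod_j \varepsilon_{gg_{ij}}$ of $\mathcal{R}_{\operatorname{inv}}$ is sent by $\Theta$ to $1_g \cdot \sum_i \lambda_i \prod_j 1_{gg_{ij}}$. For $\xi \in X_G$ with $g \in \xi$, the set $g^{-1}\xi$ again contains $e$, so it lies in $X_G$, and the elementary identity $1_{gg_{ij}}(\xi) = 1_{g_{ij}}(g^{-1}\xi)$ shows that the value of this generator at $\xi$ equals $f(g^{-1}\xi)$. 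For $\xi$ with $g \notin \xi$, the factor $1_g(\xi)$ vanishes, so the value is $0$.

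Consequently, this generator vanishes at $\xi$ exactly when either $g \notin \xi$, or $g \in \xi$ and $f(g^{-1}\xi) = 0$. Imposing this vanishing simultaneously over every $g \in G$ and every $r \in \mathcal{R}$ recovers precisely the condition that $f(g^{-1}\xi) = 0$ for all $f \in \Theta(\mathcal{R})$ and all $g \in \xi$, which is exactly the defining condition of $\Omega_{\mathcal{R}}$. The only point requiring care is the case split on whether $g \in \xi$: this ensures $g^{-1}\xi$ is interpreted as a well-defined element of $X_G$ in the relevant case, while the automatic vanishing when $g \notin \xi$ is harmless, since the definition of $\Omega_{\mathcal{R}}$ quantifies only over $g \in \xi$. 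Beyond this bookkeeping, the argument is purely formal.
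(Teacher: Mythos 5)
Your proof is correct and follows essentially the same route as the paper: both reduce the statement to the generating set $\Theta(\mathcal{R}_{\operatorname{inv}})$ of the ideal $\Theta(\ker\rho)$ and then verify the equality pointwise via the identity $1_{gg_{ij}}(\xi)=1_{g_{ij}}(g^{-1}\xi)$, splitting on whether $g\in\xi$. Your explicit remark that an ideal and any generating set have the same common zero set simply makes precise a reduction the paper performs implicitly.
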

\begin{proof}
   
    Denote by $F$ the set on the left-hand side of the set equality in the proposition. In what follows, we use the notation $[S]$, which equals 1 if the statement $S$ is true and 0 otherwise.

 Let $\xi\in F$, $g\in\xi$ and $f=\sum_i\lambda_i\prod_j1_{g_{ij}}\in\Theta(\mathcal{R})$. Then,
\begin{align*}
    f(g^{-1}\xi) & = \sum_i\lambda_i\prod_j1_{g_{ij}}(g^{-1}\xi) \\
    & = \sum_i\lambda_i\prod_j[g_{ij}\in g^{-1}\xi] \\
    & = \sum_i\lambda_i\prod_j[gg_{ij}\in\xi] \\
    & =\sum_i\lambda_i\prod_j1_{gg_{ij}}(\xi) \\
     & = 1_g\sum_i\lambda_i\prod_j1_{gg_{ij}}(\xi) \\
    & = 0,
\end{align*}
since $g\in\xi$ and $1_g\sum_i\lambda_i\prod_j1_{gg_{ij}}\in \Theta(\ker \rho)$. This shows that $\xi\in \Omega_{\mathcal{R}}$.

Now, let $\xi\in\Omega_{\mathcal{R}}$ and $f \in \Theta(\ker \rho)$. Since $\ker \rho$ is generated by $\mathcal{R}_{\operatorname{inv}}$, we can write $f = 1_g\sum_i\lambda_i\prod_j1_{gg_{ij}}$.
    Note that 
    \begin{align*}
    f(\xi) & = 1_g\sum_i\lambda_i\prod_j1_{gg_{ij}}(\xi) \\
    & = [g\in\xi]\sum_i\lambda_i\prod_j[gg_{ij}\in\xi] \\
    & = [g\in\xi]\sum_i\lambda_i\prod_j[g_{ij}\in g^{-1}\xi] \\
    & = [g\in\xi]\sum_i\lambda_i\prod_j1_{g_{ij}}(g^{-1}\xi) \\
    & = 0,
\end{align*}
 where the last equality is clearly true if $g\notin\xi$. If $g\in\xi$, then $\sum_i\lambda_i\prod_j1_{g_{ij}}(g^{-1}\xi) = 0$ because $\sum_i\lambda_i\prod_j1_{g_{ij}}\in\Theta(\mathcal{R})$ and $\xi\in\Omega_{\mathcal{R}}$.
\end{proof}

Under the hypothesis that \(R\) is indecomposable, we have shown that 
\[
\Theta(\ker \rho) \subseteq I(\Omega_{\mathcal{R}}) = \ker \Phi.
\]
By Lemma~\ref{ideal.not.field}, this inclusion is an equality if and only if, for every \(\xi \in X_G \setminus \Omega_{\mathcal{R}}\), there exists an \(f \in \Theta(\ker \rho)\) such that \(f(\xi) = 1\). Assuming the latter condition (which always holds if $R$ is a field as observed in Remark~\ref{starving}), and applying Proposition~\ref{quotient.not.field}, we obtain an isomorphism 
\[
\Theta_{\mathcal{R}}: A_{\operatorname{par}}(G, \mathcal{R}) \to \Lc(\Omega_{\mathcal{R}}, R)
\]
such that \(\Theta_{\mathcal{R}}(\varepsilon_g) = 1_g\) for all \(g \in G\).  However, $A_{\operatorname{par}}(G, \mathcal{R})$ is not always isomorphic to $\Lc(\Omega_{\mathcal{R}}, R)$, as illustrated in the following example.

\begin{example}
    Suppose that $R$ is not a field and let $r\in R$ be nonzero and non-invertible. Consider the set of relations $\mathcal{R}=\{r\varepsilon_e\}$. By Proposition~\ref{pressaoalta}, and the comment after its proof, $\ker\rho$ is the ideal generated by $\{r\varepsilon_g:g\in G\}$. Using the isomorphism of Proposition~\ref{iso.theta}, we see that elements of $\ker\rho$ are of the form $rf$ for some $f\in\Lc(X_G,R)$. Since $r$ is not invertible, we have that $1\notin\ker\rho$ so that $\ker\rho$ is not the whole algebra. This implies that $A_{\operatorname{par}}(G,\mathcal{R})$ is not the zero algebra and that $r1=0$ in $A_{\operatorname{par}}(G,\mathcal{R})$. Hence $A_{\operatorname{par}}(G,\mathcal{R})$ cannot be isomorphic to $\Lc(X,R)$ for any Stone space $X$, as $r1_X\neq 0$ unless $X=\emptyset$.
\end{example}

Using the isomorphism $\Theta_{\mathcal{R}}$, we can translate the partial action \(\theta\) from Proposition~\ref{quefome} to the algebra \(\Lc(\Omega_{\mathcal{R}}, R)\). Furthermore, this partial action $\theta$ can be obtained from a topological partial action $\widehat{\theta}$ on $\Omega_{\mathcal{R}}$. We summarize this and the main results obtained so far in the following theorem.

\begin{theorem}\label{grevedosalunos}
    Let $R$ be a nonzero, unital,  commutative, indecomposable ring, $G$ be a group, and $\mathcal{R}$ a set of relations on the set $\{\varepsilon_g:g\in G\}$. Suppose that for every $\xi\in X_G\setminus \Omega_{\mathcal{R}}$, there is $f\in \Theta(\ker \rho)$ such that $f(\xi)=1$ (which holds when $R$ is a field, see Remark~\ref{starving}). Then:
    \begin{enumerate}[(i)]
        \item There is an isomorphism $\Theta_{\mathcal{R}}:A_{\operatorname{par}}(G,\mathcal{R}) \to \Lc(\Omega_{\mathcal{R}},R)$ such that $\Theta_{\mathcal{R}}(\varepsilon_g) = 1_g$ for all $g\in G$.
        \item  For each $g\in G$, let $\Omega_g = \{\xi\in \Omega_{\mathcal{R}} \mid g\in\xi\}$ and $\widehat{\theta}_g:\Omega_{g^{-1}}\to\Omega_{g}$ be given by $\widehat{\theta}_g(\xi)=g\xi$. Then $\widehat{\theta}$ is a topological partial action on $\Omega_{\mathcal{R}}$.
        \item For each $g\in G$, let $D_g = \{f\in \Lc(\Omega_{\mathcal{R}},R) \mid f(\xi) = 0, \, \forall \xi\notin\Omega_g\}$, denote by $1_g\in\Lc(\Omega_{\mathcal{R}},R)$ the characteristic function of $\Omega_g$, and define $\theta_g : D_{g^{-1}}\to D_g$ by $\theta_g(f) =1_{g^{-1}} (f\circ\widehat{\theta}_{g^{-1}})$. Then, $\theta$ is a partial action of $G$ on $\Lc(\Omega_{\mathcal{R}},R)$ such that $\theta_g(1_h1_{g^{-1}}) = 1_{gh}1_g$ for all $g,h\in G$. Moreover, $\theta$ is equivalent to the partial action of Theorem~\ref{quefome} under the isomorphism $\Theta_{\mathcal{R}}$.
        \item The map $$\begin{array}{rcl} R_{\operatorname{par}}(G,\mathcal{R}) & \to & \Lc(\Omega_{\mathcal{R}},R)\rtimes_\theta G \\ {[}{g}{]} & \mapsto & 1_g\delta_g \end{array}$$ is an isomorphism.
    \end{enumerate}

\end{theorem}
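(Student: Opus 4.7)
The plan is to handle the four assertions in order, drawing on the machinery developed immediately before the theorem statement. For part (i), I would chain three isomorphisms: the first isomorphism theorem applied to $\rho$ gives $A_{\operatorname{par}}(G,\mathcal{R}) \cong A_{\operatorname{par}}(G)/\ker\rho$; Proposition~\ref{iso.theta} transports this to $\Lc(X_G, R)/\Theta(\ker\rho)$; and by Proposition~\ref{knot} the vanishing set of $\Theta(\ker\rho)$ is exactly $\Omega_{\mathcal{R}}$, so the standing hypothesis combined with Lemma~\ref{ideal.not.field} yields $\Theta(\ker\rho) = I(\Omega_{\mathcal{R}})$, whence Proposition~\ref{quotient.not.field} identifies the quotient with $\Lc(\Omega_{\mathcal{R}}, R)$ via restriction of functions. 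Tracking $\varepsilon_g$ through each step confirms $\Theta_{\mathcal{R}}(\varepsilon_g) = 1_g$.

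For part (ii), the crucial verification is that $\widehat{\theta}_g$ maps $\Omega_{g^{-1}}$ into $\Omega_g$; everything else (bijectivity, continuity, the partial action axioms) reduces to standard manipulations with subsets of $G$ equipped with the product topology on $\{0,1\}^G$. Given $\xi \in \Omega_{g^{-1}}$, the set $g\xi$ contains $e$ (since $g^{-1} \in \xi$) and $g$ (since $e \in \xi$). To see $g\xi \in \Omega_{\mathcal{R}}$, take any $h \in g\xi$ and any $f \in \Theta(\mathcal{R})$: then $g^{-1}h \in \xi \in \Omega_{\mathcal{R}}$, and the defining condition of $\Omega_{\mathcal{R}}$ applied to the element $g^{-1}h$ of $\xi$ gives $f\bigl((g^{-1}h)^{-1}\xi\bigr) = 0$, which is exactly $f(h^{-1}(g\xi)) = 0$. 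The inverse of $\widehat{\theta}_g$ is visibly $\widehat{\theta}_{g^{-1}}$, continuity is immediate from the fact that left multiplication by $g$ is a homeomorphism of $\{0,1\}^G$, and the relation that $\widehat{\theta}_{gh}$ extends $\widehat{\theta}_g \circ \widehat{\theta}_h$ is a routine computation with subsets.

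Part (iii) is a direct translation of (ii) via the standard duality between topological and algebraic partial actions on zero-dimensional spaces: the ideal $D_g \subseteq \Lc(\Omega_{\mathcal{R}}, R)$ corresponds to $\Lc(\Omega_g, R)$ under restriction, and the pullback formula $\theta_g(f) = 1_{g^{-1}}(f \circ \widehat{\theta}_{g^{-1}})$ specializes to $\theta_g(1_h 1_{g^{-1}}) = 1_{gh} 1_g$ by evaluation at a point; matching this against the formula $\theta_g(\varepsilon_h \varepsilon_{g^{-1}}) = \varepsilon_{gh}\varepsilon_g$ of Proposition~\ref{quefome} on the generators of each domain ideal $D_{g^{-1}}$ yields equivalence of the two partial actions under $\Theta_{\mathcal{R}}$. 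For part (iv), this equivalence induces an isomorphism of partial skew group rings $A_{\operatorname{par}}(G,\mathcal{R}) \rtimes G \cong \Lc(\Omega_{\mathcal{R}}, R) \rtimes G$ sending $\varepsilon_g \delta_g \mapsto 1_g \delta_g$, and precomposing with the isomorphism of Proposition~\ref{quefome} produces the desired map $[g] \mapsto 1_g \delta_g$. The main obstacle I anticipate is the set-theoretic bookkeeping in part (ii) verifying $\widehat{\theta}_g$-invariance of $\Omega_{\mathcal{R}}$ directly from its definition; once that is in place, parts (i), (iii), and (iv) are formal consequences of results already in the excerpt.
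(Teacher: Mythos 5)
Your proposal is correct and follows essentially the same route as the paper, whose proof of this theorem is precisely the discussion preceding it: part (i) via the chain $A_{\operatorname{par}}(G,\mathcal{R})\cong A_{\operatorname{par}}(G)/\ker\rho\cong\Lc(X_G,R)/\Theta(\ker\rho)\cong\Lc(\Omega_{\mathcal{R}},R)$ using Proposition~\ref{knot}, Lemma~\ref{ideal.not.field} and Proposition~\ref{quotient.not.field}, and parts (ii)--(iv) by transporting the partial action of Proposition~\ref{quefome} through $\Theta_{\mathcal{R}}$ and recognizing it as the dual of $\widehat{\theta}$. Your explicit verification that $\widehat{\theta}_g$ carries $\Omega_{g^{-1}}$ into $\Omega_g$ (i.e.\ that $\Omega_{\mathcal{R}}$ is invariant) correctly supplies a detail the paper leaves implicit.
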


We can use Theorem~\ref{grevedosalunos} to prove that $R_{\operatorname{par}}(G,\mathcal{R})$ is isomorphic to the Steinberg algebra of the transformation groupoid $G\ltimes_{\widehat{\theta}}\Omega_{\mathcal{R}}$ (see \cite{MR3743184} for more details). This can be seen as a generalization of \cite[Corollary~2.7]{MEP}, which describes $R_{\operatorname{par}}(G)$ as a groupoid algebra when $G$ is finite.

\begin{corollary}\label{SteinbergAgain}
    Under the hypothesis of Theorem~\ref{grevedosalunos}, the algebra $R_{\operatorname{par}}(G,\mathcal{R})$ is isomorphic to the Steinberg algebra $\mathcal{A}_R(G\ltimes_{\widehat{\theta}}\Omega_{\mathcal{R}})$.
\end{corollary}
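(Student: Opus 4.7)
The plan is to split the claim into two isomorphisms and compose them. First, Theorem~\ref{grevedosalunos}(iv) already provides an isomorphism $R_{\operatorname{par}}(G,\mathcal{R})\cong \Lc(\Omega_{\mathcal{R}},R)\rtimes_\theta G$ sending $[g]\mapsto 1_g\delta_g$. It therefore suffices to identify the right-hand side with the Steinberg algebra $\mathcal{A}_R(G\ltimes_{\widehat{\theta}}\Omega_{\mathcal{R}})$ of the transformation groupoid of $\widehat{\theta}$.

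For the second step, I would first verify that this transformation groupoid is Hausdorff and ample. Indeed, $\Omega_{\mathcal{R}}$ is a Stone space, being a closed subspace of the compact Stone space $X_G$; each $\Omega_g$ is compact-open in $\Omega_{\mathcal{R}}$; and each $\widehat{\theta}_g\colon \Omega_{g^{-1}}\to\Omega_g$ from Theorem~\ref{grevedosalunos}(ii) is a homeomorphism between compact-open sets. Hence $G\ltimes_{\widehat{\theta}}\Omega_{\mathcal{R}}$ has a basis of compact-open bisections of the form $\{g\}\times U$ with $g\in G$ and $U\subseteq\Omega_{g^{-1}}$ compact-open. The standard identification (see \cite{MR3743184}) then yields an $R$-algebra isomorphism $\Lc(\Omega_{\mathcal{R}},R)\rtimes_\theta G\to \mathcal{A}_R(G\ltimes_{\widehat{\theta}}\Omega_{\mathcal{R}})$, defined on generators by $1_U\delta_g\mapsto 1_{\{g\}\times \widehat{\theta}_{g^{-1}}(U)}$ for compact-open $U\subseteq \Omega_g$. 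Composing with the isomorphism of Theorem~\ref{grevedosalunos}(iv) produces the desired map, which sends $[g]$ to the characteristic function of the compact-open bisection $\{g\}\times \Omega_{g^{-1}}$.

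The main obstacle is essentially bookkeeping: matching conventions for the transformation groupoid (direction of composition, which of $\xi$ and $\widehat{\theta}_g(\xi)$ is the source of the arrow $(g,\xi)$) and verifying that the assignment on generators extends to a homomorphism respecting the convolution product on the Steinberg side and the twisted multiplication on the partial skew group ring side. Since this correspondence between partial skew group rings of topological partial actions on ample spaces and Steinberg algebras of the associated transformation groupoids is well documented in the literature, the proof reduces to assembling Theorem~\ref{grevedosalunos}(iv) with the cited identification, adjusting conventions as needed.
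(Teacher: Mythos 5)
Your proposal is correct and follows the same route as the paper, which simply combines Theorem~\ref{grevedosalunos} with the standard identification of a partial skew group ring of a topological partial action on a Stone space with the Steinberg algebra of its transformation groupoid (\cite[Theorem~3.2]{MR3743184}). The extra verifications you sketch (ampleness of the groupoid, the formula on generators) are exactly what that cited theorem encapsulates, so no gap remains.
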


\begin{proof}
    It follows from Theorem~\ref{grevedosalunos} and \cite[Theorem~3.2]{MR3743184}.
\end{proof}

\section{Application to Subshift algebras}\label{s:subshift}

In this section, we show that the subshift algebras defined in \cite{BCGWSubshift} can be realized as partial group algebras with relations. For the reader's convenience, we restate the definition of a subshift and its algebra, along with basic properties of the algebra. For further details, see \cite{BCGWSubshift}. 

Fix a nonzero unital commutative ring $R$. Let $\N=\{0,1,2,3,\ldots\}$ and let $\alf$ be a non-empty set, called an \emph{alphabet}. The \emph{shift map} on $\alf^\N$ is the map $\sigma: \alf^\N\to \alf^\N$ given by $\sigma(x)_n=x_{n+1}$. Elements of $\alf^*:=\bigcup_{k=0}^\infty \alf^k$ are called \emph{blocks} or \emph{words}, and $\omega$ denotes the empty word. The length of $\alpha\in\alf^*\cup\alf^{\N}$  is denoted by $|\alpha|$. If $\beta\in\alf^*$, then $\beta\alpha$ denotes the concatenation of $\beta$ and $\alpha$. 

Given $F\subseteq \alf^*$, we define the \emph{subshift} (or \emph{shift space}) $\osf_F\subseteq \alf^\N$ as the set of all sequences $x\in \alf^\N$ such that no finite word from $x$ belongs to $F$. In this paper, we drop the subscript $F$ and write $\osf$ in place of $\osf_F$, as we do not use $F$ explicitly. 

A subset $\osf \subseteq \alf^\N$ is \emph{(shift) invariant} for $\sigma$ if $\sigma (\osf)\subseteq \osf$. For an invariant subset $\osf \subseteq \alf^\N$, we define $\CL_n(\osf)$ as the set of all words of length $n$ that appear in some sequence of $\osf$. The \emph{language} of $\osf$ is set $$\lang:=\bigcup_{n=0}^\infty\CL_n(\osf).$$ If $\osf=\osf_F$ for a set $F$ of forbidden words, then $\osf$ is shift invariant. Given a subshift $\osf$ over an alphabet $\alf$ and $\alpha,\beta\in \lang$, we define \[C(\alpha,\beta):=\{\beta x\in\osf:\alpha x\in\osf\}.\]

\begin{remark}\label{pausa}
    The definition of $C(\alpha,\beta)$ implies that if $\alpha=\alpha_1\ldots\alpha_n$, then
    \[C(\eword,\alpha_1)\cap C(\eword,\alpha_1\alpha_2)\cdots\cap C(\eword,\alpha_1\ldots\alpha_n)=C(\eword,\alpha)\]
    and
    \[C(\alpha_n,\eword)\cap C(\alpha_{n-1}\alpha_n,\eword)\cap\cdots\cap C(\alpha_1\ldots\alpha_n,\eword)=C(\alpha,\eword).\]
\end{remark}

\begin{definition}\label{diachuvoso}
For a subshift $\osf$, we let $\TCB$ be the Boolean algebra of subsets of $\osf$ generated by all $C(\alpha,\beta)$ for $\alpha,\beta\in\lang$. Its Stone dual is denoted by $\HTCB$. For each $A\in \TCB$, we let $O_A:=\{\mathcal{F}\in\HTCB:A\in\mathcal{F}\}$. Then the family $\{O_A\}_{A\in\TCB}$ is a basis of compact open-sets for the topology on $\HTCB$.
\end{definition}

\begin{remark}\label{U_elements}
Notice that each element of $\TCB$ is a finite union of elements of the form \[C(\alpha_1,\beta_1)\cap\ldots\cap C(\alpha_n,\beta_n)  \cap C(\mu_1,\nu_1)^c\cap \ldots \cap C(\mu_m,\nu_m)^c.\]
\end{remark}

\begin{definition}\label{gelado}\cite[Definition 3.2]{BCGWSubshift}
Let $\osf$ be a subshift. We define the \emph{unital subshift algebra} $\ualgshift$ as the universal unital $R$-algebra  with generators $\{p_A: A\in\TCB\}$ and $\{s_a,s_a^*: a\in\alf\}$, subject to the relations:
\begin{enumerate}[(i)]
    \item $p_{\osf}=1$, $p_{A\cap B}=p_Ap_B$, $p_{A\cup B}=p_A+p_B-p_{A\cap B}$ and $p_{\emptyset}=0$, for every $A,B\in\TCB$;
    \item $s_as_a^*s_a=s_a$ and $s_a^*s_as_a^*=s_a^*$ for all $a\in\alf$;
    \item $s_{\beta}s^*_{\alpha}s_{\alpha}s^*_{\beta}=p_{C(\alpha,\beta)}$ for all $\alpha,\beta\in\lang$, where $s_{\eword}:=1$ and, for $\alpha=\alpha_1\ldots\alpha_n\in\lang$, $s_\alpha:=s_{\alpha_1}\cdots s_{\alpha_n}$ and $s_\alpha^*:=s_{\alpha_n}^*\cdots s_{\alpha_1}^*$.
\end{enumerate}
\end{definition}

\begin{proposition}\label{lemma.algebra.unital}\cite[Proposition 3.6]{BCGWSubshift}
Let $\osf$ be a subshift and $\ualgshift$ it associated unital subshift algebra. Then,
\begin{enumerate}[(i)]
    \item $s_a^*s_b= \delta_{a,b} p_{F_a}$, for all $a,b\in \alf$;
    \item $s_\alpha^*s_\alpha$ and $s_\beta^*s_\beta$ commute for all $\alpha,\beta\in\lang$;
    \item $s_\alpha^*s_\alpha$ and $s_\beta s_\beta^*$ commute for all $\alpha,\beta\in\lang$;
    \item $s_\alpha s_\beta=0$ for all $\alpha,\beta\in\lang$ such that $\alpha\beta\notin\lang$;
    \item $\ualgshift$ is generated as an $R$-algebra by the set $\{s_a, s_a^*: a\in\alf\}\cup\{1\}$.
\end{enumerate}
\end{proposition}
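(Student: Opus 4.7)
My plan is to prove parts (ii) and (iii) of the lemma first, since they are immediate; then part (i); then an auxiliary identity needed for (iv); then (iv); and finally (v). The key observation is that Definition~\ref{gelado}(iii) applied with $\beta=\eword$ (resp.\ $\alpha=\eword$) yields
\[
s_\alpha^*s_\alpha = p_{C(\alpha,\eword)} \qquad\text{and}\qquad s_\beta s_\beta^* = p_{C(\eword,\beta)}
\]
for every $\alpha,\beta\in\lang$. Both sides of the identities asserted in (ii) and (iii) therefore lie in the image of the Boolean-algebra homomorphism $A\mapsto p_A$ coming from Definition~\ref{gelado}(i), whose image is commutative; so (ii) and (iii) follow at once.

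For (i), the case $a=b$ is the identity $s_a^*s_a = p_{C(a,\eword)}=p_{F_a}$ already noted. For $a\neq b$, the sets $C(\eword,a)$ and $C(\eword,b)$ are disjoint (different first letters), so $p_{C(\eword,a)}p_{C(\eword,b)} = p_\emptyset = 0$, i.e., $s_as_a^*s_bs_b^* = 0$. Multiplying on the left by $s_a^*$ and on the right by $s_b$ and invoking $s_a^*s_as_a^* = s_a^*$ and $s_bs_b^*s_b = s_b$ from Definition~\ref{gelado}(ii), the left-hand side collapses to $s_a^*s_b$, which is therefore zero.

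The heart of the argument is (iv). First I would establish the auxiliary identity $s_\alpha s_\alpha^*s_\alpha = s_\alpha$ for every $\alpha\in\lang$, by induction on $|\alpha|$. The base case is Definition~\ref{gelado}(ii); for $\alpha = a\alpha'$, the factors $s_a^*s_a$ and $s_{\alpha'}s_{\alpha'}^*$ commute by parts (ii) and (iii) already proved, so
\[
s_\alpha s_\alpha^*s_\alpha = s_a(s_{\alpha'}s_{\alpha'}^*)(s_a^*s_a)s_{\alpha'} = (s_as_a^*s_a)(s_{\alpha'}s_{\alpha'}^*s_{\alpha'}) = s_a s_{\alpha'} = s_\alpha,
\]
using Definition~\ref{gelado}(ii) and the induction hypothesis. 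Now for (iv) itself, write $\beta = b_1\cdots b_m$ and let $k$ be the smallest index with $\alpha b_1\cdots b_k\notin\lang$; setting $\gamma = \alpha b_1\cdots b_{k-1}\in\lang$ and $b=b_k$, it suffices to show $s_\gamma s_b = 0$. The key set identity is
\[
C(\gamma,\eword)\cap C(\eword,b) = \{by'\in\osf : \gamma b y'\in\osf\} = \emptyset,
\]
which holds precisely because $\gamma b\notin\lang$. Translating through $A\mapsto p_A$ yields $s_\gamma^*s_\gamma s_b s_b^* = 0$; multiplying on the left by $s_\gamma$, on the right by $s_b$, and using the auxiliary identity together with $s_bs_b^*s_b = s_b$ collapses the left-hand side to $s_\gamma s_b$.

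Finally, (v) follows from Remark~\ref{U_elements}: each $A\in\TCB$ is a finite union of finite intersections of sets $C(\alpha,\beta)$ and their complements, and these Boolean operations translate via Definition~\ref{gelado}(i) to algebraic ones in $\ualgshift$ (products for intersections, $1-p$ for complements via $p_A+p_{A^c}=1$, and inclusion-exclusion for unions). Since $p_{C(\alpha,\beta)} = s_\beta s_\alpha^* s_\alpha s_\beta^*$ by Definition~\ref{gelado}(iii) is a word in the single-letter generators and the unit, every $p_A$, and hence all of $\ualgshift$, lies in the subalgebra generated by $\{s_a,s_a^*:a\in\alf\}\cup\{1\}$. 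The main subtlety of the plan is (iv): identifying the right reduction to a single-letter $\beta$ and pinning down the set identity $C(\gamma,\eword)\cap C(\eword,b)=\emptyset$; once those are in hand, only the partial-isometry relations from Definition~\ref{gelado}(ii) are needed to conclude.
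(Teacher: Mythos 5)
This statement is quoted from \cite[Proposition~3.6]{BCGWSubshift} and the present paper gives no proof of it, so there is nothing internal to compare against; judged on its own, your derivation from the defining relations of $\ualgshift$ is correct and is the standard one: specializing Definition~\ref{gelado}(iii) to $\beta=\eword$ and $\alpha=\eword$ to get $s_\alpha^*s_\alpha=p_{C(\alpha,\eword)}$ and $s_\beta s_\beta^*=p_{C(\eword,\beta)}$, using commutativity of the $p_A$'s for (ii)--(iii), the disjointness of the cylinders $C(\eword,a)$, $C(\eword,b)$ for (i), the induction giving $s_\alpha s_\alpha^*s_\alpha=s_\alpha$ plus the emptiness of $C(\gamma,\eword)\cap C(\eword,b)$ when $\gamma b\notin\lang$ for (iv), and Remark~\ref{U_elements} for (v). One small caveat: in part (i) with $a\neq b$ (and likewise in the base of your induction) you apply relation (iii) of Definition~\ref{gelado} to the single letters $a,b$, which as stated requires $a,b\in\CL_1(\osf)$; for a letter of $\alf$ not occurring in any element of $\osf$ the listed relations impose nothing beyond (ii) on $s_a$, so the identity $s_a^*s_b=\delta_{a,b}p_{F_a}$ is only available under the convention that every letter of $\alf$ lies in the language (the same convention implicitly needed for the statement itself, and the only case used elsewhere in the paper), so this is an imprecision inherited from the statement rather than a flaw in your argument.
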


Let $\osf$ be a subshift over an alphabet $\alf$. For $A\subseteq \osf$, we let $1_A$ denote the characteristic function on $A$, and we let $\udalgshift= \mathrm{span}_R\{1_{C(\alpha,\beta)}: \alpha,\beta \in\lang\}$. Let $\F$ be the free group generated by $\alf$ with the empty word $\eword$ as the identity. 
\begin{definition}
We say that an element $g$ of $\F$ is {\it simple with respect to $\osf$} if its reduced form is equal $\alpha \beta^{-1}$, with $\alpha,\beta \in \lang$.    
\end{definition}
 
In \cite[Section 5.1]{BCGWSubshift}, a partial action $\tau$ of $\F$ on $\udalgshift$ is contructed such that $\ualgshift$ is isomorphic to the partial skew group ring $\udalgshift\rtimes_{\tau}\F$. The partial action $\tau$ is constructed as follows. 

For $a\in\alf$, we define $\tauh_{a}:C(a,\eword)\to C(\eword,a)$ by $\tauh_{a}(x)=a x$, 
and $\tauh_{a^{-1}}:C(\eword,a)\to C(a,\eword)$ by  $\tauh_{a^{-1}}(ax)= x.$

The maps $\tauh_a$ and $\tauh_{a^{-1}}$ define a unique orthogonal and semi-saturated set-theoretic partial action
$\tauh=\left(\{W_g\}_{g\in\F},\{\tauh_g\}_{g\in\F}  \right)$ of $\F$ on $\osf$ such that if $g$ is simple with respect to $\osf$ and its reduced form is $g=\beta\alpha^{-1}$, then $W_{\beta\alpha^{-1}} = C(\alpha,\beta)$ and
\[\tauh_{\alpha\beta^{-1}}(\beta x)=\alpha x\]
for every $\beta x\in C(\alpha,\beta)$. If $g$ is not simple with respect to $\osf$, then $W_g=\emptyset$.

The partial action $\tauh$ induces a partial action $\tau$ of $\F$ on $\udalgshift$: for $\alpha,\beta\in \lang$ such that $\alpha\beta^{-1}$ is in reduced form, let $D_{\alpha\beta^{-1}}$ the ideal of $\udalgshift$ generated by $1_{C(\beta,\alpha)}$. Define $\tau_{\alpha\beta^{-1}}:D_{\beta\alpha^{-1}}\to D_{\alpha\beta^{-1}}$ by $\tau_{\alpha\beta^{-1}}(f)=f\circ\tauh_{\beta\alpha^{-1}}$, where $f\in D_{\beta\alpha^{-1}}$. Then $\tau_{\alpha\beta^{-1}}$ is an isomorphism that maps the ideal $D_{\beta\alpha^{-1}}$ onto the ideal $D_{\alpha\beta^{-1}}$. If $g$ is not simple with respect to $\osf$, we define $D_g=\{0\}$ and $\tau_g$ equals the zero function. Hence, we have an algebraic partial action $\tau=\left( \{D_g\}_{g\in \F}, \{\tau_g\}_{g\in\F} \right)$  of $\F$ on $\udalgshift$. 

\begin{remark}\label{pizza}
    Proposition 3.19 and Proposition 5.20 of \cite{BCGWSubshift} imply that $\udalgshift$ is isomorphic to the diagonal subalgebra of $\ualgshift$; that is, $\udalgshift \cong \mathrm{span}_R \{p_{C(\alpha, \beta)} : \alpha,\beta\in \lang\}$ via a map that sends $1_{C(\alpha,\beta)}$ to $p_{C(\alpha,\beta)}$. Hence, we identify and work with $\udalgshift$ as the diagonal subalgebra of $\ualgshift$. 
\end{remark}

In \cite[Section 5.2]{BCGWSubshift}, a topological partial action $\varphih=\left(\{V_g\}_{g\in\F},\{\varphih_g\}_{g\in\F}  \right)$ of $\F$ on $\HTCB$ is defined. We omit details of the definition here. For this paper, it suffices to know that  $V_g=\emptyset$ if $g$ is not simple with respect to $\osf$, and $V_{\alpha\beta^{-1}}=1_{O_{C(\beta,\alpha)}}$ for $\alpha,\beta\in\lang$ such that $\alpha\beta^{-1}$ is in reduced form. We also need the following proposition.

\begin{proposition}\cite[Proposition~5.17]{BCGWSubshift}\label{iotamap}
    The function $\iota:X\to \HTCB$ that maps $x$ to $\mathcal{F}_x:=\{A\in\TCB:x\in A\}$ is injective, has a dense image, and is equivariant with respect to the partial actions $\tauh$ and $\varphih$.
\end{proposition}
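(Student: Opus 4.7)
The plan is to verify four things: that each $\mathcal{F}_x$ really lies in $\HTCB$, that $\iota$ is injective, that $\iota(\osf)$ is dense, and that $\iota$ intertwines $\tauh$ and $\varphih$. First, for $x \in \osf$, the set $\mathcal{F}_x$ is closed under finite intersections and upward closed in $\TCB$, does not contain $\emptyset$, and is maximal because for each $A \in \TCB$ exactly one of $A$ and its complement $\osf\setminus A$ (also in $\TCB$ since $\osf$ is the top element) contains $x$; hence $\mathcal{F}_x \in \HTCB$.

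For injectivity, if $x \ne y$ in $\osf$ and $n$ is the first coordinate where they differ, then $\alpha := x_0 \ldots x_n \in \lang$ and $C(\eword, \alpha) \in \TCB$ separates them, giving $\mathcal{F}_x \ne \mathcal{F}_y$. For density, the sets $O_A$ with $A \in \TCB$ form a basis of $\HTCB$, and $O_A \ne \emptyset$ forces $A \ne \emptyset$ (any ultrafilter containing $A$ witnesses this), so choosing $x \in A$ yields $\iota(x) \in O_A$.

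The remaining task is equivariance: $\iota \circ \tauh_g = \varphih_g \circ \iota$ on $W_{g^{-1}}$ for every $g \in \F$. For $g$ not simple with respect to $\osf$, both $W_{g^{-1}}$ and $V_{g^{-1}}$ are empty and there is nothing to verify. For $g$ simple, writing $g = \alpha\beta^{-1}$ in reduced form, I would first observe that $\tauh_g$ induces, via preimage, a Boolean-algebra isomorphism between the principal ideals $\{A \in \TCB : A \subseteq W_{g^{-1}}\}$ and $\{A \in \TCB : A \subseteq W_g\}$, which follows by tracking the behaviour of $\tauh_g$ on the generators $C(\mu,\nu)$ of $\TCB$. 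By Stone duality this dualizes to the homeomorphism $\varphih_g$, and with respect to this description the computation
\[
A \in \varphih_g(\mathcal{F}_x) \iff \tauh_g^{-1}(A) \in \mathcal{F}_x \iff \tauh_g(x) \in A \iff A \in \mathcal{F}_{\tauh_g(x)}
\]
is immediate.

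The main obstacle is reconciling this abstract Stone-dual viewpoint with the explicit construction of $\varphih$ in \cite[Section~5.2]{BCGWSubshift}. The cleanest bypass is to show directly that the rule $\mathcal{F} \mapsto \{A \in \TCB : \tauh_g^{-1}(A \cap W_g) \in \mathcal{F}\}$ defines a semi-saturated orthogonal topological partial action on $\HTCB$ whose domains $V_g$ coincide with those of $\varphih$ and which agrees with $\varphih_a$, $\varphih_{a^{-1}}$ on generators $a \in \alf$; uniqueness of the semi-saturated orthogonal extension of these generator maps then forces it to equal $\varphih$, after which the displayed identity above closes the argument.
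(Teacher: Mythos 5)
This proposition is not proved in the paper at all: it is imported verbatim from \cite[Proposition~5.17]{BCGWSubshift}, so there is no internal proof to compare against, and your attempt must stand on its own. The injectivity and density parts of your argument are correct and are the standard ones: $\mathcal{F}_x$ is an ultrafilter because $\TCB$ is a Boolean algebra of subsets of $\osf$ with top element $C(\eword,\eword)=\osf$; the cylinder $C(\eword,x_0\cdots x_n)$ separates two sequences differing at coordinate $n$; and a nonempty basic set $O_A$ forces $A\neq\emptyset$, so any $x\in A$ gives $\iota(x)\in O_A$.

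The equivariance part, however, contains a genuine gap, which you yourself flag. Your displayed chain of equivalences proves equivariance of $\iota$ with respect to the partial action on $\HTCB$ obtained by Stone-dualizing $\tauh$, not with respect to the action $\varphih$ of \cite[Section~5.2]{BCGWSubshift}; identifying these two actions is exactly the nontrivial content, and it is left as a plan. Concretely, two steps are unexecuted. First, the claim that $\tauh_g$ carries elements of $\TCB$ contained in $W_{g^{-1}}$ to elements of $\TCB$ contained in $W_g$ is only asserted by ``tracking the generators''; this is a real case analysis (it is essentially \cite[Lemma~5.19]{BCGWSubshift}, the very computation the present paper quotes in the proof of Theorem~\ref{clouds}), and without it the dual map $\mathcal{F}\mapsto\{A\in\TCB:\tauh_g^{-1}(A\cap W_g)\in\mathcal{F}\}$ is not even well defined as a map into $\HTCB$. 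Second, the proposed bypass via uniqueness of semi-saturated orthogonal extensions requires verifying that this dual rule is a topological partial action of $\F$ which is semi-saturated and orthogonal, that its domains are the $V_g$, and that it agrees with $\varphih_a$, $\varphih_{a^{-1}}$ on the generators $a\in\alf$ --- but the explicit formulas for $\varphih_a$ are precisely the data you do not have (the paper deliberately omits the construction of $\varphih$), so the agreement on generators cannot be checked from the information at hand. The strategy is reasonable and would very likely close with access to the construction in \cite{BCGWSubshift}, but as written the equivariance claim is an outline rather than a proof.
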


\begin{proposition}\label{partial.rep}
    The map $\pi:\F\to\ualgshift$ given by
    $$\pi(g) = \left\{\begin{array}{ll} s_\alpha s_\beta^*, & \mbox{if } g \mbox{ is simple with respect to } \osf \mbox{ and its reduced form is } \alpha\beta^{-1}, \\ 0, & {otherwise} \end{array}\right.$$
    is a partial representation.
\end{proposition}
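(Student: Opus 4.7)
My plan is to verify the three partial representation axioms from Definition~\ref{partialrep}. Axiom (i) is immediate: $\pi(e) = s_\eword s_\eword^* = 1$ by the convention $s_\eword = 1$ in Definition~\ref{gelado}.

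For axioms (ii) and (iii), I would first observe that $\ualgshift$ admits a unique $R$-linear anti-multiplicative involution $*$ that fixes each $p_A$ and exchanges $s_a \leftrightarrow s_a^*$ for every $a \in \alf$. To see this is well defined, one checks that each defining relation of Definition~\ref{gelado} is stable under $*$: the relations in (i) fix the $p_A$; the two identities in (ii) swap into each other; and the identity in (iii) is palindromic under $*$. Since $(s_\alpha s_\beta^*)^* = s_\beta s_\alpha^*$, we have $\pi(g)^* = \pi(g^{-1})$ for every $g \in \F$. Applying $*$ to (ii) and renaming variables turns it into (iii), so it suffices to verify (ii).

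To prove (ii) I would split into cases on $h$. If $h$ is not simple with respect to $\osf$, then neither is $h^{-1}$, so $\pi(h) = \pi(h^{-1}) = 0$ and both sides of (ii) vanish. Otherwise, $h = \alpha_2\beta_2^{-1}$ is simple in reduced form, and Definition~\ref{gelado}(iii) gives $\pi(h)\pi(h^{-1}) = s_{\alpha_2} s_{\beta_2}^* s_{\beta_2} s_{\alpha_2}^* = p_{C(\beta_2,\alpha_2)}$. The axiom thus reduces to
\[
\pi(g)\, p_{C(\beta_2,\alpha_2)} = \pi(gh)\, s_{\beta_2} s_{\alpha_2}^*,
\]
which I would establish by a case analysis on the reduced form of $g$ and the cancellations occurring in $gh = g\alpha_2\beta_2^{-1}$, using Lemma~\ref{lemma.algebra.unital}(i) ($s_a^*s_b = \delta_{a,b}p_{F_a}$, which forces $\pi(g) p_{C(\beta_2,\alpha_2)}$ to vanish whenever there is a mismatched interface between the tail of $g$ and the head of $\alpha_2$) and Lemma~\ref{lemma.algebra.unital}(iv) ($s_\alpha s_\beta = 0$ if $\alpha\beta \notin \lang$).

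The main obstacle is the case analysis in the final step, especially the subcases where $g$ is not simple but $gh$ is simple (or vice versa): here one must show that right multiplication by the idempotent $p_{C(\beta_2,\alpha_2)}$ forces the apparently different expressions on the two sides to collapse to a common element, which requires carefully tracking the cancellations between the tail of $g$ and the head of $\alpha_2$ inside the relations of $\ualgshift$. An alternative route, sidestepping this bookkeeping, is to transport the canonical partial representation $g \mapsto 1_{W_g}\delta_g$ from the crossed product $\udalgshift\rtimes_\tau\F$ through the isomorphism $\udalgshift\rtimes_\tau\F \cong \ualgshift$ recalled from \cite[Section~5.1]{BCGWSubshift}, under which $1_{C(\beta,\alpha)}\delta_{\alpha\beta^{-1}}$ corresponds to $s_\alpha s_\beta^*$.
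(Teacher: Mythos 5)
Your reduction of axiom (iii) to axiom (ii) via the involution is correct, and it is a nice improvement on the paper, which merely declares (iii) ``analogous'': the check that the defining relations of Definition~\ref{gelado} are stable under the map fixing the $p_A$ and exchanging $s_a\leftrightarrow s_a^*$ is exactly right, and it does give $\pi(g)^*=\pi(g^{-1})$. For axiom (ii), however, your plan is essentially the paper's (a case analysis over which of $g$, $h$, $gh$ are simple with respect to $\osf$, settled with Proposition~\ref{lemma.algebra.unital}), and the part you leave unexecuted is precisely where the content of the proposition lies. Moreover, your sketch of that part is too optimistic: in the case where $h$ and $gh$ are simple but $g$ is not, one must show $\pi(gh)\,\pi(h^{-1})=0$, and writing $gh=\alpha_1\beta_1^{-1}$, $h=\alpha_2\beta_2^{-1}$ in reduced form, non-simplicity of $g$ forces $\beta_1=\mu\beta_1'$ and $\beta_2=\mu\beta_2'$ with $\beta_1',\beta_2'$ nonempty and beginning with different letters; then $\pi(gh)\pi(h^{-1})=s_{\alpha_1}s_{\beta_1'}^*s_\mu^*s_\mu s_{\beta_2'}s_{\alpha_2}^*$, and the mismatched letters are \emph{not} adjacent, so Proposition~\ref{lemma.algebra.unital}(i) does not apply directly. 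One has to insert a range projection $s_{\beta_2'}s_{\beta_2'}^*$ and commute it past the source projection $s_\mu^*s_\mu$ using Proposition~\ref{lemma.algebra.unital}(iii) before the factor $s_{\beta_1'}^*s_{\beta_2'}=0$ appears; the all-simple case needs a similar insert-and-commute manipulation rather than only items (i) and (iv). So, as written, the core verification is a promissory note with the wrong toolkit singled out.

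Your alternative route — transporting the canonical partial representation $g\mapsto 1_{W_g}\delta_g$ through the isomorphism $\ualgshift\cong\udalgshift\rtimes_\tau\F$ of \cite[Section~5.1]{BCGWSubshift} — would indeed give a complete proof genuinely different from the paper's hands-on computation, and it is attractive because the fact that $g\mapsto 1_g\delta_g$ is a partial representation for a partial action whose domains are generated by idempotents is standard. But to make it a proof you must quote or verify the precise form of that isomorphism, namely that it sends $s_\alpha s_\beta^*$ to $1_{C(\beta,\alpha)}\delta_{\alpha\beta^{-1}}$ for every $g=\alpha\beta^{-1}$ simple in reduced form and sends nothing to $\delta_g$ for non-simple $g$ (this uses semi-saturation and orthogonality of $\tau$); as stated it is a pointer rather than an argument. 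Either finish the case analysis (including the commutation trick above) or make the crossed-product identification explicit; at present the proposal has a genuine gap at exactly the step the proposition is about.
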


\begin{proof}
We will prove that Conditions~(i) to (iii) of Definition~\ref{partialrep} are satisfied. 
Since $\omega = \omega\omega^{-1}$ and $s_\omega=s_\omega^*=1$, we have that Condition~(i) is satisfied. 

To prove the next two items, observe first that $g\in \F$ is simple with respect to $\osf$ if, and only if, $g^{-1}$ is simple with respect to $\osf$.

To check Condition~(ii), that is,  that $\pi(g)\pi(h)\pi(h^{-1})=\pi(gh)\pi(h^{-1})$, we need to consider several cases. \\
{\it Case 1:} $h$ is not simple with respect to $\osf$. In this case, $\pi(h^{-1})=0$ and Condition~(ii) is satisfied. \\
{\it Case 2:} $h$ is simple with respect to $\osf$ with reduced form $\alpha\beta^{-1}$, but $g$ and $gh$ are not simple with respect to $\osf$. In this case, $\pi(g)$ and $\pi(gh)$ are equal $0$ and Condition~(ii) is satisfied. \\
{\it Case 3:} $h$ and $gh$ are simple with respect to $\osf$, but $g$ is not simple with respect to $\osf$. In this case, since $\pi(g)=0$, we have to show that $\pi(gh)\pi(h^{-1})$ equals $0$. 
Suppose that $\alpha_1\beta_1^{-1}$ is the reduced form of $gh$ and $\alpha_3\beta_3^{-1}$ is the reduced form of $h$. 
Since $g=ghh^{-1}$ and $g$ is not simple with respect to $\osf$, the reduced form of $g$ is $\alpha_1 \beta_2^{-1} \beta_4 \alpha_3^{-1}$ for some nonempty $\beta_2$ and $\beta_4$ such that the first letters of $\beta_2$ and $\beta_4$ are different. Moreover, we have that $\beta_1=\alpha \beta_2$ and $\beta_3=\alpha\beta_4$, for some $\alpha$. Using Proposition~\ref{lemma.algebra.unital}, we obtain
$$\pi(gh)\pi(h^{-1}) = s_{\alpha_1} s_{\beta_1}^* s_{\beta_3}s_{\alpha_3}^* = s_{\alpha_1} s_{\beta_2}^*s_{\alpha}^*s_\alpha s_{\beta_4}s_{\alpha_3}^* = $$ $$s_{\alpha_1} s_{\beta_2}^*s_{\alpha}^*s_\alpha s_{\beta_4}s_{\beta_4}^*s_{\beta_4}s_{\alpha_3}^* = s_{\alpha_1} s_{\beta_2}^*s_{\beta_4}s_{\beta_4}^*s_{\alpha}^*s_\alpha s_{\beta_4}s_{\alpha_3}^* = 0,$$
finishing this case. \\
{\it Case 4:} $h$ and $g$ are simple with respect to $\osf$, but $gh$ are not. In this case, writing $g=\alpha_1\beta_1^{-1}$ and $h=\alpha_2\beta_2^{-1}$ in reduced form, neither $\beta_1$ nor $\alpha_2$ are the beginning of each other. It follows from Proposition~\ref{lemma.algebra.unital}(i) that $\pi(g)\pi(h) = s_{\alpha_1}s_{\beta_1}^*s_{\alpha_2}s_{\beta_2}^* =0$. Since $gh$ cannot be written as $\alpha\beta^{-1}$, we obtain that $\pi(gh)=0$, completing the proof in this case.
{\it Case 5:} $h$, $g$ and $gh$ are simple with respect to $\osf$. In this case, writing $g=\alpha_1\beta_1^{-1}$ and $h=\alpha_2\beta_2^{-1}$ in reduced form, we must have $\alpha_2=\beta_1\gamma$ or $\beta_1=\alpha_2\gamma$. If $\alpha_2=\beta_1\gamma$, then $gh = (\alpha_1\gamma)\beta_2^{-1}$ is the reduced form of $gh$ and, since 
$gh$ is simple with respect to $\osf$, then $\alpha_1\gamma\in \lang$. By Proposition~~\ref{lemma.algebra.unital},
$$\pi(gh)\pi(h^{-1}) = s_{\alpha_1\gamma}s_{\beta_2}^*s_{\beta_2}s_{\beta_1\gamma}^* = s_{\alpha_1}s_{\gamma}s_{\beta_2}^*s_{\beta_2}s_{\gamma}^*s_{\beta_1}^* = 
s_{\alpha_1}s_{\gamma}s_{\beta_2}^*s_{\beta_2}s_{\gamma}^*s_{\beta_1}^* s_{\beta_1} s_{\beta_1}^* $$ $$ =
s_{\alpha_1}s_{\beta_1}^* s_{\beta_1}s_{\gamma}s_{\beta_2}^*s_{\beta_2}s_{\gamma}^* s_{\beta_1}^* = 
s_{\alpha_1}s_{\beta_1}^* s_{\alpha_2}s_{\beta_2}^*s_{\beta_2}s_{\alpha_2}^* = \pi(g)\pi(h)\pi(h^{-1}),$$
as desired. The case $\beta_1=\alpha_2\gamma$ is analogous.

We have exhausted all possible cases and so completed the proof of Condition~(ii) in Definition~\ref{partialrep}. The proof of Condition~(iii) is analogous to the above and we leave it to the reader. 
\end{proof}

\begin{remark}\label{firework}
    Notice that $\pi(\alpha \beta^{-1})\neq 0$ if and only if $C(\alpha,\beta)\neq \emptyset$. Moreover, in this case, $\pi(\beta\alpha^{-1})\pi(\alpha\beta^{-1})=s_{\beta}s^*_{\alpha}s_{\alpha}s^*_{\beta}=p_{C(\alpha,\beta)}$.
\end{remark}

\begin{theorem}\label{clouds}
    Let $\osf$ be a subshift over an alphabet $\alf$, let $\F$ be the free group generated by $\alf$, let be $R$ a commutative unital ring, and let $\pi$ be the partial representation of Proposition~\ref{partial.rep}. Consider the set of relations
    \[\mathcal{R}=\left\{\sum_{i}\lambda_i\prod_j\varepsilon_{\beta_{ij}\gamma_{ij}^{-1}}:\sum_{i}\lambda_i\prod_{j}p_{C(\gamma_{ij},\beta_{ij})}=0\text{ in }\ualgshift\right\}\cup\left\{\varepsilon_g:\pi(g)=0\right\}.
    \]
    Then, there exists an isomorphism $\Xi:R_{\operatorname{par}}(\F,\mathcal{R})\to\ualgshift$ such that $\Xi([g])=\pi(g)$ for all $g\in\F$. Moreover, $\Xi$ restricts to an isomorphism between $A_{\operatorname{par}}(\F,\mathcal{R})$ and $\udalgshift$, which is equivariant with respect to the partial actions $\theta$ and $\tau$.
\end{theorem}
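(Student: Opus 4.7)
The plan is to build $\Xi$ from the universal property of $R_{\operatorname{par}}(\F,\mathcal{R})$, show that it restricts to an equivariant isomorphism $A_{\operatorname{par}}(\F,\mathcal{R})\to\udalgshift$, and then promote that restriction to the global isomorphism via Proposition~\ref{quefome} together with the analogous decomposition $\ualgshift\cong\udalgshift\rtimes_\tau\F$. By Proposition~\ref{partial.rep} the map $\pi$ is a partial representation, so what is left for the construction of $\Xi$ is to verify that $\pi$ satisfies the relations in $\mathcal{R}$. The second family $\varepsilon_g=0$ is immediate since $\pi(g)=0$ gives $\pi(g)\pi(g^{-1})=0$. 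For the first family, Remark~\ref{firework} identifies $\pi(\beta\gamma^{-1})\pi(\gamma\beta^{-1})$ with $p_{C(\gamma,\beta)}$ whenever $\beta\gamma^{-1}$ is simple, so any polynomial identity $\sum_i\lambda_i\prod_j p_{C(\gamma_{ij},\beta_{ij})}=0$ in $\ualgshift$ translates into the corresponding vanishing of $\sum_i\lambda_i\prod_j\varepsilon_{\beta_{ij}\gamma_{ij}^{-1}}$; universality then supplies $\Xi$ with $\Xi([g])=\pi(g)$.

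Next I would analyze the restriction of $\Xi$ to $A_{\operatorname{par}}(\F,\mathcal{R})$, which lands in $\udalgshift$ (identified with the diagonal subalgebra of $\ualgshift$ via Remark~\ref{pizza}) because $\Xi(\varepsilon_g)=\pi(g)\pi(g^{-1})$ is either $0$ or a $p_{C(\alpha,\beta)}$. Surjectivity follows from Remark~\ref{U_elements}: each $p_A$ with $A\in\TCB$ is a Boolean combination of $p_{C(\alpha,\beta)}$'s. For injectivity, given $a\in A_{\operatorname{par}}(\F,\mathcal{R})$ with $\Xi(a)=0$, I would write $a=\sum_i\lambda_i\prod_j\varepsilon_{g_{ij}}$ with each $g_{ij}=\beta_{ij}\gamma_{ij}^{-1}$ simple (terms with non-simple $g_{ij}$ vanish by the second family of $\mathcal{R}$). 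The hypothesis $\Xi(a)=0$ then reads $\sum_i\lambda_i\prod_j p_{C(\gamma_{ij},\beta_{ij})}=0$ in $\ualgshift$, placing $\sum_i\lambda_i\prod_j\varepsilon_{\beta_{ij}\gamma_{ij}^{-1}}$ directly into the first family of $\mathcal{R}$ and forcing $a=0$.

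For equivariance of the restriction and the final step, I would observe that both partial actions are implemented by conjugation in their ambient algebras: the partial skew decomposition of Proposition~\ref{quefome} yields $[g]\,b\,[g^{-1}]=\theta_g(b\varepsilon_{g^{-1}})$ inside $R_{\operatorname{par}}(\F,\mathcal{R})$ for $b\in A_{\operatorname{par}}(\F,\mathcal{R})$, and the analogous identity $\pi(g)\,\tilde{b}\,\pi(g^{-1})=\tau_g(\tilde{b}\,p_{C(\alpha_g,\beta_g)})$ holds inside $\ualgshift$ for $\tilde{b}\in\udalgshift$ and simple $g=\alpha_g\beta_g^{-1}$. Applying the ring homomorphism $\Xi$ to the first identity and matching with the second produces $\Xi\circ\theta_g=\tau_g\circ\Xi$ on $D_{g^{-1}}$. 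This equivariant isomorphism of diagonals, combined with Proposition~\ref{quefome} and the decomposition $\ualgshift\cong\udalgshift\rtimes_\tau\F$ from \cite[Section~5.1]{BCGWSubshift}, extends canonically to an isomorphism $R_{\operatorname{par}}(\F,\mathcal{R})\to\ualgshift$; comparing images of the generators $[g]$ shows that this extension must be $\Xi$, which is therefore the desired global isomorphism.

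The main obstacle I expect is the injectivity step on $A_{\operatorname{par}}(\F,\mathcal{R})$, which relies on the precise way $\mathcal{R}$ is engineered to capture exactly the polynomial identities among the $p_{C(\gamma,\beta)}$'s; the reduction to simple group elements, together with the match of the resulting identity with a member of the first family of $\mathcal{R}$, must be carried out carefully to ensure no spurious identities are lost. A secondary technical point is aligning the conventions of $\tauh$, $\tau$, and the partial skew group ring decomposition of $\ualgshift$ so that the conjugation identity used for equivariance is correctly formulated.
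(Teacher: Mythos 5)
Your proposal is essentially correct, but after the shared first step (obtaining $\Xi$ from the universal property via Proposition~\ref{partial.rep} and Remark~\ref{firework}) it takes a genuinely different route from the paper. The paper proves bijectivity by constructing an explicit inverse $\Lambda:\ualgshift\to R_{\operatorname{par}}(\F,\mathcal{R})$ from the universal property of $\ualgshift$ (sending $s_a\mapsto[a]$, $s_a^*\mapsto[a^{-1}]$ and $p_A\mapsto q_A$ via Remark~\ref{U_elements}), which costs a verification of the relations of Definition~\ref{gelado} inside $R_{\operatorname{par}}(\F,\mathcal{R})$ together with identities such as $[\alpha_1]\cdots[\alpha_n]=[\alpha]$, and it checks equivariance by a direct computation on the generators $\varepsilon_{g^{-1}}\varepsilon_h$ using \cite[Lemma~5.19(i)]{BCGWSubshift}. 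You instead prove injectivity of $\Xi$ on the diagonal directly from the way $\mathcal{R}$ is engineered (a clean argument the paper does not use, and valid provided each $g_{ij}$ is first put in reduced form $\beta_{ij}\gamma_{ij}^{-1}$, which is exactly when $\Xi(\varepsilon_{g_{ij}})=p_{C(\gamma_{ij},\beta_{ij})}$ and the match with the first family of $\mathcal{R}$ is legitimate), get equivariance from conjugation identities, and then promote the diagonal isomorphism to the global one through Proposition~\ref{quefome} and the decomposition $\ualgshift\cong\udalgshift\rtimes_\tau\F$ of \cite{BCGWSubshift}. The paper's route buys self-containedness: it never needs the explicit form of that decomposition, only Remark~\ref{pizza} and the formula for $\tau$. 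Your route avoids all the relation-checking for $\Lambda$, but both your identity $\pi(g)\,\tilde b\,\pi(g^{-1})=\tau_g\bigl(\tilde b\,p_{C(\alpha_g,\beta_g)}\bigr)$ and the identification of the induced skew-group-ring isomorphism with $\Xi$ on the generators $[g]$ require knowing that the isomorphism of \cite[Section~5.1]{BCGWSubshift} sends $s_\alpha s_\beta^*$ to $1_{C(\beta,\alpha)}\delta_{\alpha\beta^{-1}}$ and restricts on the diagonal to the identification of Remark~\ref{pizza}; so the convention alignment you flag is a genuine (if routine) proof obligation rather than a formality. Two smaller points to attend to: for surjectivity onto $\udalgshift$ you also need $p_{C(\alpha,\beta)}$ in the image when $\beta\alpha^{-1}$ is not reduced, which follows from $C(\alpha'\mu,\beta'\mu)=C(\alpha',\beta')\cap C(\eword,\beta'\mu)$, giving $p_{C(\alpha,\beta)}=\Xi\bigl(\varepsilon_{\beta'(\alpha')^{-1}}\varepsilon_{\beta}\bigr)$; and when inducing the isomorphism of partial skew group rings you should record that $\Xi\bigl(A_{\operatorname{par}}(\F,\mathcal{R})\varepsilon_g\bigr)=D_g$ for every $g$ (both sides being zero when $\pi(g)=0$, by the second family of relations), which is what makes $a\delta_g\mapsto\Xi(a)\delta_g$ well defined.
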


\begin{proof}
By Proposition~\ref{partial.rep}, Remark~\ref{firework}, and the definition of $R_{\operatorname{par}}(\F,\mathcal{R})$, there exists an $R$-algebra homomorphism $\Xi:R_{\operatorname{par}}(\F,\mathcal{R})\to\ualgshift$ such that $\Xi([g])=\pi(g)$ for all $g\in\F$, where $\pi$ is given in Proposition~\ref{partial.rep}. In particular, $\Xi(\varepsilon_{\beta\alpha^{-1}})=p_{C(\alpha,\beta)}$ for all $\alpha,\beta\in\lang$.

To define an inverse, for each $a\in\alf$ we map $s_a$ to $t_a:=[a]$ and $s_a^*$ to $t_a^*:=[a^{-1}]$. As for the elements $p_A$, where $A\in\TCB$, we use Remark~\ref{U_elements} and relation (i) in Definition~\ref{gelado} to write $p_A$ as
\[p_A=\sum_{i}\lambda_i\prod_{j}p_{C(\gamma_{ij},\beta_{ij})},\]
and map $p_A$ to
\[q_A:=\sum_{i}\lambda_i\prod_j\varepsilon_{\beta_{ij}\gamma_{ij}^{-1}}.\]
The relations in $\mathcal{R}$ ensure that two different ways of writing $p_A$ in the form above will yield the same element $q_A$ in $R_{\operatorname{par}}(\F,\mathcal{R})$. Moreover, using $\mathcal{R}$, we see that the family $\{q_A\}_{A\in\TCB}$ satisfies relation (i) of Definition~\ref{gelado}. That the family $\{t_a,t_a^*\}$ satisfies relation (ii) of Definition~\ref{gelado} follows from Lemma~\ref{semsentido}. For relation (iii) of Definition~\ref{gelado}, we first need a few preliminary computations. Let $\alpha=\alpha_1\ldots\alpha_n\in\lang$. By Remark~\ref{pausa},
\[p_{C(\eword,\alpha_1)}p_{C(\eword,\alpha_1\alpha_2)}\cdots p_{C(\eword,\alpha_1\ldots\alpha_n)}=p_{C(\eword,\alpha)},\]
so that
\[\varepsilon_{\alpha_1}\varepsilon_{\alpha_1\alpha_2}\cdots\varepsilon_{\alpha_1\ldots\alpha_{n-1}}\varepsilon_{\alpha}=\varepsilon_{\alpha}\]
in $R_{\operatorname{par}}(\F,\mathcal{R})$.

Thus, by Lemma~\ref{semsentido} and the relations in $\mathcal{R}$, we obtain
\begin{align*}
    [\alpha_1]\cdots[\alpha_n] &=\varepsilon_{\alpha_1}\varepsilon_{\alpha_1\alpha_2}\cdots\varepsilon_{\alpha_1\ldots\alpha_{n-1}}[\alpha_1\ldots\alpha_n]\\
    &=\varepsilon_{\alpha_1}\varepsilon_{\alpha_1\alpha_2}\cdots\varepsilon_{\alpha_1\ldots\alpha_{n-1}}\varepsilon_{\alpha}[\alpha] \\
    &=\varepsilon_{\alpha}[\alpha]\\
    &=[\alpha].
\end{align*}
Similarly, in $R_{\operatorname{par}}(\F,\mathcal{R})$, we have that $[\alpha_n^{-1}]\cdots[\alpha_1^{-1}]=[\alpha^{-1}]$. Let also $\beta=\beta_1\ldots\beta_m\in\lang$. Then
\begin{align*}
    t_{\beta_1}\cdots t_{\beta_m}t_{\alpha_n}^*\cdots t_{\alpha_1}^*t_{\alpha_1}&\cdots t_{\alpha_n}t_{\beta_m}^*\cdots t_{\beta_1}^* \\ 
    &=[\beta_1]\cdots[\beta_m][\alpha_n^{-1}]\cdots[\alpha_1^{-1}][\alpha_1]\cdots[\alpha_n][\beta_m^{-1}]\cdots[\beta_1^{-1}] \\
    &=[\beta][\alpha^{-1}][\alpha][\beta^{-1}] \\
    &=[\beta]\varepsilon_{\alpha^{-1}}[\beta^{-1}]\\
    &=\varepsilon_{\beta\alpha^{-1}}[\beta][\beta^{-1}]\\
    &=\varepsilon_{\beta\alpha^{-1}}\varepsilon_{\beta}\\
    &=q_{C(\alpha,\beta)}q_{C(\eword,\beta)}\\
    &=q_{C(\alpha,\beta)\cap C(\eword,\beta)}\\
    &=q_{C(\alpha,\beta)}.
\end{align*}
By the universal property of $\ualgshift$, we obtain an $R$-algebra homomorphism $\Lambda:\ualgshift\to R_{\operatorname{par}}(\F,\mathcal{R})$ such that $\Lambda(s_\alpha)=[\alpha]$, $\Lambda(s_\alpha^*)=[\alpha^{-1}]$, and $\Lambda(p_{C(\alpha,\beta)})=\varepsilon_{\beta\alpha^{-1}}$ for all $\alpha,\beta\in\lang$.

By the definitions of $\Lambda$ and $\Xi$, for every $a\in\alf$ and $A\in\TCB$, we have that $\Xi(\Lambda(s_a))=\Xi([a])=s_a$, $\Xi(\Lambda(s_a)^*)=\Xi([a^{-1}])=s_a^*$, and $\Xi(\Lambda(p_A))=\Xi(q_A)=p_A$.

Now, let $g\in\F$. If $g$ is simple with respect to $\osf$, say $g=\alpha\beta^{-1}$ in reduced form for some $\alpha,\beta\in\lang$, then
\begin{align*}
    \Lambda(\Xi([g]))&=\Lambda(s_\alpha s_\beta^*)\\
    &=[\alpha][\beta^{-1}]\\
    &=\varepsilon_{\alpha}[\alpha\beta^{-1}]\\
    &=\varepsilon_{\alpha}\varepsilon_{\alpha\beta^{-1}}[\alpha\beta^{-1}]\\
    &=\varepsilon_{\alpha\beta^{-1}}[\alpha\beta^{-1}]\\
    &=[\alpha\beta^{-1}]\\
    &=[g],
\end{align*}
where the fifth equality follows from the relations in $\mathcal{R}$, since $p_{C(\eword,\alpha)}p_{C(\beta,\alpha)}=p_{C(\beta,\alpha)}$. If $g$ is not simple with respect to $\osf$, then
$[g]=\varepsilon_g[g]=0$, and thus $\Lambda(\Xi([g]))=0=[g]$. This finishes the proof that $\Xi$ is an isomorphism. 

From the computations above, we see that the image of $A_{\operatorname{par}}(\F,\mathcal{R})$ under $\Xi$ is the subalgebra of $\ualgshift$ generated by projections of the form $p_{C(\alpha,\beta)}$. Due to relation (i) of Definition~\ref{gelado}, Remark~\ref{U_elements}, and Remark~\ref{pizza}, we have that the image of $A_{\operatorname{par}}(\F,\mathcal{R})$ is exactly $\udalgshift$.

To show that $\Xi$ is equivariant with respect to the partial actions $\theta$ and $\tau$, we must show that $\Xi\circ\theta_g(x) = \tau_g\circ\Xi(x)$, for every $g\in G$ and $x\in D_{g^{-1}}$. Since $D_{g^{-1}}$ is generated by $\{\varepsilon_{g^{-1}}\varepsilon_{h}\}_{h\in G}$, it is enough to verify the equality for $x = \varepsilon_{g^{-1}}\varepsilon_{h}$. We leave it to the reader to check that $$\Xi\circ\theta_g(\varepsilon_{g^{-1}}\varepsilon_{h}) = \tau_g\circ\Xi(\varepsilon_{g^{-1}}\varepsilon_{h})=0$$
if at least one of $g$, $h$ and $gh$ is not simple with respect to $\osf$. Now, consider the case where $g$, $h$, and $gh$ are simple with respect to $\osf$. As in Case 5 of the proof of Proposition~\ref{partial.rep}, if $g=\alpha_1\beta_1^{-1}$ and $h=\alpha_2\beta_2^{-1}$ are in reduced form, we must have $\alpha_2=\beta_1\gamma$ or $\beta_1=\alpha_2\gamma$. Assuming that $\alpha_2=\beta_1\gamma$ (the other case is analogous), then $gh = (\alpha_1\gamma)\beta_2^{-1}$ is the reduced form of $gh$. Therefore we have, on the one hand, that
$$\Xi\circ\theta_g(\varepsilon_{g^{-1}}\varepsilon_{h}) = \Xi(\varepsilon_{g}\varepsilon_{gh}) = p_{C(\beta_1,\alpha_1)}p_{C(\beta_2,\alpha_1\gamma)},$$
and, on the other hand, that
\begin{align*}
    \tau_g\circ\Xi(\varepsilon_{g^{-1}}\varepsilon_{h}) & = \tau_{\alpha_1\beta_1^{-1}}(p_{C(\alpha_1,\beta_1)}p_{C(\beta_2,\alpha_2)}) & \\
    & = \tau_{\alpha_1\beta_1^{-1}}(1_{C(\alpha_1,\beta_1)\cap C(\beta_2,\alpha_2)}) & \text{(see Remark~\ref{pizza})} \\
    & = 1_{C(\alpha_1,\beta_1)\cap C(\beta_2,\alpha_2)}\circ \widehat{\tau}_{\beta_1\alpha_1^{-1}} & \\
    & = 1_{\widehat{\tau}_{\alpha_1\beta_1^{-1}}(C(\alpha_1,\beta_1)\cap C(\beta_2,\alpha_2))} & \\
    & = 1_{C(\beta_1,\alpha_1)\cap C(\beta_2,\alpha_1\gamma)} & \text{(by \cite[Lemma 5.19(i)]{BCGWSubshift})} \\
    & = p_{C(\beta_1,\alpha_1)}p_{C(\beta_2,\alpha_1\gamma)}, &
\end{align*}
as desired.
\end{proof}

For each $x\in\osf$, let \[\xi_x=\{g\in \F:g=\beta\alpha^{-1}\text{ for some }\alpha,\beta\in\lang\text{ such that }x\in C(\alpha,\beta)\}.\]
In the next couple of lemmas, we compare the partial action $\widehat{\theta}$ of Theorem~\ref{grevedosalunos} with the partial action $\tauh$ described in this section.

\begin{lemma}\label{biscoito}
    Let $\alpha,\beta\in\lang$, $g\in\F$ and $x\in\osf$ be such that $\beta\alpha^{-1}$ is reduced in $\F$, $\beta\alpha^{-1}\in\xi_x$ and $\beta\alpha^{-1}g\in\xi_x$. Then $g$ is simple with respect to $\osf$.
\end{lemma}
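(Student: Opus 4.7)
The plan is to unpack the two hypotheses \(\beta\alpha^{-1}\in\xi_x\) and \(\beta\alpha^{-1}g\in\xi_x\) into concrete factorizations and then compute \(g\) directly in \(\F\). From \(\beta\alpha^{-1}\in\xi_x\) I extract \(y\in\alf^{\N}\) with \(x=\beta y\) and \(\alpha y\in\osf\). From \(\beta\alpha^{-1}g\in\xi_x\) I get \(\gamma,\delta\in\lang\) and \(z\in\alf^{\N}\) with \(\beta\alpha^{-1}g=\delta\gamma^{-1}\) in \(\F\), \(x=\delta z\), and \(\gamma z\in\osf\). Solving for \(g\) in the free group then gives \(g=\alpha\beta^{-1}\delta\gamma^{-1}\), and the job is to show that this word reduces to something of the form \(\mu\nu^{-1}\) with \(\mu,\nu\in\lang\).

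Since \(\beta\) and \(\delta\) are both prefixes of the infinite sequence \(x\), one must be a prefix of the other, and I split into two symmetric cases. If \(\delta=\beta\mu\) for some finite (possibly empty) word \(\mu\), then \(z=\mu y\) and the expression for \(g\) collapses to \(g=\alpha\mu\gamma^{-1}\); moreover \(\alpha\mu\) is a prefix of \(\alpha y\in\osf\), so \(\alpha\mu\in\lang\). Reducing \(\alpha\mu\gamma^{-1}\) in \(\F\) amounts to cancelling the longest common suffix of \(\alpha\mu\) and \(\gamma\), producing \(p\nu^{-1}\) where \(p\) is a prefix of \(\alpha\mu\) and \(\nu\) is a prefix of \(\gamma\); both lie in \(\lang\) since \(\lang\) is closed under taking subwords. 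The symmetric case \(\beta=\delta\mu\) is handled identically: \(y=\mu z\) yields \(g=\alpha(\gamma\mu)^{-1}\) with \(\gamma\mu\) a prefix of \(\gamma z\in\osf\), hence \(\gamma\mu\in\lang\), and the free-group reduction again delivers a word of the required form.

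There is no substantive obstacle in the argument; the whole proof is bookkeeping in \(\F\) together with one observation. The only delicate point is recognising that after cancellation in \(\alpha\mu\gamma^{-1}\) (respectively \(\alpha(\gamma\mu)^{-1}\)), the surviving positive and negative portions are still prefixes of words already known to belong to \(\lang\), so factoriality of \(\lang\) keeps them inside \(\lang\). Once this is noted, the conclusion that \(g\) is simple with respect to \(\osf\) is immediate.
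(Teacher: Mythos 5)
Your argument is correct and essentially identical to the paper's proof: unpack both memberships in $\xi_x$ into factorizations of $x$, compare the two prefixes $\beta$ and $\delta$ of $x$, split into the two containment cases, compute $g$ in $\F$, and finish using that $\lang$ is closed under taking prefixes (you just make the final free-group cancellation step more explicit than the paper does). Note only that in both cases you wrote the factorization backwards (it should be $y=\mu z$ when $\delta=\beta\mu$ and $z=\mu y$ when $\beta=\delta\mu$), but the claims you actually use afterwards ($\alpha\mu$ is a prefix of $\alpha y$, respectively $\gamma\mu$ is a prefix of $\gamma z$) are the ones that follow from the correct relations, so this is a harmless slip.
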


\begin{proof}
    Since $\beta\alpha^{-1}\in \xi_x$, there exists $y\in\osf$ such that $x=\beta y$ and $\alpha y\in\osf$. By hypothesis, $\beta\alpha^{-1}g=\mu\nu^{-1}$ for some $\mu,\nu\in\lang$ such that $x=\mu z$ for some $z\in\osf$ with $\nu z\in\osf$. Since $\beta$ and $\mu$ are initial segments of $x$, we have that $\mu=\beta\mu'$ for some $\mu'\in\lang$, or $\beta=\mu\beta'$ for some $\beta'\in\lang$.

    Suppose first that $\mu=\beta\mu'$. In this case, $x=\mu z=\beta\mu' z$  so that $\mu' z=y$. Also, $g=\alpha\mu'\nu^{-1}$. Note that $\alpha \mu' z=\alpha y\in \osf$. Hence, $\alpha\mu'\in\lang$.

    Now suppose that $\beta=\mu\beta'$. In this case, $x=\mu\beta' y$ so that $z=\beta' y$. Also, $g=\alpha(\nu\beta')^{-1}$. Note that $\nu \beta' y=\nu z$ and hence $\nu\beta'\in\lang$.

    Since initial segments of elements in $\lang$ are still in $\lang$, $g$ is simple with respect to $\osf$.
\end{proof}

\begin{lemma}\label{equivariance}
    Let $\alpha,\beta\in\lang$ and $x\in\osf$ be such that $\beta\alpha^{-1}$ is in reduced form in $\F$ and $\xi_x\in\Omega_{\beta\alpha^{-1}}$. Then, $\widehat{\theta}_{\alpha\beta^{-1}}(\xi_x)=\xi_{\tauh_{\alpha\beta^{-1}}(x)}$.
\end{lemma}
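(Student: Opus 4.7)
The plan is to first unpack the hypothesis. Since $\xi_x\in\Omega_{\beta\alpha^{-1}}$, we have $\beta\alpha^{-1}\in\xi_x$, which means $x\in C(\alpha,\beta)$. Hence we can write $x=\beta x'$ for some $x'\in\osf$ with $\alpha x'\in\osf$, and the point $y:=\tauh_{\alpha\beta^{-1}}(x)=\alpha x'$ is well defined. The lemma then reduces to showing that the two subsets $(\alpha\beta^{-1})\xi_x$ and $\xi_y$ of $\F$ coincide, where the action on the left is the natural action of $\F$ on its subsets coming from the definition of $\widehat{\theta}$.

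For the forward inclusion $(\alpha\beta^{-1})\xi_x\subseteq\xi_y$, I would take $g\in\xi_x$, written as $g=\mu\nu^{-1}$ with $\mu,\nu\in\lang$ and $x\in C(\nu,\mu)$, so $x=\mu z$ with $\nu z\in\osf$. The key observation is that both $\mu$ and $\beta$ are initial segments of $x$, so one is a prefix of the other. I would split into two cases: (a) $\mu=\beta\mu'$ with $\mu'\in\lang$, which forces $\mu' z=x'$; and (b) $\beta=\mu\beta'$ with $\beta'\in\lang$ (possibly empty), which forces $\beta' x'=z$. Carrying out the cancellation in $\F$ yields $(\alpha\beta^{-1})g=\alpha\mu'\nu^{-1}$ in case (a) and $(\alpha\beta^{-1})g=\alpha(\nu\beta')^{-1}$ in case (b). It then remains to verify that the factors appearing on the right lie in $\lang$ (they are prefixes of $y=\alpha\mu' z\in\osf$ and of $\nu z=\nu\beta' x'\in\osf$, respectively) and that $y$ lies in the corresponding cylinder $C(\nu,\alpha\mu')$ or $C(\nu\beta',\alpha)$; in both cases this places $(\alpha\beta^{-1})g$ in $\xi_y$ by the definition of $\xi_y$.

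The reverse inclusion $\xi_y\subseteq(\alpha\beta^{-1})\xi_x$ will follow by a symmetric argument. I would first observe that $\alpha\beta^{-1}\in\xi_y$: indeed, $y=\alpha x'$ and $\beta x'=x\in\osf$, so $y\in C(\beta,\alpha)$. Applying the forward inclusion just proved, but with the roles of $(\alpha,\beta,x)$ and $(\beta,\alpha,y)$ interchanged, gives $(\beta\alpha^{-1})\xi_y\subseteq\xi_{\tauh_{\beta\alpha^{-1}}(y)}=\xi_x$, which is equivalent to the desired inclusion $\xi_y\subseteq(\alpha\beta^{-1})\xi_x$.

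The main (minor) obstacle is bookkeeping: correctly carrying out the cancellation in the free group $\F$ and being careful with the boundary case $\beta'=\omega$ in case (b) of the forward inclusion. Lemma~\ref{biscoito} offers a useful consistency check that $(\alpha\beta^{-1})g$ is necessarily simple with respect to $\osf$ when $g\in\xi_x$, but the explicit two-case computation already exhibits the simple form directly, making a separate appeal to that lemma unnecessary.
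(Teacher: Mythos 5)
Your proof is correct, and it reaches the same elementary goal as the paper — the set identity $(\alpha\beta^{-1})\xi_x=\xi_{\alpha x'}$ — but by a somewhat leaner route. The paper fixes $g$ in its reduced form $\gamma\delta^{-1}$, invokes Lemma~\ref{biscoito} to know that any $g\in\widehat{\theta}_{\alpha\beta^{-1}}(\xi_x)$ is simple, reduces both memberships to the dichotomy ``$\alpha$ extends $\gamma$ or $\gamma$ extends $\alpha$'', and then runs a three-case chain of equivalences that settles both inclusions at once. You instead take an arbitrary witness $g=\mu\nu^{-1}$ of $g\in\xi_x$ (no reduction needed), split on whether $\mu$ extends $\beta$ or $\beta$ extends $\mu$ — both being prefixes of $x$ — and verify directly that $(\alpha\beta^{-1})g$ admits a witness placing it in $\xi_y$; the reverse inclusion then comes for free by exchanging $(\alpha,\beta,x)\leftrightarrow(\beta,\alpha,y)$, which is legitimate because your forward argument uses only $x=\beta x'$, $\alpha x'\in\osf$ and never the reducedness of $\beta\alpha^{-1}$, and left translation by $\alpha\beta^{-1}$ is a bijection on subsets of $\F$. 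This buys you two cases instead of three, no appeal to Lemma~\ref{biscoito}, and no bookkeeping with reduced forms of $g$; what the paper's version buys is an explicit description of which reduced group elements can occur on either side, which it reuses in spirit in later arguments. One small point you share with the paper and should state once: the step ``$\xi_x\in\Omega_{\beta\alpha^{-1}}$ hence $x\in C(\alpha,\beta)$'' is where the hypothesis that $\beta\alpha^{-1}$ is reduced is used, since a witness $\beta_1\alpha_1^{-1}=\beta\alpha^{-1}$ with $x\in C(\alpha_1,\beta_1)$ must have $\beta_1=\beta\mu$, $\alpha_1=\alpha\mu$, and then $x\in C(\alpha_1,\beta_1)\subseteq C(\alpha,\beta)$.
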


\begin{proof}
     Since $\xi_x\in \Omega_{\beta\alpha^{-1}}$, we have $x\in C(\alpha,\beta)$. Thus, there exists $y\in \osf$ such that $x=\beta y$ and $\alpha y\in \osf$. 

     Let $g\in\F$. First, suppose $g\in\widehat{\theta}_{\alpha\beta^{-1}}(\xi_x)$. Then $\beta\alpha^{-1}g\in \xi_x$. By Lemma~\ref{biscoito}, $g$ is simple with respect to $\osf$. Let $\gamma,\delta\in\lang$ be such that $g=\gamma\delta^{-1}$ is in reduced form. Since $\beta\alpha^{-1}g\in \xi_x$, we have that $\beta\alpha^{-1}g=\beta\alpha^{-1}\gamma\delta^{-1}$ is simple with respect to $\osf$. Hence, $\alpha=\gamma\alpha'$ or $\gamma=\alpha\gamma'$.

     On the other hand, suppose $g\in\xi_{\tauh_{\alpha\beta^{-1}}(x)}$. Since $\tauh_{\alpha\beta^{-1}}(x) = \alpha y$, by the definition of $\xi_{\alpha y}$, there exists $\gamma,\delta\in\lang$ such that $g=\gamma\delta^{-1}$ and $\alpha y\in C(\delta,\gamma)$. We may assume that $g=\gamma\delta^{-1}$ is in reduced form. Since $\alpha y\in C(\delta,\gamma)$, it follows that $\alpha=\gamma\alpha'$ or $\gamma=\alpha\gamma'$.

     Thus, to show that $\widehat{\theta}_{\alpha\beta^{-1}}(\xi_x)=\xi_{\tauh_{\alpha\beta^{-1}}(x)}$, we only need to consider $g\in\F$ such that $g=\gamma\delta^{-1}$, with $\gamma,\delta\in\lang$, $g$ in reduced form, and $\alpha=\gamma\alpha'$ or $\gamma=\alpha\gamma'$. We consider three cases.
     
    Case 1: $\alpha=\gamma\alpha'$ with $|\alpha'|>0$. In this case, the reduced form of $\beta\alpha^{-1}\gamma\delta^{-1}$ is $\beta(\delta\alpha')^{-1}$ and $\alpha y=\gamma\alpha' y$. It follows that
    \begin{align*}
        g\in\widehat{\theta}_{\alpha\beta^{-1}}(\xi_x) &\iff\beta(\delta\alpha')^{-1}\in\xi_x \iff x\in C(\delta\alpha',\beta) \iff \delta\alpha' y\in \osf \\
        &\iff \alpha y\in C(\delta,\gamma)\iff g\in\xi_{\alpha y}=\xi_{\tauh_{\alpha\beta^{-1}}(x)}.
    \end{align*}

    Case 2: $\gamma=\alpha\gamma'$ with $|\gamma'|>0$. In this instance, the reduced form of $\beta\alpha^{-1}\gamma\delta^{-1}$ is $\beta\gamma'\delta^{-1}$, and $y=\gamma' z$ for some $z\in\osf$. Hence,
    \begin{align*}
        g\in\widehat{\theta}_{\alpha\beta^{-1}}(\xi_x)&\iff\beta\gamma'\delta^{-1}\in\xi_x\iff x\in C(\delta,\beta\gamma')\iff \delta z\in\osf \\
        & \iff \alpha y\in C(\delta,\gamma)\iff g\in\xi_{\tauh_{\alpha\beta^{-1}}(x)}.
    \end{align*}

    Case 3: $\gamma=\alpha$. In this case, the reduced form of $\beta\alpha^{-1}\gamma\delta^{-1}$ is $\beta'(\delta')^{-1}$, where $\beta=\beta'\mu$ and $\delta=\delta'\mu$ for some $\beta',\delta',\mu\in\lang$. Then,
    \begin{align*}
        g\in\widehat{\theta}_{\alpha\beta^{-1}}(\xi_x)& \iff\beta'(\delta')^{-1}\in \xi_x\iff x\in C(\delta',\beta')\iff \delta'\mu y \in \osf  \\ & \iff \alpha y\in C(\delta,\gamma)\iff g\in\xi_{\tauh_{\alpha\beta^{-1}}(x)}.
    \end{align*}

    These cases exhaust all possibilities, completing the proof.
\end{proof}

\begin{proposition}\label{homeomorphism}
    Under the conditions of Theorem~\ref{clouds}, suppose also that $R$ is indecomposable. Then the map $H:\HTCB\to \Omega_{\mathcal{R}}$ given by $H(\mathcal{F})=\{\alpha\beta^{-1}:C(\beta,\alpha)\in \mathcal{F}\}$ is a homeomorphism, where $\HTCB$ is as in Definition~\ref{diachuvoso}. Moreover, $H$ is equivariant with respect to $\widehat{\theta}$ and $\varphih$.
\end{proposition}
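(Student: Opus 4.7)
The plan is to realize $H$ as the spatial dual of a chain of algebra isomorphisms, and then to extend equivariance by density from $\iota(\osf)$.

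First I would combine three isomorphisms: $\Theta_{\mathcal{R}}:A_{\operatorname{par}}(\F,\mathcal{R})\to \Lc(\Omega_{\mathcal{R}},R)$ from Theorem~\ref{grevedosalunos}, the restriction $A_{\operatorname{par}}(\F,\mathcal{R})\to \udalgshift$ of $\Xi$ from Theorem~\ref{clouds}, and the pullback $\iota^{*}:\Lc(\HTCB,R)\to \udalgshift$, $f\mapsto f\circ \iota$. The map $\iota^{*}$ is surjective because $1_{O_A}\circ \iota=1_A$ for every $A\in \TCB$ and $\udalgshift$ is spanned by such indicators, and injective because $\iota(\osf)$ is dense in $\HTCB$ (Proposition~\ref{iotamap}) while locally constant functions to the discrete space $R$ are determined by their values on a dense subset. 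Composing yields an $R$-algebra isomorphism $\phi:\Lc(\HTCB,R)\to \Lc(\Omega_{\mathcal{R}},R)$ with $\phi(1_{O_{C(\beta,\alpha)}})=1_{\alpha\beta^{-1}}$ for all $\alpha,\beta\in \lang$. Both $\HTCB$ and $\Omega_{\mathcal{R}}$ are compact Stone spaces: the former because $\osf=C(\eword,\eword)$ is the top of $\TCB$, the latter because it is closed in the compact space $X_{\F}$.

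Next I would apply Lemma~\ref{bottle} to $\phi$ to obtain a continuous $h:\Omega_{\mathcal{R}}\to \HTCB$ with $\phi(f)=f\circ h$ for all $f\in \Lc(\HTCB,R)$. Since $\phi$ is bijective, $\ker\phi=0$, so the lemma gives that $h$ is a homeomorphism. To identify $h=H^{-1}$, for $\xi\in \Omega_{\mathcal{R}}$ and any generator $C(\beta,\alpha)\in \TCB$ I observe that
\begin{equation*}
C(\beta,\alpha)\in h(\xi)\iff 1_{O_{C(\beta,\alpha)}}(h(\xi))=1 \iff \phi(1_{O_{C(\beta,\alpha)}})(\xi)=1\iff \alpha\beta^{-1}\in \xi.
\end{equation*}
Because $\varepsilon_g\in \mathcal{R}$ forces $1_g=0$ in $\Lc(\Omega_{\mathcal{R}},R)$ for every non-simple $g\in \F$, every $\xi\in \Omega_{\mathcal{R}}$ contains only $e$ and simple elements $\alpha\beta^{-1}$, so $H(h(\xi))=\{\alpha\beta^{-1}:\alpha\beta^{-1}\in \xi\}=\xi$; that is, $H=h^{-1}$ is a homeomorphism.

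For the equivariance, I would first note $H(\iota(x))=\xi_x$ for every $x\in \osf$ by direct comparison of definitions, and that $\iota(C(\alpha,\beta))$ is dense in $V_{\beta\alpha^{-1}}=O_{C(\alpha,\beta)}$ by a standard ultrafilter argument: any basic neighborhood $O_B$ of $\mathcal{F}\in O_{C(\alpha,\beta)}$ satisfies $B\cap C(\alpha,\beta)\in \mathcal{F}$ and is therefore nonempty. For a simple element $g=\alpha\beta^{-1}$ (non-simple $g$ makes $V_{g^{-1}}$ and $\Omega_{g^{-1}}$ empty, so there is nothing to check) and $x\in W_{g^{-1}}$, chaining Proposition~\ref{iotamap} with Lemma~\ref{equivariance} yields
\begin{equation*}
H(\varphih_g(\iota(x)))=H(\iota(\tauh_g(x)))=\xi_{\tauh_g(x)}=\widehat{\theta}_g(\xi_x)=\widehat{\theta}_g(H(\iota(x))),
\end{equation*}
and continuity combined with density extends $H\circ \varphih_g=\widehat{\theta}_g\circ H$ to all of $V_{g^{-1}}$. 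The main obstacle will be this last step: I must verify simultaneously that $H$ maps $V_{g^{-1}}$ bijectively onto $\Omega_{g^{-1}}$ (so that the statement even makes sense), establish the density of $\iota(W_{g^{-1}})$ inside $V_{g^{-1}}$, and thread the partial-action formulas through Lemma~\ref{equivariance} and Proposition~\ref{iotamap}, with a secondary delicate point being the verification that $\iota^{*}$ is an isomorphism.
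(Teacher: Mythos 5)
Your blueprint—dualize a chain of algebra isomorphisms through Lemma~\ref{bottle}, identify the resulting homeomorphism with $H$ by evaluating on the generators $1_{O_{C(\beta,\alpha)}}$, and extend equivariance from the dense set $\iota(\osf)$ via Proposition~\ref{iotamap} and Lemma~\ref{equivariance}—is essentially the paper's, and the second half of your argument (the formula $H(\iota(x))=\xi_x$, the fact that elements of $\Omega_{\mathcal{R}}$ contain only simple elements, the identification $H(V_{g^{-1}})=\Omega_{g^{-1}}$, and the density argument) is sound. The genuine gap is in your very first step: you invoke the isomorphism $\Theta_{\mathcal{R}}:A_{\operatorname{par}}(\F,\mathcal{R})\to\Lc(\Omega_{\mathcal{R}},R)$ of Theorem~\ref{grevedosalunos}, but its hypotheses are not available here. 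The proposition assumes only the conditions of Theorem~\ref{clouds} plus indecomposability of $R$, whereas Theorem~\ref{grevedosalunos} additionally requires that for every $\xi\in X_{\F}\setminus\Omega_{\mathcal{R}}$ there exist $f\in\Theta(\ker\rho)$ with $f(\xi)=1$ (equivalently $\Theta(\ker\rho)=I(\Omega_{\mathcal{R}})$). That condition is automatic only when $R$ is a field (Remark~\ref{starving}); for a general indecomposable ring and a general set of relations it can fail, and then $A_{\operatorname{par}}(G,\mathcal{R})$ is not isomorphic to $\Lc(\Omega_{\mathcal{R}},R)$ at all (see the example following the discussion before Theorem~\ref{grevedosalunos}). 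So your map $\phi:\Lc(\HTCB,R)\to\Lc(\Omega_{\mathcal{R}},R)$ is not known to exist at the point where you use it; verifying the missing condition for the subshift relations is precisely the nontrivial content that a proof of this proposition must supply, and assuming $\Theta_{\mathcal{R}}$ begs that question.

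The repair is the paper's maneuver of working one level up: compose $\Theta^{-1}$ from Proposition~\ref{iso.theta} (which needs only indecomposability), the quotient map $\rho$, the restriction of $\Xi$, and the isomorphism $\udalgshift\cong\Lc(\HTCB,R)$ (your $(\iota^{*})^{-1}$, whose direct justification via density of $\iota(\osf)$ is fine) to obtain a \emph{surjective} unital homomorphism $\Lc(X_{\F},R)\to\Lc(\HTCB,R)$ whose kernel is $\Theta(\ker\rho)$. Applying Lemma~\ref{bottle} to this map, and using Proposition~\ref{knot} to identify its zero set with $\Omega_{\mathcal{R}}$, simultaneously yields the homeomorphism $\HTCB\cong\Omega_{\mathcal{R}}$ and the condition that for every $\xi\in X_{\F}\setminus\Omega_{\mathcal{R}}$ some $f\in\Theta(\ker\rho)$ has $f(\xi)=1$; only then may Theorem~\ref{grevedosalunos} be invoked to produce $\Theta_{\mathcal{R}}$, translate $\Xi$ into a map of function algebras, and pin down the explicit formula for $H$ as you do. With your first step restructured in this way (source $\Lc(X_{\F},R)$ instead of $\Lc(\Omega_{\mathcal{R}},R)$), the remainder of your proposal goes through and coincides with the paper's proof.
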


\begin{proof}
    By \cite[Proposition 5.20]{BCGWSubshift}, there is an isomorphism $\Upsilon:\udalgshift\to \Lc(\HTCB,R)$ such that $\Upsilon(p_A)=1_{O_A}$ for every $A\in\TCB$. Let $\rho:A_{\operatorname{par}}(\F)\to A_{\operatorname{par}}(\F,\mathcal{R})$ be the natural surjective homomorphism as in Section~\ref{greve}, let $\Theta^{-1}:\Lc(X_\F,R)\to A_{\operatorname{par}}(\F)$ as in Proposition~\ref{iso.theta}, let $\Xi$ as in Theorem~\ref{clouds}, and let $\Upsilon$ as above. Composing these maps, we obtain a surjective homomorphism $\phi:\Lc(X_\F,R)\to \Lc(\HTCB,R)$. 
    Note that $\ker(\phi)=\Theta(\ker\rho)$, so we can apply Lemma~\ref{bottle} to conclude that $\Omega_{\mathcal{R}}$ and $\HTCB$ are homeomorphic and that for every $\xi\in X_\F\setminus \Omega_{\mathcal{R}}$, there exists $f\in \Theta(\ker\rho)$ such that $f(\xi)=1$ (observe that the set $F$ in Lemma~\ref{bottle} is equal to $\Omega_{\mathcal{R}}$).
    
    We have already established that $\Omega_{\mathcal{R}}$ and $\HTCB$ are homeomorphic, but we need this homomorphism explicitly to show equivariance. Since Lemma~\ref{bottle} guarantees that the hypotheses of Theorem~\ref{grevedosalunos} are satisfied, it follows that there is an isomorphism $\Theta_{\mathcal{R}}:A_{\operatorname{par}}(\F,\mathcal{R})\to\Lc(\Omega_{\mathcal{R}},R)$. The isomorphism $\Xi:A_{\operatorname{par}}(\F,\mathcal{R})\to \udalgshift$ can then be translated as an isomorphism $\widehat{\Xi}:\Lc(\Omega_{\mathcal{R}},R)\to \Lc(\HTCB,R)$ in such a way that $\widehat{\Xi}(1_{\alpha\beta^{-1}})=1_{O_{C(\beta,\alpha)}}$, for every $\alpha,\beta\in\lang$ such that $\alpha\beta^{-1}$ is in reduced form in $\F$.

    Note that if $g\in F$ is not simple with respect to $\osf$, then for any element $\xi\in\Omega_{\mathcal{R}}$, we have that $g\notin \Omega_{\mathcal{R}}$. Indeed, by the definition of $\mathcal{R}$, we have that $1_g\in\Theta(\mathcal{R})$ so that $1_g(\xi)=0$.

    By \cite[Proposition~2.13]{BCGWSubshift}, there exists a homeomorphism $H:\HTCB\to \Omega_{\mathcal{R}}$ such that $\widehat{\Xi}(f)=f\circ H$ for all $f\in\Lc(\Omega_{\mathcal{R}})$. Given $\mathcal{F}\in\HTCB$ and $\alpha,\beta\in\lang$ such that $\alpha\beta^{-1}$ is in reduced form in $\F$, we have that
    \begin{align*}
        \alpha\beta^{-1}\in H(\mathcal{F}) &\iff 1_{\alpha\beta^{-1}}(H(\mathcal{F}))=1\\
        &\iff \widehat{\Xi}(1_{\alpha\beta^{-1}})=1  \\
        &\iff 1_{O_{C(\beta,\alpha)}}(\mathcal{F})=1 \\
        &\iff C(\beta,\alpha)\in\mathcal{F}\\
        &\iff \mathcal{F}\in O_{C(\beta,\alpha)}.
    \end{align*}
    Hence the map $H$ is given by $H(\mathcal{F})=\{\alpha\beta^{-1}:C(\beta,\alpha)\in \mathcal{F}\}$.

    For $x\in\osf$ and $\mathcal{F}_x$ as in Proposition~\ref{iotamap}, observe that $H(\mathcal{F}_x)=\xi_x$. To prove the equivariance of $H$, it is enough to consider $g=\alpha\beta^{-1}$, where $\alpha,\beta\in\lang$ are such that $\alpha\beta^{-1}$ is in reduced form in $\F$. Suppose that $x\in C(\alpha,\beta)$. By Proposition~\ref{iotamap} and Lemma~\ref{equivariance}, we have that
    \[\widehat{\theta}_{\alpha\beta^{-1}}(H(\mathcal{F}_x))=\widehat{\theta}_{\alpha\beta^{-1}}(\xi_x)=\xi_{\tauh_{\alpha\beta^{-1}(x)}}=H(\mathcal{F}_{\tauh_{\alpha\beta^{-1}(x)}})=H(\varphih_{\alpha\beta^{-1}}(\mathcal{F}_x)).\]
    By Proposition~\ref{iotamap}, the set $\{\mathcal{F}_x:x\in\osf\}$ is dense in $\HTCB$. Since all the domains of the partial action are open and the maps are homeomorphisms, we see that $H$ is equivariant with respect to $\widehat{\theta}$ and $\varphih$.
\end{proof}

\subsection{Simplicity of subshift algebras}

Aiming to describe simplicity of the subshift algebra, we now use Proposition~\ref{homeomorphism} to describe the minimality of the partial action $\varphih$. More precisely, we describe the minimality of $\widehat{\theta}$. For the reader's convenience, we recall below the concepts used in \cite{MishaRuy} for subshifts over finite alphabets.

\begin{definition}\cite[Definitions~13.10 and 13.11]{MishaRuy}
    Given $B\scj\lang$, the \emph{follower set of} $B$ is the set $F_B=\cap_{\beta\in B}F_\beta$. We say that $\osf$ is \emph{collectively cofinal} if for every $B\scj \lang$ such that $F_B\neq\emptyset$ and every $x\in \osf$, there are $\alpha,\gamma\in\lang$ such that $x\in C(\beta\gamma,\alpha)$ for all $\beta\in B$.
\end{definition}

\begin{definition}\cite[Definitions~13.14 and 13.15]{MishaRuy}
    Given $x\in\osf$ and $\beta\in\lang$, the \emph{cost} of reaching $x$ from $\beta$ is
    \[\operatorname{Cost}(\beta,x)=\inf\{|\alpha|+|\gamma|:\alpha,\gamma\in\lang\text{ and }x\in C(\beta\gamma,\alpha)\},\]
    with the convention that $\inf\emptyset=\infty$. We say that $\osf$ is \emph{strongly cofinal} if $\sup_{x\in\osf}\operatorname{Cost}(\beta,x)<\infty$ for all $\beta\in\lang$.
\end{definition}

\begin{definition}\cite[Definition~13.20]{MishaRuy}
    Given $x\in\osf$ and $B$ a finite subset of $\lang$, the \emph{cost} of reaching $x$ from $B$ is
    \[\operatorname{Cost}(B,x)=\inf\{|\alpha|+|\gamma|:\alpha,\gamma\in\lang\text{ and }x\in C(\beta\gamma,\alpha)\text{ for all }\beta\in B\},\]
    with the convention that $\inf\emptyset=\infty$. We say that $\osf$ is \emph{hyper cofinal} if $\sup_{x\in\osf}\operatorname{Cost}(B,x)<\infty$ for all finite $B\scj\lang$ such that $F_B\neq\emptyset$. 
\end{definition}

\begin{proposition}\cite[Proposition~13.21]{MishaRuy}
    Let $\osf$ be a subshift over a finite alphabet. The following are equivalent:
    \begin{enumerate}[(i)]
        \item $\osf$ is both collectively cofinal and strongly cofinal,
        \item $\osf$ is hyper cofinal,
        \item the partial action $\widehat{\theta}$ is minimal.
    \end{enumerate}
\end{proposition}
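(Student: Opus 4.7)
The plan is to transport \cite[Proposition~13.21]{MishaRuy} through the equivariant homeomorphism of Proposition~\ref{homeomorphism}. That result establishes the equivalence of (i), (ii), and the minimality of the partial action $\varphih$ on $\HTCB$. Since $H:\HTCB\to\Omega_{\mathcal{R}}$ is an equivariant homeomorphism intertwining $\varphih$ and $\widehat{\theta}$, and minimality is a topological-dynamical property preserved under equivariant homeomorphisms, the minimality condition transfers verbatim and the claim follows.

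For a self-contained treatment, I would prove (ii)$\Rightarrow$(i), (i)$\Rightarrow$(ii), and (ii)$\Leftrightarrow$(iii) in turn. The implication (ii)$\Rightarrow$(i) is almost tautological: specializing hyper cofinality to singletons $B=\{\beta\}$ gives strong cofinality, and applying hyper cofinality to finite subsets of $B$ while exploiting compactness of $\osf$ (guaranteed by finiteness of the alphabet) yields collective cofinality even for infinite $B$. The converse (i)$\Rightarrow$(ii) is the combinatorial heart of \cite[Proposition~13.21]{MishaRuy}: given finite $B\subseteq\lang$ with $F_B\neq\emptyset$, one assembles the uniform length bounds provided by strong cofinality applied to each $\beta\in B$, using collective cofinality to reconcile the individual witnesses into a common pair $\alpha,\gamma$ furnishing the single uniform cost bound required by hyper cofinality.

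For the equivalence (ii)$\Leftrightarrow$(iii), which is the conceptual content of the proposition, a basis for the topology of $\Omega_{\mathcal{R}}$ is given by finite intersections and complements of the sets $\Omega_g$ for simple $g=\beta\alpha^{-1}\in\F$ (cf.\ Remark~\ref{U_elements}). Minimality of $\widehat{\theta}$, on the compact space $\Omega_{\mathcal{R}}$, is equivalent to the condition that for every nonempty basic open set $U$, finitely many translates $\widehat{\theta}_g(U\cap\Omega_{g^{-1}})$ cover $\Omega_{\mathcal{R}}$. Taking $U=\bigcap_{\beta\in B}\Omega_{\beta\eword^{-1}}$ for finite $B$ with $F_B\neq\emptyset$, and expressing the covering condition through the action $\widehat{\theta}_{\alpha\gamma^{-1}}$ on basic sets, recovers precisely the uniform cost bound $\sup_{x\in\osf}\operatorname{Cost}(B,x)<\infty$.

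The main obstacle is the careful combinatorial bookkeeping underlying (i)$\Rightarrow$(ii): merging finitely many individual cost bounds into a single uniform one is the delicate part of \cite[Proposition~13.21]{MishaRuy}. The topological viewpoint via $\Omega_{\mathcal{R}}$ reorganizes this step but does not circumvent it, which is why in practice the most efficient route is simply to invoke \cite[Proposition~13.21]{MishaRuy} via the equivariance provided by Proposition~\ref{homeomorphism}.
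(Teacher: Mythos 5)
The paper offers no proof of this proposition at all—it is quoted directly from \cite[Proposition~13.21]{MishaRuy}—so your primary route, namely invoking that result and transporting minimality through the equivariant homeomorphism of Proposition~\ref{homeomorphism}, is essentially the paper's own (implicit) justification, and in fact makes explicit the identification of dynamical systems that the bare citation glosses over. Your supplementary self-contained sketch is only an outline (you concede the (i)$\Rightarrow$(ii) bookkeeping is not carried out, and the basic open set encoding $F_B$ should be $\bigcap_{\beta\in B}\Omega_{\beta^{-1}}$, which corresponds to the follower sets $C(\beta,\eword)$, rather than $\bigcap_{\beta\in B}\Omega_{\beta\eword^{-1}}$), but since it is offered only as an optional alternative, the proposal as a whole is in line with the paper.
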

    
Their proof heavily relies on the finitude of the alphabet. We propose two new notions of confinality that are more adapted to work with infinite alphabets.

\begin{definition}
    We say that $\osf$ is \emph{sequentially cofinal} if for every finite subset $B$ of $\lang$ such that $F_B\neq\emptyset$ and every sequence $(x_n)_{n\in\nn}$ of elements in $\osf$, there are $\alpha,\gamma\in\lang$ and a subsequence $(x_{n_k})_{k\in\nn}$ such that $x_{n_k}\in C(\beta\gamma,\alpha)$ for all $k\in\nn$ and all $\beta\in B$.
    We say that $\osf$ is \emph{compactly cofinal} if for every $B$ finite subset of $\lang$ such that $F_B\neq\emptyset$ and every net $(x_i)_{i\in I}$ of elements in $\osf$, there are $\alpha,\gamma\in\lang$ and a subnet $(x_{i_j})_{j\in J}$ such that $x_{i_j}\in C(\beta\gamma,\alpha)$ for all $j\in J$ and all $\beta\in B$.
\end{definition}

Before describing the minimality of $\widehat{\theta}$, we need the following observation and a couple of lemmas. From the definition of partial actions, it follows that
\begin{equation}\label{epica}
    \widehat{\theta}_{(\beta\gamma)^{-1}}(\Omega_{\beta\alpha^{-1}}\cap\Omega_{\beta\gamma})=\Omega_{(\beta\gamma)^{-1}}\cap \Omega_{(\alpha\gamma)^{-1}}
\end{equation}
for all $\alpha,\beta,\gamma\in\lang$.

\begin{lemma}\label{titan}
    For each $x\in\osf$, we have that $\xi_x\in\Omega_{\mathcal{R}}$. Moreover, the set $\{\xi_x\}_{x\in\osf}$ is dense in $\Omega_{\mathcal{R}}$.
\end{lemma}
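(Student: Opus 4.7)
The plan is to verify the two claims separately, using direct computations together with the results already available.

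For the first claim, fix $x \in \osf$. Since $e = \omega\omega^{-1}$ and $x \in C(\omega,\omega) = \osf$, we have $e \in \xi_x$, so $\xi_x \in X_\F$. To establish $\xi_x \in \Omega_{\mathcal{R}}$, pick $h \in \xi_x$ and $f \in \Theta(\mathcal{R})$, and show $f(h^{-1}\xi_x) = 0$. Every $h \in \xi_x$ admits a reduced-form representation $h = \beta\alpha^{-1}$ with $\alpha, \beta \in \lang$ and $x \in C(\alpha,\beta)$: starting from any representation of this shape, cancellation of common suffixes of $\alpha$ and $\beta$ leaves prefixes that remain in $\lang$ and keeps $x$ in the shrunken cylinder. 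Writing $x = \beta y$ with $\alpha y \in \osf$, Lemma~\ref{equivariance} gives
\[ h^{-1}\xi_x \;=\; \widehat{\theta}_{\alpha\beta^{-1}}(\xi_x) \;=\; \xi_{\alpha y}. \]

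Next I would split $f$ according to the two types of generators of $\mathcal{R}$. If $f = 1_g$ for some $g$ that is not simple with respect to $\osf$, then $g \notin \xi_{\alpha y}$ by the very definition of $\xi_{\alpha y}$, and hence $f(\xi_{\alpha y}) = 0$. If $f = \sum_i \lambda_i \prod_j 1_{\beta_{ij}\gamma_{ij}^{-1}}$ comes from a relation $\sum_i \lambda_i \prod_j p_{C(\gamma_{ij},\beta_{ij})} = 0$ in $\ualgshift$, then unfolding definitions yields
\[ f(\xi_{\alpha y}) \;=\; \sum_i \lambda_i \prod_j [\alpha y \in C(\gamma_{ij},\beta_{ij})] \;=\; \Bigl(\sum_i \lambda_i \prod_j 1_{C(\gamma_{ij},\beta_{ij})}\Bigr)(\alpha y), \]
which vanishes because Remark~\ref{pizza} translates the given relation in $\ualgshift$ to the vanishing of the corresponding sum as a function on $\osf$, and $\alpha y \in \osf$.

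For the second claim, I would invoke Proposition~\ref{homeomorphism} (whose hypotheses are in force for this subsection) to obtain the homeomorphism $H \colon \HTCB \to \Omega_{\mathcal{R}}$ satisfying $H(\mathcal{F}_x) = \xi_x$, together with Proposition~\ref{iotamap}, which asserts that $\{\mathcal{F}_x : x \in \osf\}$ is dense in $\HTCB$. Applying the homeomorphism $H$, the set $\{\xi_x : x \in \osf\} = H(\{\mathcal{F}_x : x \in \osf\})$ is dense in $\Omega_{\mathcal{R}}$.

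I expect the main obstacle to be the bookkeeping in the first claim: obtaining a reduced-form representation of $h$ so that Lemma~\ref{equivariance} applies cleanly, and then translating each of the two kinds of generators of $\Theta(\mathcal{R})$ into a pointwise identity on $\osf$ via Remark~\ref{pizza}. Once these reductions are in place, both cases collapse to an immediate evaluation from the definitions.
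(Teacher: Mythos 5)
Your handling of density is the paper's own argument, and in fact the paper disposes of both assertions at once: its proof simply observes that $\xi_x=H(\mathcal{F}_x)$ and cites Proposition~\ref{homeomorphism} together with Proposition~\ref{iotamap}. Since $H$ takes values in $\Omega_{\mathcal{R}}$, the identity $H(\mathcal{F}_x)=\xi_x$ (noted in the proof of Proposition~\ref{homeomorphism}) already yields $\xi_x\in\Omega_{\mathcal{R}}$; so your second paragraph proves the whole lemma, and the separate verification of the first claim is unnecessary.

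That separate verification is, moreover, circular as written: Lemma~\ref{equivariance} carries the hypothesis $\xi_x\in\Omega_{\beta\alpha^{-1}}$, and $\Omega_{\beta\alpha^{-1}}=\{\xi\in\Omega_{\mathcal{R}}:\beta\alpha^{-1}\in\xi\}$ is by definition a subset of $\Omega_{\mathcal{R}}$ (indeed $\widehat{\theta}_{\alpha\beta^{-1}}$ is only defined on $\Omega_{\beta\alpha^{-1}}$), so invoking that lemma to compute $h^{-1}\xi_x$ presupposes exactly the membership you are trying to establish. The underlying set identity $\alpha\beta^{-1}\xi_x=\xi_{\alpha y}$ does hold assuming only $x\in C(\alpha,\beta)$ with $\beta\alpha^{-1}$ reduced — the proof of Lemma~\ref{equivariance} uses nothing more — but then you must rerun that argument rather than cite the lemma as stated. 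Two further slips in the same paragraph: the evaluation $1_{\beta_{ij}\gamma_{ij}^{-1}}(\xi_{\alpha y})=[\alpha y\in C(\gamma_{ij},\beta_{ij})]$ is only correct when $\beta_{ij}\gamma_{ij}^{-1}$ is in reduced form (in general $\beta\gamma^{-1}\in\xi_z$ only says that $z$ lies in the cylinder attached to the reduced form, which can strictly contain $C(\gamma,\beta)$), and the generators $\varepsilon_g$ with $\pi(g)=0$ are not exhausted by non-simple $g$: by Remark~\ref{firework}, $\pi(g)$ also vanishes for simple $g=\alpha\beta^{-1}$ with $C(\alpha,\beta)=\emptyset$, a case you do not treat (it is harmless, since then $C(\beta,\alpha)=\emptyset$ as well, so $g$ lies in no $\xi_z$). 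None of this endangers the lemma, because both claims already follow from the homeomorphism $H$ exactly as in the paper; but your first paragraph should be repaired along these lines or simply deleted.
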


\begin{proof}
    Note that $\xi_x=H(\mathcal{F}_x)$ for each $x\in\osf$. The result then follows from \cite[Proposition~5.17]{BCGWSubshift} and Proposition~\ref{homeomorphism}.
\end{proof}

\begin{lemma}\label{orbit}
    For every non-empty open set $U\scj\Omega_{\mathcal{R}}$, there exist $B\scj\lang$ finite and $g\in G$ such that $\emptyset\neq \bigcap_{\beta\in B}\Omega_{\beta^{-1}}\scj \widehat{\theta}_g (U\cap \Omega_{g^-1})$.
\end{lemma}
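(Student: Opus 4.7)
The plan is to exploit the density of $\{\xi_x\}_{x\in\osf}$ in $\Omega_{\mathcal{R}}$ given by Lemma~\ref{titan}, and to transport the problem to the Stone dual $\HTCB$ via the homeomorphism $H$ of Proposition~\ref{homeomorphism}, where the topology is concretely described by the Boolean algebra $\TCB$. First I would pick $x\in\osf$ with $\xi_x\in U$, and use the basis $\{O_A\}_{A\in\TCB}$ together with Remark~\ref{U_elements} (discarding all but one of the disjuncts) to find a single-term expression
\[
A = \bigcap_{i=1}^{n} C(\alpha_i,\beta_i)\cap\bigcap_{j=1}^{m} C(\mu_j,\nu_j)^c
\]
with $x\in A$ and $H(O_A)\subseteq U$.

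The next step is a normalization. Let $\beta$ be the prefix of $x$ of length at least $\max_i|\beta_i|\vee\max_j|\nu_j|$ and replace $A$ by $A\cap C(\omega,\beta)$. Writing $\beta=\beta_i\gamma_i$ and, when $\nu_j$ is a prefix of $\beta$, $\beta=\nu_j\delta_j$, the elementary identities $C(\alpha_i,\beta_i)\cap C(\omega,\beta)=C(\alpha_i\gamma_i,\beta)$ and $C(\mu_j,\nu_j)\cap C(\omega,\beta)=C(\mu_j\delta_j,\beta)$ (which is empty when $\nu_j$ is not a prefix of $\beta$, making the corresponding complement trivial) produce an intermediate set
\[
A_1 = \bigcap_{i=0}^{n} C(\alpha_i',\beta)\cap\bigcap_{j\in J} C(\mu_j',\beta)^c,
\]
with $\alpha_0'=\omega$, still containing $x$.

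The crucial step is the elimination of the complements. Let $y=\beta^{-1}x\in\osf$. For each $j\in J$, the condition $x\in C(\mu_j',\beta)^c$ translates to $\mu_j' y\notin\osf$. Since $\osf$ is closed, a point is excluded from $\osf$ precisely when it carries a forbidden finite subword; as $\mu_j'\in\lang$ and $y\in\osf$, this subword must straddle the junction between $\mu_j'$ and $y$, so some finite prefix $\gamma_j$ of $y$ satisfies $\mu_j'\gamma_j\notin\lang$. Taking $\gamma$ to be the prefix of $y$ of length $\max_{j\in J}|\gamma_j|$ and $\bar\beta=\beta\gamma$ (still a prefix of $x$), we have $\mu_j'\gamma\notin\lang$ for every $j\in J$, so $C(\mu_j'\gamma,\bar\beta)=\emptyset$. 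Intersecting $A_1$ with $C(\omega,\bar\beta)$ renders every complement vacuous, yielding the purely positive expression $A_2=\bigcap_{i=0}^{n}C(\alpha_i'\gamma,\bar\beta)$.

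Finally, applying $\tauh_{\bar\beta^{-1}}\colon C(\omega,\bar\beta)\to F_{\bar\beta}$ gives $\tauh_{\bar\beta^{-1}}(A_2)=F_{\bar\beta}\cap\bigcap_{i=0}^{n}F_{\alpha_i'\gamma}=F_{B'}$, where $B'=\{\bar\beta,\gamma,\alpha_1'\gamma,\ldots,\alpha_n'\gamma\}$ is a finite subset of $\lang$ containing $\bar\beta^{-1}x$. Setting $g=\bar\beta^{-1}$ and transporting back through $H$ via its equivariance from Proposition~\ref{homeomorphism} together with the Stone-dual identity $\varphih_g(O_C)=O_{\tauh_g(C)}$, we obtain
\[
\emptyset\neq\bigcap_{\beta'\in B'}\Omega_{(\beta')^{-1}}\subseteq\widehat{\theta}_{\bar\beta^{-1}}(U\cap\Omega_{\bar\beta}),
\]
as required. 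The main obstacle is the third step: the use of closedness of $\osf$ to absorb the complement constraints by extending $\beta$ further along $x$. The remaining manipulations are routine bookkeeping with the identities of Remark~\ref{pausa} and the compatibility of $\tauh$ with the Boolean algebra structure of $\TCB$.
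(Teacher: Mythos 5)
Your proof is correct and is essentially the paper's own argument in different coordinates: your two-stage extension of the prefix of $x$ (first $\beta$, long enough to absorb all the $\beta_i,\nu_j$, then $\gamma$, long enough that every $\mu_j'\gamma$ falls out of $\lang$ and voids the complements) is exactly the paper's choice of $g_1$ and $g_2$, and your cylinder-set computation followed by transport through $H$ (using $\varphih_g(O_C)=O_{\tauh_g(C)}$ from the construction of $\varphih$ in the cited subshift paper) corresponds to the paper working directly in $\Omega_{\mathcal{R}}$ via \eqref{epica}. The only point to adjust is the justification of the finite witness $\mu_j'\gamma_j\notin\lang$: it follows from the forbidden-word definition of $\osf=\osf_F$ (the same fact the paper invokes to produce $g_2$), not from closedness of $\osf$, which is neither assumed here nor, over an infinite alphabet, equivalent to that definition.
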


\begin{proof}
    Since $\Omega_g=\emptyset$ if $g$ is not simple with respect to $\osf$, there exists a non-empty $V$ of the form
    \[V=\Omega_{\alpha_1\beta_1^{-1}}\cap\cdots\cap\Omega_{\alpha_n\beta_n^{-1}}\cap \Omega_{\mu_1\nu_1^{-1}}^c\cap\cdots\cap\Omega_{\mu_m\nu_m^{-1}}^c\]
    such that $V\scj U$. By Lemma~\ref{titan}, there exists $x\in\osf$ such that $\xi_x\in V$.
    
    Let $k>\max\{|\alpha_1|,\ldots,|\alpha_n|,|\mu_1|,\ldots,|\mu_m|\}$ and $g_1=x_0\ldots x_{k-1}$. Note that each $\alpha_i$ is a beginning of $g_1$, and for each $\mu_j$, which is not a beginning of $g_1$, the element of the free group $g_1^{-1}\mu_j\nu_j^{-1}$ is not simple with respect to $\osf$. By \eqref{epica}, we have that
    \[\widehat{\theta}_{g_1^{-1}}(\xi_x)\in \widehat{\theta}_{g_1^{-1}}(V \cap \Omega_{g_1})=\Omega_{g_1^{-1}}\cap \Omega_{\gamma_1^{-1}}\cap\cdots\cap\Omega_{\gamma_n^{-1}}\cap\Omega_{\delta_1^{-1}}^c\cap\cdots\cap\Omega_{\delta_l^{-1}}^c,\]
    for some $\gamma_1,\ldots,\gamma_n,\delta_1,\ldots,\delta_l\in\lang$.
    Write $x=g_1y$ and note that for each $j=1,\ldots,l$, $y\notin C(\delta_j,\eword)$. Hence, there exists $k'$ such that $\delta_jy_0\ldots y_{k'-1}\notin\lang$ for all $j\in\{1,\ldots,l\}$. Now, let $g_2=y_0\ldots y_{k'-1}$. Because $C(g_1g_2,\eword)\scj C(g_2,\eword)$, we have that $\Omega_{(g_1g_2)^{-1}}\scj \Omega_{g_2^{-1}}$. Hence
    \[\widehat{\theta}_{g_2^{-1}}(\xi_y)\in \Omega_{(g_1g_2)^{-1}}\cap\Omega_{(\gamma_1g_2)^{-1}}\cap\cdots\cap\Omega_{(\gamma_ng_2)^{-1}}= \widehat{\theta}_{(g_1g_2)^{-1}}(V \cap \Omega_{g_1g_2})\scj \widehat{\theta}_{(g_1g_2)^{-1}}(U \cap \Omega_{g_1g_2}).\]
    Thus, the result holds for $g=(g_1g_2)^{-1}$ and $B=\{g_1g_2,\gamma_1g_2,\ldots,\gamma_ng_2\}$.
\end{proof}

\begin{theorem}\label{minimal}
    The partial action $\widehat{\theta}$ is minimal if and only if $\osf$ is compactly cofinal. Moreover, if the alphabet is countable, then $\widehat{\theta}$ is minimal if and only if $\osf$ is sequentially cofinal.
\end{theorem}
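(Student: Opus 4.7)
The approach is to use Lemma~\ref{orbit} to reduce the minimality of $\widehat{\theta}$ to a statement about orbits meeting the basic sets $\bigcap_{\beta\in B}\Omega_{\beta^{-1}}$, and then translate this condition, via the density of $\{\xi_x : x \in \osf\}$ from Lemma~\ref{titan}, into the combinatorial statement of (compact or sequential) cofinality. Concretely, Lemma~\ref{orbit} implies that $\widehat{\theta}$ is minimal if and only if, for every finite $B \subseteq \lang$ with $\bigcap_{\beta\in B}\Omega_{\beta^{-1}} \neq \emptyset$ and every $\xi \in \Omega_{\mathcal{R}}$, the $\widehat{\theta}$-orbit of $\xi$ meets $\bigcap_{\beta\in B}\Omega_{\beta^{-1}}$. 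Observe that $\beta^{-1} \in \xi_x$ iff $\beta x \in \osf$, so density yields $\bigcap_{\beta} \Omega_{\beta^{-1}} \neq \emptyset \iff F_B \neq \emptyset$. The central computation is: for $\alpha,\gamma \in \lang$ and $x = \alpha y \in \osf$ with $\gamma y \in \osf$, Lemma~\ref{equivariance} gives $\widehat{\theta}_{\gamma\alpha^{-1}}(\xi_x) = \xi_{\gamma y}$, and $\xi_{\gamma y} \in \bigcap_{\beta \in B}\Omega_{\beta^{-1}}$ iff $x \in \bigcap_{\beta \in B} C(\beta\gamma,\alpha)$.

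For the implication ``$\osf$ compactly cofinal $\Rightarrow$ $\widehat{\theta}$ minimal'', fix $\xi \in \Omega_{\mathcal{R}}$ and a finite $B$ with $F_B \neq \emptyset$, and by Lemma~\ref{titan} pick a net $\xi_{x_i} \to \xi$. Applying compact cofinality to $(x_i)$ and $B$ produces a subnet $(x_{i_j})$ and $\alpha, \gamma \in \lang$ with $x_{i_j} \in C(\beta\gamma,\alpha)$ for every $j$ and every $\beta \in B$. Then $\xi_{x_{i_j}} \in \Omega_{\alpha\gamma^{-1}}$ for all $j$, and the computation above yields $\widehat{\theta}_{\gamma\alpha^{-1}}(\xi_{x_{i_j}}) \in \bigcap_{\beta}\Omega_{\beta^{-1}}$. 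Since $\Omega_{\alpha\gamma^{-1}}$ is clopen and $\bigcap_{\beta}\Omega_{\beta^{-1}}$ is closed, passing to the limit gives $\xi \in \Omega_{\alpha\gamma^{-1}}$ and $\widehat{\theta}_{\gamma\alpha^{-1}}(\xi) \in \bigcap_{\beta}\Omega_{\beta^{-1}}$ by continuity of $\widehat{\theta}_{\gamma\alpha^{-1}}$.

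For the converse, assume $\widehat{\theta}$ is minimal and let $B$ be a finite subset of $\lang$ with $F_B \neq \emptyset$ and $(x_i)$ a net in $\osf$. The space $\Omega_{\mathcal{R}}$ is closed in the compact space $X_\F$, hence compact, so some subnet of $(\xi_{x_i})$ converges to a point $\xi \in \Omega_{\mathcal{R}}$. Minimality then supplies some $h \in \F$ with $\xi \in \Omega_{h^{-1}}$ and $\widehat{\theta}_h(\xi) \in \bigcap_{\beta}\Omega_{\beta^{-1}}$. As noted in the proof of Proposition~\ref{homeomorphism}, $\Omega_g$ is empty whenever $g$ is not simple with respect to $\osf$, so $h$ admits a reduced form $h = \gamma\alpha^{-1}$ with $\alpha,\gamma \in \lang$. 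Since both $\Omega_{\alpha\gamma^{-1}}$ and $\bigcap_{\beta}\Omega_{\beta^{-1}}$ are open and the subnet converges to $\xi$, continuity forces eventually $\xi_{x_{i_j}} \in \Omega_{\alpha\gamma^{-1}}$ and $\widehat{\theta}_{\gamma\alpha^{-1}}(\xi_{x_{i_j}}) \in \bigcap_{\beta}\Omega_{\beta^{-1}}$; the first condition gives $x_{i_j} = \alpha y_{i_j}$ with $\gamma y_{i_j} \in \osf$, and the second translates into $x_{i_j} \in C(\beta\gamma,\alpha)$ for every $\beta \in B$. A further passage to a subnet yields the conclusion of compact cofinality.

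For the countable-alphabet version, $\F$ is countable and so $\Omega_{\mathcal{R}} \subseteq X_\F \subseteq \{0,1\}^\F$ is compact metrizable; hence nets may be replaced by sequences in both directions, with sequential compactness yielding convergent subsequences in the ``only if'' direction and metrizability providing convergent sequences realizing density in the ``if'' direction. The main obstacle is managing the passage from the combinatorial data of the approximating points $\xi_{x_i}$ to the limit $\xi$, which is not itself of the form $\xi_x$; this is handled cleanly by the clopenness of $\Omega_g$ for simple $g$ together with continuity of the maps $\widehat{\theta}_g$.
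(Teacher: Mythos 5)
Your proof is correct and follows essentially the same route as the paper's: both directions rest on compactness of the Stone space and density of the points coming from $\osf$ (Lemma~\ref{titan}), with minimality reduced to orbits meeting the basic open sets $\bigcap_{\beta\in B}\Omega_{\beta^{-1}}$ via Lemma~\ref{orbit}, and the countable case handled by metrizability; the only difference is that you run the forward direction inside $\Omega_{\mathcal{R}}$ with $\widehat{\theta}$ and the points $\xi_x$, where the paper works in $\HTCB$ with $\varphih$ and the $\mathcal{F}_x$, which is equivalent by Proposition~\ref{homeomorphism}. One cosmetic caveat: in the converse direction the words $\alpha,\gamma$ produced by compact cofinality need not make $\gamma\alpha^{-1}$ reduced, so to invoke Lemma~\ref{equivariance} literally you should first cancel the common suffix (harmless, since $\Omega_g$ and $\widehat{\theta}_g$ depend only on the group element, or argue directly from $\widehat{\theta}_g(\xi)=g\xi$ as the paper does).
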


\begin{proof}
    ($\Rightarrow$) Suppose $\widehat{\theta}$ is minimal. Let $B\scj\lang$ be finite such that $F_B\neq\emptyset$ and let $(x_i)_{i\in I}$ a net in $\osf$. Since $\HTCB$ is compact, there exists a subnet $(x_{i_j})_{j\in J}$ such that $\mathcal{F}_{x_{i_j}}$ converges to some $\mathcal{F}\in\HTCB$. By minimality, there are $\alpha,\gamma\in\lang$ such that $C(\gamma,\alpha)\in\mathcal{F}$ and $\varphih_{\gamma\alpha^{-1}}(\mathcal{F})\in O_{F_B}$. This implies that $C(\beta\gamma,\alpha)\in\mathcal{F}$ for all $\beta\in B$. Since $O_A$ is open for every $A\in\TCB$, there exists $j_0\in J$ such that $x_{i_j}\in C(\beta\gamma,\alpha)$ for every $j\geq j_0$ and every $\beta\in B$. Considering $J' = \{j\in J: j\geq j_0\}$, $(x_{i_j})_{j\in J'}$ is a subnet of $(x_i)_{i\in I}$ such that $x_{i_j}\in C(\beta\gamma,\alpha)$ for every $j\in J'$, showing that $\osf$ is compactly cofinal.

    ($\Leftarrow$) Suppose $\osf$ is compactly cofinal. By Lemma~\ref{orbit}, it is enough to prove that for each finite $B\scj\lang$ such that $U:=\bigcap_{\beta\in B} \Omega_{\beta^{-1}}$ is non-empty, there exists $\xi\in\Omega_{\mathcal{R}}$ such that $\Orb(\xi)\cap U\neq \emptyset$. It then enough to find $\alpha,\gamma\in\lang$ such that $\alpha\gamma^{-1}\in \xi$ and $\alpha\gamma^{-1}\beta^{-1}\in \xi$ for all $\beta\in B$. Since $\Omega_{\mathcal{R}}$ is compact and, by Lemma~\ref{titan}, $\{\xi_x : x\in\osf\}$ is dense in $\Omega_{\mathcal{R}}$, there exists a net $(x_i)_{i\in I}$ in $\osf$ such that $\xi_{x_i}$ converges to $\xi$. By hypothesis, there are $\alpha,\gamma\in\lang$ and a subnet $(x_{i_j})_{j\in J}$ such that $x_{i_j}\in C(\beta\gamma,\alpha)$ for all $j\in J$ and all $\beta\in B$. In particular, we also have that $x_{i_j}\in C(\gamma,\alpha)$ for all $j\in J$. Since for each $g\in \F$, we have that $\Omega_g$ is closed in $\Omega_{\mathcal{R}}$, we have that $\alpha\gamma^{-1}\in \xi$ and $\alpha\gamma^{-1}\beta^{-1}\in \xi$ for all $\beta\in B$, as required.

    If the alphabet is countable, then $\HTCB$ is a second countable compact Hausdorff space, and so it is metrizable. In this case, we can adapt the proof by replacing nets with sequences and subnets with subsequences.
\end{proof}

We can now characterize when the subshift algebra $\ualgshift$ is simple. For that, we need to recall a definition.

\begin{definition}\cite[Definition~6.2]{BCGWCStarSubshift}
    We say that a subshift $\osf$ satisfies condition (L) if for every finite set $B\scj\lang$, and every $\gamma\in\lang$ such that $\gamma^{\infty}\in F_B$, there exists an element in $F_B$ other than $\gamma^{\infty}$.
\end{definition}

We then obtain the following.

\begin{theorem}\label{simple}
   Let $R$ be a field, and $\osf$ a subshift over an arbitrary alphabet. The algebra $\ualgshift$ is simple if and only if $\osf$ satisfies condition (L) and is compactly cofinal.
\end{theorem}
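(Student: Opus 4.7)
The plan is to combine the realization of $\ualgshift$ as a partial skew group ring of a topological partial action, established in the previous subsections, with a standard simplicity criterion for partial skew group rings over a field, and then translate each dynamical hypothesis into the subshift language already identified in the paper.

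By Theorem~\ref{clouds}, Theorem~\ref{grevedosalunos}(iv) and Proposition~\ref{homeomorphism}, the algebra $\ualgshift$ is isomorphic to $\Lc(\Omega_{\mathcal{R}},R)\rtimes_\theta \F$, or equivalently (via Corollary~\ref{SteinbergAgain}) to the Steinberg algebra of the transformation groupoid $\F\ltimes_{\widehat{\theta}}\Omega_{\mathcal{R}}$. For such algebras over a field, there is a well-known simplicity criterion: the algebra is simple if and only if the partial action $\widehat{\theta}$ is minimal and topologically free (equivalently, the transformation groupoid is minimal and effective). I would invoke this criterion as the logical backbone of the proof; the remaining work is to read off each of the two hypotheses.

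For minimality, Theorem~\ref{minimal} already gives that $\widehat{\theta}$ is minimal if and only if $\osf$ is compactly cofinal, so no extra work is needed. For topological freeness, I would use the equivariant homeomorphism $H:\HTCB\to\Omega_{\mathcal{R}}$ of Proposition~\ref{homeomorphism} to transfer topological freeness of $\widehat{\theta}$ to topological freeness of $\varphih$ on $\HTCB$. The characterization of topological freeness of $\varphih$ in terms of condition (L) is then obtained from \cite{BCGWSubshift}, where this equivalence is proved for the subshift partial action used there. Putting these three equivalences together yields precisely the statement: $\ualgshift$ is simple $\Leftrightarrow$ $\widehat{\theta}$ is minimal and topologically free $\Leftrightarrow$ $\osf$ is compactly cofinal and satisfies condition (L).

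The main obstacle is bookkeeping rather than content: one must (i) cite the exact form of the simplicity criterion for partial skew group rings (or Steinberg algebras of ample groupoids) over a field on a Stone space, applicable to a non-free partial action of a countable group and allowing for a non-Hausdorff transformation groupoid if that occurs, and (ii) verify that the notion of topological freeness used in the cited simplicity theorem agrees with the one established for $\varphih$ in \cite{BCGWSubshift}, so that the translation through $H$ is immediate. Once these references are pinned down, no further calculation is required.
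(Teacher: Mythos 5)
Your proposal follows essentially the same route as the paper: realize $\ualgshift$ as the Steinberg algebra of the transformation groupoid $\F\ltimes_{\widehat{\theta}}\Omega_{\mathcal{R}}$ (via Theorem~\ref{clouds} and Corollary~\ref{SteinbergAgain}), invoke the simplicity criterion for such algebras over a field (the paper cites \cite[Theorem~4.1]{SimplicityGroupoid}), and then translate minimality via Theorem~\ref{minimal} and effectiveness/topological freeness via the prior subshift results (the paper cites \cite[Theorem~6.5]{BCGWCStarSubshift}, with the transfer through Proposition~\ref{homeomorphism} implicit). The only differences are bookkeeping of references, exactly as you anticipated.
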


\begin{proof}
    By Theorem~\ref{clouds}, we have an isomorphism between $R_{\operatorname{par}}(\F,\mathcal{R})$ and $\ualgshift$. By Corollary~\ref{SteinbergAgain}, $\ualgshift$ is isomorphic to the Steinberg algebra of the transformation group\-oid $\F\ltimes_{\widehat{\theta}}\Omega_{\mathcal{R}}$. Now, we can use \cite[Theorem~4.1]{SimplicityGroupoid} to characterize the simplicity of $\ualgshift$. The condition about the groupoid being effective is given by \cite[Theorem~6.5]{BCGWCStarSubshift} and the condition about minimality is given by Theorem~\ref{minimal}.
\end{proof}

\begin{remark}
In Theorem~\ref{simple}, if the alphabet is countable, compactly cofinal can be replaced with sequentially cofinal. If the alphabet is finite, compactly cofinal can instead be replaced with hyper cofinal.  Furthermore, Theorem~\ref{simple} can also be deduced from \cite[Theorem 5.22]{BGMJ}, which additionally offers a characterization of the relationship between the primeness of a partial skew groupoid ring and the topological transitivity of the action.
\end{remark}

For a subshift $\osf$, let $\ucalgshift$ denote the unital subshift C*-algebra defined in \cite{BCGWCStarSubshift}.
\begin{theorem}\label{simplecstar}
    Let $\osf$ be a subshift over a countable alphabet $\alf$. The C*-algebra $\ucalgshift$ is simple if and only if $\osf$ satisfies condition (L) and is sequentially cofinal.
\end{theorem}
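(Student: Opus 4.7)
The plan is to mirror the proof of Theorem~\ref{simple}, replacing the Steinberg algebra with the groupoid C*-algebra and invoking a C*-algebraic simplicity criterion in place of \cite[Theorem~4.1]{SimplicityGroupoid}. First, I would appeal to \cite{BCGWCStarSubshift} to realize $\ucalgshift$ as the (full) groupoid C*-algebra of the transformation groupoid $\F \ltimes_{\varphih} \HTCB$ associated with the topological partial action $\varphih$ on $\HTCB$ recalled just before Proposition~\ref{iotamap}. Since the alphabet $\alf$ is countable, the Boolean algebra $\TCB$ is countable, so its Stone dual $\HTCB$ is a second-countable compact Hausdorff space, and hence $\F \ltimes_{\varphih} \HTCB$ is an étale, Hausdorff, second-countable groupoid.

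Next, I would invoke the standard simplicity criterion for such groupoid C*-algebras (for instance, Renault's theorem in the topologically principal case, or the version in Brown--Clark--Farthing--Sims): under amenability of the groupoid, $C^*(\F \ltimes_{\varphih} \HTCB)$ is simple if and only if the groupoid is both effective and minimal. Effectiveness of $\F \ltimes_{\varphih} \HTCB$ is precisely equivalent to condition (L) by \cite[Theorem~6.5]{BCGWCStarSubshift}. For minimality, the homeomorphism $H$ of Proposition~\ref{homeomorphism} conjugates $\widehat{\theta}$ to $\varphih$, so $\varphih$ is minimal on $\HTCB$ if and only if $\widehat{\theta}$ is minimal on $\Omega_{\mathcal{R}}$; by Theorem~\ref{minimal}, and using that the countability of the alphabet makes $\HTCB$ (and hence $\Omega_{\mathcal{R}}$) metrizable so that the sequential formulation applies, this is in turn equivalent to $\osf$ being sequentially cofinal. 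Combining these three equivalences yields the statement.

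The main obstacle I anticipate is verifying the amenability hypothesis required by the simplicity criterion, namely that the transformation groupoid $\F \ltimes_{\varphih} \HTCB$ is amenable, so that the full and reduced C*-algebras coincide and topological principality characterizes simplicity. I expect this to be available from \cite{BCGWCStarSubshift}, where $\ucalgshift$ is analyzed as a partial crossed product and its groupoid model is shown to satisfy the relevant amenability property; if not stated explicitly in that form, it can be obtained via a standard argument using the semi-saturated, orthogonal structure of $\varphih$ together with the fact that the isotropy groups arising in the free-group partial action on the zero-dimensional space $\HTCB$ are amenable.
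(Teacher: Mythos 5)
Your outline coincides with the paper's proof in all the main steps: realize $\ucalgshift$ as the groupoid C*-algebra of $\F\ltimes_{\varphih}\HTCB$ via \cite[Theorem~6.1]{BCGWCStarSubshift}, use an effective-plus-minimal simplicity criterion for amenable second-countable \'etale groupoids, get effectiveness from \cite[Theorem~6.5]{BCGWCStarSubshift} (condition (L)), and get minimality by transporting $\varphih$ to $\widehat{\theta}$ through the homeomorphism $H$ of Proposition~\ref{homeomorphism} and applying Theorem~\ref{minimal}, with countability of the alphabet giving metrizability so that the sequential formulation applies. This is exactly the structure of the paper's argument, which invokes \cite[Theorem~5.1]{SimplicityGroupoid} as the simplicity criterion.

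The one place where you diverge is the amenability verification, which you correctly flag as the main obstacle but do not actually close. The paper settles it by citing \cite[Theorem~6.4]{BCGWSubshift}, which identifies $\F\ltimes_{\varphih}\HTCB$ with a Deaconu--Renault groupoid; countability of the alphabet makes this groupoid second countable, and then \cite[Proposition~2.4]{Renault00} gives amenability. Be careful with your proposed fallback: the assertion that amenability of the isotropy groups of the transformation groupoid yields amenability of the groupoid is not a valid principle (there are principal, hence trivially-isotropy, \'etale groupoids coming from free actions of free groups that are not amenable), so that part of your argument would not stand on its own. Your other hint, exploiting the semi-saturated and orthogonal structure of the partial action of the free group, does point toward a legitimate alternative (Exel's approximation property for Fell bundles over free groups), but it would need to be carried out explicitly; as written, the amenability step remains a gap that the paper fills by a genuinely different route.
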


\begin{proof}
    By \cite[Theorem 6.1]{BCGWCStarSubshift}, $\ucalgshift$ is isomorphic to the groupoid C*-algebra of the groupoid $\F\ltimes_{\varphih}\HTCB$. By \cite[Theorem~6.4]{BCGWSubshift}, the groupoid $\F\ltimes_{\varphih}\HTCB$ is isomorphic to a Deaconu-Renault groupoid. Since the alphabet is countable, this groupoid is second countable, so we can apply \cite[Proposition~2.4]{Renault00} to conclude that $\F\ltimes_{\varphih}\HTCB$ is amenable. By Proposition~\ref{homeomorphism}, the groupoids $\F\ltimes_{\varphih}\HTCB$ and $\F\ltimes_{\widehat{\theta}}\Omega_{\mathcal{R}}$ are isomorphic. Now, as in the proof of Theorem~\ref{simple}, we can use \cite[Theorem~5.1]{SimplicityGroupoid} to characterize the simplicity of $\ucalgshift$.
\end{proof}

\begin{remark}
    For a subshift over a finite alphabet, we can replace sequentially cofinal with hyper cofinal in Theorem~\ref{simplecstar} (cf. \cite[Theorem~14.5]{MishaRuy} and \cite[Theorem~7.13]{GillesDanie}).
\end{remark}

%
%
%
%
%

\bibliographystyle{abbrv}
\bibliography{BCGWPartialGroupAlgebras}

\begin{thebibliography}{10}

\bibitem{MR4721165}
D.~Bagio, C.~Gil~Canto, D.~Gon\c{c}alves, and D.~Royer.
\newblock The reduction theorem for algebras of one-sided subshifts over
  arbitrary alphabets.
\newblock {\em Rev. R. Acad. Cienc. Exactas F\'is. Nat. Ser. A Mat. RACSAM},
  118(2):Paper No. 72, 21, 2024.

\bibitem{BGMJ}
D.~Bagio, D.~Gon{\c c}alves, P.~S.~E. Moreira, and J.~\"Oinert.
\newblock The ideal structure of partial skew groupoid rings with applications
  to topological dynamics and ultragraph algebras.
\newblock {\em Forum Math.}, 36(4):1081--1117, 2024.

\bibitem{BeuterIdeal}
V.~Beuter, D.~Gonçalves, J.~Öinert, and D.~Royer.
\newblock Simplicity of skew inverse semigroup rings with applications to
  steinberg algebras and topological dynamics.
\newblock {\em Forum Math.}, 31(3):543--562, 2019.

\bibitem{MR3743184}
V.~M. Beuter and D.~Gon\c{c}alves.
\newblock The interplay between {S}teinberg algebras and skew rings.
\newblock {\em J. Algebra}, 497:337--362, 2018.

\bibitem{BoavaCastro}
G.~Boava and G.~G. de~Castro.
\newblock Infinite sum relations on universal
  {{\(\mathrm{C}^{\ast}\)}}-algebras.
\newblock {\em J. Math. Anal. Appl.}, 507(2):16, 2022.
\newblock Id/No 125850.

\bibitem{BCGWCStarSubshift}
G.~Boava, G.~G. de~Castro, D.~Gon{\c{c}}alves, and D.~W. van Wyk.
\newblock C*-{Algebras} of one-sided subshifts over arbitrary alphabets.
\newblock Preprint, {arXiv}:2312.17644 [math.{OA}] (2023), 2023.

\bibitem{BCGWSubshift}
G.~Boava, G.~G. de~Castro, D.~Gon{\c{c}}alves, and D.~W. van Wyk.
\newblock Algebras of one-sided subshifts over arbitrary alphabets.
\newblock {\em Rev. Mat. Iberoam.}, 40(3):1045--1088, 2024.

\bibitem{BoavaExel}
G.~Boava and R.~Exel.
\newblock Partial crossed product description of the {$C^*$}-algebras
  associated with integral domains.
\newblock {\em Proc. Amer. Math. Soc.}, 141(7):2439--2451, 2013.

\bibitem{BrixCarlsen}
K.~A. Brix and T.~M. Carlsen.
\newblock {$\rm C^*$}-algebras, groupoids and covers of shift spaces.
\newblock {\em Trans. Amer. Math. Soc. Ser. B}, 7:134--185, 2020.

\bibitem{SimplicityGroupoid}
J.~Brown, L.~O. Clark, C.~Farthing, and A.~Sims.
\newblock Simplicity of algebras associated to \'{e}tale groupoids.
\newblock {\em Semigroup Forum}, 88(2):433--452, 2014.

\bibitem{CarlsenSubshift}
T.~M. Carlsen.
\newblock Cuntz--{Pimsner} {{\(C^*\)}}-algebras associated with subshifts.
\newblock {\em Int. J. Math.}, 19(1):47--70, 2008.

\bibitem{COS}
T.~M. Carlsen, A.~Dor-On, and S.~Eilers.
\newblock Shift equivalences through the lens of {Cuntz}-{Krieger} algebras.
\newblock {\em Anal. PDE}, 17(1):345--377, 2024.

\bibitem{CGGH}
L.~G. Cordeiro, E.~Gillaspy, D.~Gonçalves, and R.~Hazrat.
\newblock Williams’ conjecture holds for meteor graphs.
\newblock {\em to appear at Israel Journal of Mathematics}, arXiv:2411.01741
  [math.RA].

\bibitem{MR4808570}
G.~Corti\~nas and R.~Hazrat.
\newblock Classification conjectures for {L}eavitt path algebras.
\newblock {\em Bull. Lond. Math. Soc.}, 56(10):3011--3060, 2024.

\bibitem{GillesDanie}
G.~G. de~Castro and D.~W. van Wyk.
\newblock Labelled space {$C^*$}-algebras as partial crossed products and a
  simplicity characterization.
\newblock {\em J. Math. Anal. Appl.}, 491(1):124290, 35, 2020.

\bibitem{DokEx05}
M.~Dokuchaev and R.~Exel.
\newblock Associativity of crossed products by partial actions, enveloping
  actions and partial representations.
\newblock {\em Trans. Amer. Math. Soc.}, 357(5):1931--1952, 2005.

\bibitem{MishaRuy}
M.~Dokuchaev and R.~Exel.
\newblock Partial actions and subshifts.
\newblock {\em J. Funct. Anal.}, 272(12):5038--5106, 2017.

\bibitem{MEP}
M.~Dokuchaev, R.~Exel, and P.~Piccione.
\newblock Partial representations and partial group algebras.
\newblock {\em J. Algebra}, 226(1):505--532, 2000.

\bibitem{ExelBook}
R.~Exel.
\newblock {\em Partial dynamical systems, {F}ell bundles and applications},
  volume 224 of {\em Mathematical Surveys and Monographs}.
\newblock American Mathematical Society, Providence, RI, 2017.

\bibitem{ExelLacaQuigg}
R.~Exel, M.~Laca, and J.~Quigg.
\newblock Partial dynamical systems and {$C^*$}-algebras generated by partial
  isometries.
\newblock {\em J. Operator Theory}, 47(1):169--186, 2002.

\bibitem{DR19}
D.~Gon{\c{c}}alves and D.~Royer.
\newblock Irreducible representations of one-sided subshift algebras.
\newblock Preprint, {arXiv}: arXiv:2306.16179 [math.RA] (2023), 2023.

\bibitem{Gonçalves_Royer_2024}
D.~Gonçalves and D.~Royer.
\newblock The socle of subshift algebras, with applications to subshift
  conjugacy.
\newblock {\em Proceedings of the Royal Society of Edinburgh: Section A
  Mathematics}, page 1–26, 2024.

\bibitem{MR3818287}
R.~Hazrat and H.~Li.
\newblock Graded {S}teinberg algebras and partial actions.
\newblock {\em J. Pure Appl. Algebra}, 222(12):3946--3967, 2018.

\bibitem{Keimel}
K.~Keimel.
\newblock Alg\`ebres commutatives engendr\'{e}es par leurs \'{e}l\'{e}ments
  idempotents.
\newblock {\em Canadian J. Math.}, 22:1071--1078, 1970.

\bibitem{LawsonStoneDuality}
M.~V. Lawson.
\newblock Non-commutative {Stone} duality.
\newblock In {\em Semigroups, algebras and operator theory. ICSAOT 2022.
  Selected papers based on the presentations at the international conference,
  CUSAT, Kerala, India, March 28--31, 2022}, pages 11--66. Singapore: Springer,
  2023.

\bibitem{Renault00}
J.~Renault.
\newblock Cuntz-like algebras.
\newblock In {\em Operator theoretical methods ({T}imi\c soara, 1998)}, pages
  371--386. Theta Found., Bucharest, 2000.

\bibitem{Royer}
D.~Royer.
\newblock Partial group algebra with projections and relations.
\newblock {\em Rocky Mountain J. Math.}, 49(2):645--660, 2019.

\end{thebibliography}

\end{document}